\newtheorem{thm}{Theorem}[section]
 \newtheorem{cor}[thm]{Corollary}
 \newtheorem{lem}[thm]{Lemma}
 \newtheorem{prop}[thm]{Proposition}
\numberwithin{equation}{section}
 \theoremstyle{definition}
  \newtheorem{defn}[thm]{Definition}
  \newtheorem{question}[thm]{Question}
 \theoremstyle{remark}
 \newtheorem{rem}[thm]{Remark}
  \newtheorem{ex}[thm]{Example}
\newtheorem*{claim*}{Claim}
\def\B{\mathfrak B}
\def\H{\mathcal H}
\def\G{\mathcal G}
\def\supp{\mathrm{supp}}
\def\N{\mathbb N}
\def\C{\mathbb C}
\def\R{\mathbb R}
\def\sgn{\mathrm{sgn}}
\def\ch{\mathrm{ch}}
\def\val{\mathrm{val}}
\def\supp{\mathrm{supp}}
\begin{document}

\title{Measured expanders}

\author{Kang Li, J\'{a}n \v{S}pakula, and Jiawen Zhang}

\address[K. Li]{Department of Mathematics, Friedrich-Alexander-University Erlangen–Nuremberg, Cauerstrasse 11, 91058 Erlangen, Germany.}
\email{kang.li@fau.de}

\address[J. \v{S}pakula]{School of Mathematics, University of Southampton, Highfield, SO17 1BJ, United Kingdom.}
\email{jan.spakula@soton.ac.uk}

\address[J. Zhang]{School of Mathematical Sciences, Fudan University, 220 Handan Road, Shanghai, 200433, China.}
\email{jiawenzhang@fudan.edu.cn}

\date{}

\thanks{KL was supported by the Internal KU Leuven BOF project C14/19/088. KL has also received funding from the European Research Council (ERC) under the European Union's Horizon 2020 research and innovation programme (grant agreement no. 677120-INDEX)}
\thanks{J\v{S} was partially supported by Marie Curie FP7-PEOPLE-2013-CIG Coarse Analysis (631945), and EPSRC Standard Grant EP/V002899/1.}
 
\thanks{JZ was supported by NSFC11871342}

\begin{abstract}
By measured graphs we mean graphs endowed with a measure on the set of vertices. In this context, we explore the relations between the appropriate Cheeger constant and Poincar\'{e} inequalities. We prove that the so-called \emph{Cheeger inequality} holds in two cases: when the measure comes from a random walk, or when the measure has a bounded measure ratio. Moreover, we also prove that our measured (asymptotic) expanders are generalised expanders introduced by Tessera. Finally, we present some examples to demonstrate relations and differences between classical expander graphs and the measured ones. 
The current paper is motivated primarily by our previous work on the rigidity problem for Roe algebras. 
\end{abstract}

\date{\today}
\maketitle

\parskip 4pt

\noindent\textit{Mathematics Subject Classification} (2020): 05C48, 05C81, 05C50, 51F30.\\
\textit{Keywords:} Measured expanders; Cheeger constant; Poincar\'{e} inequality; Random walks; Generalised expanders.

\section{Introduction}
Motivated by our analytic work in  \cite{measured_I,dypartI}, we in this paper explore graph expansion in the context of measured graphs (\emph{i.e.}, their vertex sets are endowed with a measure or weights). There are three essential ways of quantifying expansion in a graph: the Cheeger constant, the spectral gap of the associated graph Laplacian, and the optimal constant in the Poincar\'{e} inequality. This paper aims to address their relations to each other in the measured setting. Most importantly, we have generalised the so-called \emph{Cheeger inequality} to \emph{measured} graphs (see \cite{alon1986eigenvalues,alon1985lambda1,dodziuk1984difference,tanner1984explicit} for the classical Cheeger inequality).
  
Although the notion of graph Laplacian is extremely well understood and established in the unweighted setting, in the setting with weights on the vertex sets there are at least two possibilities to define the graph Laplacian:

The most straightforward one is perhaps as follows: if $m$ is a measure on the vertex set $V$ of a graph $\G$, one can define a unitary isomorphism $U_m:\ell^2(V,m)\to\ell^2(V)$, and declare that the graph Laplacian on $(V,m)$ is $U_m^*\Delta U_m$, where $\Delta$ is the standard graph Laplacian on $\G$. However, this approach does not yield the type of Poincar\'{e} inequality that is required for the results in \cite{measured_I}. 

There is another approach to Laplacians on graphs in the case when the associated measure on the vertex set happens to be a stationary measure of a random walk on the graph, or in other words, there exists a `compatible' measure on the set of edges (see Section \ref{ssec:me from random walk} for details). Although this case already appeared in the literature (see \emph{e.g.} \cite{woess2000random}),  we could not find the exact arguments and precise statements needed for \cite{measured_I}. 
Therefore, we present elementary proofs for the desired Cheeger inequality (Theorem \ref{thm:Cheeger iff Poincare}) and $L^p$-Poincar\'{e} inequality (Proposition \ref{prop: lp Poincare}).

While the random walk approach is undoubtedly elegant, it happens that not every measure on a graph is a stationary measure of some random walk. Instead we sidestep the issue of defining a graph Laplacian directly, and we simply ask the spectral gap to be the best constant for which a Poincar\'{e} inequality holds (cf. Definition \ref{defn:spectral-gap-through-Poincare}). To relate the Cheeger constant and our spectral gap, we require that there is a multiplicative bound on how much the measure can change across an edge (we call it \emph{bounded measure ratio} in Definition \ref{defn:bounded-measure-ratio}). Under this extra assumption, we are able to obtain the Cheeger inequality in Theorem \ref{thm:measured expanders}. 


In Section \ref{Generalised expanders}, we show in Theorem~\ref{thm:generalised expanders from random walks} that our measured expanders (more generally measured asymptotic expanders studied in \cite{measured_I,dypartI}) are $L^p$-generalised expanders introduced by Tessera in \cite{MR2649350} for each $p\in [1,\infty)$. This particularly strengthens a result proved in \cite{measured_I} that measured (asymptotic) expanders do not coarsely embed into any $L^p$-space (see Corollary~\ref{cor:non CE}). In the final section, we present several examples in order to clarify the relation and difference between expander graphs and measured expander graphs (see \emph{e.g.} Corollary~\ref{exm1} and Proposition~\ref{exm2}).

We close the introduction by mentioning that measured (asymptotic) expanders arise naturally from ergodic theory. More precisely, we can always associate a sequence of measured approximating graphs to any measurable action on a probability space. The associated sequence of measured approximating graphs is forms a sequence of measured expanders if and only if the measurable action has a spectral gap. In parallel, the associated sequence of measured approximating graphs forms a sequence of measured asymptotic expanders if and only if the measurable action is strongly ergodic. As a large class of measurable actions which have a spectral gap or are strongly ergodic, one can naturally construct numerous examples of measured (asymptotic) expanders (we refer the reader to \cite[Section 6]{dypartI} for details).


\subsection*{Acknowledgement} We would like to thank G\'{a}bor Elek for pointing out Proposition \ref{prop:example 1} and Lemma \ref{lem:example 2} to us.

\subsection*{Convention}Throughout the paper, all metric spaces are \emph{non-empty and discrete}, and without further explanation a graph refers to a \emph{connected undirected} graph.

\section{Preliminaries}\label{sec:prel}

Let $\G=(V,E)$ be a (connected) graph with vertex set $V$ and edge set $E$. We endow $V$ with the \emph{edge-path} metric $d$, defined to be the length (number of edges) in a shortest path connecting given two points. For $A\subseteq V$, the \emph{vertex boundary} $\partial^V A$ of $A$ is defined to be $\{v\in V: d(v,A)=1\}$, and its \emph{edge boundary} $\partial^E A$ is defined to be the set of all edges in $E$ with one endpoint in $A$ and the other one in $V \setminus A$.

We say that two vertices $v,w \in V$ are \emph{adjacent} if there is an edge in $E$ connecting them, denoted by $v \thicksim_E w$ or just $v \thicksim w$. For a vertex $v\in V$, its \emph{valency} $\val(v)$ is defined to be the number of vertices adjacent to $v$. We say that a graph $\G=(V,E)$ is \emph{$k$-regular} for some $k\in \N$ if $\val(v)=k$ for each $v\in V$, and $\G$ has \emph{bounded valency} if there exists some $K \geq 0$ such that each vertex has valency at most $K$. A sequence of graphs $\{\G_n\}_{n\in \N}$ is said to have \emph{uniformly bounded valency} if there exists some $K \geq 0$ such that each vertex in $\G_n$ has valency bounded by $K$ for all $n$.

For a finite graph $(V,E)$, its \emph{Cheeger constant} is defined to be
\[
\ch(V,E):=\min\big\{\frac{|\partial^V A|}{|A|}: A\subseteq V,\ 0<|A|\leq \frac{|V|}{2}\big\}.
\]
We note that the Cheeger constant $\ch(V,E)$ is positive if and only if $(V,E)$ is a connected graph. 

\begin{defn}\label{defn:expanders}
A sequence of finite (connected)\footnote{We note that the expanding condition already implies that all the graphs $(V_n,E_n)$ in an expander sequence are connected.} graphs $\{(V_n,E_n)\}_{n\in \N}$ is called a sequence of \emph{expanders} if they have uniformly bounded valency, $|V_n| \to \infty$, and $\inf_{n\in \N} \ch(V_n,E_n)>0$.
\end{defn}

In this paper, we study a measured version of expanders as introduced in \cite{measured_I}. Recall that a \emph{finite measured graph} $(V,E,m)$ is a finite (connected and undirected) graph $(V,E)$ equipped with a non-trivial finite measure $m$ defined on the $\sigma$-algebra of all subsets of $V$. We say that $(V,E,m)$ has full support if the support of $m$ is $V$. For a finite measured graph $(V,E,m)$, we define its \emph{Cheeger constant} to be:
\begin{equation}\label{EQ:measured cheeger}
\ch(V,E,m):=\min\big\{ \frac{m(\partial^V A)}{m(A)}:A\subseteq V,\  0<m(A) \leq \frac{1}{2}m(V) \big\}.
\end{equation}
Again, we note that the Cheeger constant of $(V,E,m)$ is positive \emph{if and only if} the full subgraph of $(V,E)$ with $\supp(m)$ as the vertex set is connected.

We recall the following measured version of expanders:

\begin{defn}[{\cite[Definition 4.18]{measured_I}}]\label{defn:measured expanders}
A sequence of \emph{measured expanders} is a sequence of finite measured graphs $\{(V_n,E_n,m_n)\}_{n\in \N}$ such that $\inf_{n\in \N} \ch(V_n,E_n,m_n) >0$.
\end{defn}

\begin{rem}\label{rem:support of m.e.}
For a sequence of finite measured graphs $\{(V_n,E_n,m_n)\}_{n\in \N}$, we consider the sequence of measured subgraphs $\{(\G'_n,m_n)\}_{n\in \N}$ such that $\G'_n$ is the full subgraph in $(V_n,E_n)$ with the vertex set $\supp (m_n)$. Then $\{(V_n,E_n,m_n)\}_{n\in \N}$ is a sequence of measured expanders \emph{if and only if} $\{(\G'_n,m_n)\}_{n\in \N}$ is. The argument is straightforward, because that for any $n\in \N$ and any $A \subseteq V_n$ we have $\partial^{\supp (m_n)}(A \cap \supp (m_n))=\partial^{V_n}(A \cap \supp (m_n)) \cap \supp (m_n) \subseteq \partial^{V_n} A$. 
Hence, without loss of generality we only need to consider measured expanders with full support. 

\end{rem}

Recall from \cite{measured_I} that a sequence of finite measured graphs $\{(V_n,E_n,m_n)\}_{n\in \N}$ is called \emph{ghostly} if
\begin{align}\label{ghostly}
\lim_{n\to \infty} \sup_{v\in V_n} \frac{m_n(v)}{m_n(V_n)}=0.
\end{align}
Note that the condition (\ref{ghostly}) necessarily implies that $|\supp (m_n)|\to \infty$ as $n\to \infty$. However, it is clear that the converse may fail in general. Moreover, if each $m_n$ is the counting measure on $V_n$ then $\{(V_n,E_n,m_n)\}_{n\in \N}$ is ghostly \emph{if and only if} $|V_n|\to \infty$ as $n\to \infty$.

\section{Spectral gaps and Cheeger Inequalities}
It is a classical result that expander graphs can be characterised in terms of the spectral gap of graph Laplacians via Cheeger inequalities \cite{alon1986eigenvalues,alon1985lambda1,dodziuk1984difference,tanner1984explicit}. In this section, we first generalise it to the case of measured expanders coming from random walks, and then move on to a more general case of measured expanders with bounded measure ratio.

\subsection{A review of the Poincar\'{e} inequality for expander graphs}\label{sec:expander graphs}
Let $\G=(V,E)$ be a finite $k$-regular graph for some $k\in \N$, and $\ell^2(V)$ be the Hilbert space of all complex-valued functions on $V$ equipped with the inner product 
\[
\langle f,g \rangle:= \sum_{v\in V} f(v) \overline{g(v)}
\]
for $f,g \in \ell^2(V)$. The graph Laplacian $\Delta$ is a linear operator in $\B(\ell^2(V))$ defined by:
\begin{equation*}
(\Delta f)(v)=f(v)-\frac{1}{k}\sum_{w\in V: w\thicksim v}f(w)
\end{equation*}
for $f\in \ell^2(V)$. It follows from direct calculations that $\Delta$ is a positive operator with norm at most $2$, the constant functions on $V$ are eigenvectors for eigenvalue $0$, and if the graph is connected then $\Delta f=0$ \emph{if and only if} $f$ is constant. The \emph{spectral gap} of the graph $(V,E)$ is defined to be the smallest positive eigenvalue of $\Delta$. Then we have the following Cheeger inequalities:

\begin{prop}[\cite{alon1986eigenvalues,alon1985lambda1,dodziuk1984difference,tanner1984explicit}]\label{prop:classical Poincare}
Let $\G=(V,E)$ be a finite connected $k$-regular graph with positive Cheeger constant $c$ and spectral gap $\lambda$. Then:
\[
\frac{c^2}{2} \leq \lambda \leq 2c.
\]
In particular, we have the following Poincar\'{e} inequality:
\begin{equation}\label{EQ:Poincare for expanders}
\sum_{v,w\in V: v\thicksim w} |f(v)-f(w)|^2\geq c^2 \sum_{v\in V} |f(v)|^2
\end{equation}
for $f\in \ell^2(V)$ with $\sum_{v\in V} f(v)=0$.
\end{prop}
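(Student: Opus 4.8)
The plan is to treat the two inequalities separately and then read off the Poincar\'{e} inequality from the lower bound. Throughout I will use the variational description of the spectral gap: since $\G$ is connected and $k$-regular, $\Delta$ is a positive operator whose kernel is exactly the constant functions, so by the min--max principle
\[
\lambda = \min\Big\{ \frac{\langle \Delta f, f\rangle}{\langle f,f\rangle} : f\in\ell^2(V),\ \textstyle\sum_{v\in V}f(v)=0,\ f\neq 0\Big\}.
\]
A direct computation gives the Dirichlet-form identity $\langle \Delta f,f\rangle = \frac{1}{2k}\sum_{v,w:\,v\thicksim w}|f(v)-f(w)|^2$, the sum running over ordered pairs of adjacent vertices. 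With this identity the claimed inequality \eqref{EQ:Poincare for expanders} is a (slightly weaker) consequence of the estimate $\langle\Delta f,f\rangle\geq \frac{c^2}{2}\|f\|^2$ for mean-zero $f$, since $\sum_{v\thicksim w}|f(v)-f(w)|^2=2k\langle\Delta f,f\rangle\geq kc^2\|f\|^2\geq c^2\|f\|^2$ using $k\geq 1$. Hence \eqref{EQ:Poincare for expanders} will follow once the lower bound $\lambda\geq c^2/2$ is established, and splitting $f$ into real and imaginary parts reduces both estimates to real-valued $f$.

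For the upper bound $\lambda\leq 2c$ I would exhibit a good test function. Let $A\subseteq V$ realise the Cheeger constant, so $0<|A|\leq |V|/2$ and $|\partial^V A|/|A|=c$, and set $f=\mathbf 1_A-\frac{|A|}{|V|}\mathbf 1$, which has zero mean. Since $f(v)-f(w)$ agrees with $\mathbf 1_A(v)-\mathbf 1_A(w)$, the numerator of the Rayleigh quotient is controlled by $|\partial^E A|$, while $\|f\|^2=|A|(|V|-|A|)/|V|\geq |A|/2$. Using $k$-regularity to bound $|\partial^E A|\leq k\,|\partial^V A|$, the Rayleigh quotient is at most $2|\partial^V A|/|A|=2c$; as $f$ is admissible in the variational problem, $\lambda\leq 2c$.

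The lower bound $\lambda\geq c^2/2$ is the substantial direction and follows the classical co-area argument. First I would reduce to a one-sided function supported on at most half the vertices: choosing a median value $m$ of $f$ and writing $g=(f-m)_+$, $h=(m-f)_+$, each of $g,h$ is nonnegative with support of size $\leq |V|/2$; a short edge-by-edge inequality shows the Dirichlet energy of $f$ dominates the sum of those of $g$ and $h$, while $\|f\|^2\leq\|f-m\|^2=\|g\|^2+\|h\|^2$ because the mean minimises the $L^2$-distance to a constant. It then suffices to prove, for nonnegative $\phi$ with $|\supp\phi|\leq |V|/2$, a bound of the form $\sum_{v\thicksim w}|\phi(v)-\phi(w)|^2\geq (\mathrm{const})\cdot c^2\,\|\phi\|^2$. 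For this I would apply the co-area (layer-cake) formula to $\phi^2$: writing $A_t=\{\phi^2>t\}$ one has $\sum_v\phi(v)^2=\int_0^\infty|A_t|\,dt$ and $\sum_{v\thicksim w}|\phi(v)^2-\phi(w)^2|=\int_0^\infty 2|\partial^E A_t|\,dt$, the sign of each difference being constant in $t$ by nestedness of the $A_t$. Since every $A_t\subseteq\supp\phi$ has $|A_t|\leq |V|/2$, the Cheeger bound gives $|\partial^E A_t|\geq |\partial^V A_t|\geq c\,|A_t|$, whence $\sum_{v\thicksim w}|\phi(v)^2-\phi(w)^2|\geq 2c\,\|\phi\|^2$. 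Finally, factoring $\phi(v)^2-\phi(w)^2=(\phi(v)-\phi(w))(\phi(v)+\phi(w))$, applying Cauchy--Schwarz, and bounding $\sum_{v\thicksim w}|\phi(v)+\phi(w)|^2\leq 4k\|\phi\|^2$ yields the desired quadratic lower bound with constant proportional to $c^2$.

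The main obstacle is this lower bound, and within it the bookkeeping of constants: the Cheeger constant in \eqref{EQ:measured cheeger} is defined through the \emph{vertex} boundary, whereas the co-area formula naturally produces the \emph{edge} boundary, so I must pass between the two (via $|\partial^V A|\leq|\partial^E A|\leq k\,|\partial^V A|$) while tracking the factor $k$ from $k$-regularity and the ordered-versus-unordered edge conventions. Arranging that these combine to give exactly $\frac{c^2}{2}$, rather than some weaker multiple of $c^2$, is the only delicate point; the reduction to one-sided functions and the Cauchy--Schwarz step are routine once the co-area inequality is in place.
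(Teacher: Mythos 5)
Your upper bound $\lambda\leq 2c$ and your reduction of \eqref{EQ:Poincare for expanders} to the eigenvalue bound are correct, and the upper-bound argument is essentially the paper's own: the paper does not prove Proposition~\ref{prop:classical Poincare} directly (it cites the classical literature and proves the measured generalisations), but your test function $\mathbf{1}_A-\tfrac{|A|}{|V|}\mathbf{1}$ is, up to an affine change, exactly the function used in the proof of Lemma~\ref{lem:Poincare to Cheeger} with $B=V\setminus A$ and $\rho=1$, which is how Proposition~\ref{prop:Poincare to Cheeger} is obtained.

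The genuine gap is in the lower bound, and it is worse than the ``delicate point'' you flag: it cannot be closed. In your argument the Cheeger constant enters only through $|\partial^E A_t|\geq|\partial^V A_t|\geq c\,|A_t|$, which converts vertex expansion into edge expansion with no gain of a factor $k$; tracking your constants (co-area, Cauchy--Schwarz costing $\sqrt{4k}\,\|\phi\|$, then the median decomposition, which is actually lossless) yields $\lambda\geq c^2/(2k^2)$, not $c^2/2$. No rearrangement can do better, because with the paper's vertex-boundary Cheeger constant \eqref{EQ:measured cheeger} the inequality $\lambda\geq c^2/2$ is \emph{false}. Take two copies of $K_n$ joined by a perfect matching: this graph is $n$-regular and connected, a short case check shows $|\partial^V A|\geq |A|$ for every $A$ with $0<|A|\leq n$ (so $c=1$, witnessed by taking $A$ to be one clique), yet its second-largest adjacency eigenvalue is $n-2$, so $\lambda=2/n\to 0$, violating $\lambda\geq\tfrac12$ for $n\geq 5$. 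What the cited references prove --- and what Proposition~\ref{prop: Cheeger to Poincare} gives when specialised to the simple random walk $r(u,v)=1/k$, $\mu\equiv1$, $a(e)=1/k$ --- is $\lambda\geq h^2/2$ for the edge-conductance constant $h=\min\big\{|\partial^E A|/(k|A|)\big\}$ as in \eqref{cheeger random}. Notably, if you run your own co-area step with $|\partial^E A_t|\geq kh\,|A_t|$ instead, your constants combine to give exactly $h^2/2$; so your method is sound and even sharp, and the defect lies in the proposition's vertex-boundary formulation, which it inherits from \eqref{EQ:measured cheeger}. Since $h\leq c\leq kh$, the upper bound $\lambda\leq 2h\leq 2c$ survives the translation to the vertex constant, but the lower bound degrades to your $c^2/(2k^2)$, and \eqref{EQ:Poincare for expanders} is only justified with $c^2/k$ in place of $c^2$ --- which is all that any application in this paper (uniform positivity over a sequence of uniformly bounded valency) actually requires.

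Methodologically, your lower-bound route also differs from the paper's general proof. In Proposition~\ref{prop: Cheeger to Poincare} (Steps I--IV, following \cite{davidoff2003elementary}) there is no median decomposition: one takes a real eigenfunction $g$ for $\lambda$, truncates to $f=\max\{g,0\}$, arranges $m(\supp f)\leq\tfrac12 m(V)$ by a sign change, proves the pointwise inequality $(\Delta f)(v)\leq\lambda f(v)$ on $\{g>0\}$ so that $\|df\|^2\leq\lambda\|f\|^2$, and only then applies co-area and Cauchy--Schwarz to this single one-sided function. Both routes give the sharp constant $h^2/2$ for the edge-conductance constant; the eigenfunction truncation simply replaces your median reduction. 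If you adopt either version and state the lower bound for $h$ rather than for the vertex constant $c$, your proof becomes complete.
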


In the rest of this section, we will develop analogous results to measured expanders either from random walks or with bounded measure ratio.

\subsection{Measured expanders from random walks}\label{ssec:me from random walk}
First let us recall some notions from the theory of random walks. For more details, see textbooks \cite{BdlHV:2008, woess2000random}.

A \emph{random walk} or a \emph{Markov chain} on a non-empty set $V$ is a map $r: V \times V \longrightarrow [0,\infty)$ such that $\sum_{v \in V} r(u,v) = 1$ for any $u\in V$. A \emph{stationary measure} $\mu$ for a random walk $r$ is a function $\mu: V \longrightarrow (0,\infty)$ such that $\mu(u)r(u,v)=\mu(v)r(v,u)$ for any $u,v\in V$. A random walk is called \emph{reversible} if it admits at least one stationary measure. In the reversible case, the function $a: V \times V \longrightarrow [0,\infty)$ defined by $a(u,v):=\mu(u)r(u,v)$ is called the \emph{conductance} function. Clearly, $a$ is \emph{symmetric} in the sense that $a(u,v)=a(v,u)$ for all $u,v\in V$, and we also have $\mu(u)=\sum_{v\in V}a(u,v)$ for all $u\in V$. Conversely, let $a: V\times V \to [0,\infty)$ be a symmetric map such that $\mu(u):=\sum_{v\in V} a(u,v)$ is positive and finite for each $u \in V$. Then the formula $r(u,v):=\frac{a(u,v)}{\mu(u)}$ defines a reversible random walk on $V$ with stationary measure $\mu$.

Given a reversible random walk $r$ on a non-empty set $V$ with a stationary measure $\mu$, we can endow $V$ with a (not necessarily connected,\footnote{The constructed graph $(V,E)$ is connected if and only if the random walk $r$ is irreducible (see \cite[Example~5.1.1]{BdlHV:2008} for details).} but undirected) graph $(V,E)$ without multiple edges by requiring that $u\thicksim_E v$ is an edge if and only if $r(u,v)>0$ (which is equivalent to that $r(v,u)>0$). Since the corresponding conductance function $a$ is symmetric, we define $a(e):=a(u,v)=a(v,u)$ for each edge $e\in E$ connecting vertices $u$ and $v$.
For $D \subseteq E$, its \emph{area} is defined to be $a(D):= \sum_{e\in D} a(e)$. We set $a(\emptyset)=0$. 
Now assume that $V$ is finite and let $m$ be a non-trivial finite measure on $V$ of full support. Then we define the \emph{$(\mu,a,m)$-Cheeger constant} of $(V,E,m)$ to be
\begin{align}\label{cheeger random}
\min\big\{\frac{a(\partial^E A)}{\mu(A)}: A\subseteq V \mbox{~with~} 0<m(A) \leq \frac{1}{2}m(V)\big\},
\end{align}
where $\partial^E A$ is the edge boundary of $A$. Similarly as in the remark following (\ref{EQ:measured cheeger}), if the $(\mu,a,m)$-Cheeger constant is positive, then the graph $(V,E)$ is automatically connected.

Given the above data, we consider the following Hilbert space:
$$\ell^2(V;\mu):=\big\{f: V \to \C ~\big|~ \sum_{v\in V} |f(v)|^2\mu(v)< \infty\big\} \quad \mbox{with} \quad \langle f_1, f_2\rangle_\mu:=\sum_{v\in V} f_1(v)\overline{f_2(v)} \mu(v).$$
The \emph{graph Laplacian} $\Delta \in \B(\ell^2(V;\mu))$ associated to the reversible random walk $r$ is defined as
\begin{equation}\label{EQ10}
(\Delta f)(v):=f(v)-\sum_{u\in V: u\thicksim v}f(u)r(v,u)
\end{equation}
for $f\in \ell^2(V;\mu)$ and $v,u \in V$. In fact, the graph Laplacian $\Delta$ is a positive bounded operator with
norm at most $2$. When the constructed graph $(V,E)$ is connected, $\Delta f=0$ if and only if $f$ is constant (see, \emph{e.g.}, \cite[Proposition~5.2.2]{BdlHV:2008} for details).

The \emph{spectral gap} $\lambda$ of $(r,\mu)$ is defined to be the smallest positive eigenvalue of $\Delta$ as defined in (\ref{EQ10}). We now explore a measured version of Proposition \ref{prop:classical Poincare}. We start with a lower bound of the spectral gap, which is stated (without proof) in \cite[Proposition 5.1]{measured_I}.

\begin{prop}\label{prop: Cheeger to Poincare}
Let $r$ be a reversible random walk on a non-empty finite set $V$ with a stationary measure $\mu$ such that $a$ is the associated conductance function and $(V,E)$ is the associated graph structure. If $m$ is a non-trivial finite measure on $V$ of full support such that the $(\mu,a,m)$-Cheeger constant $c$ is positive, then the spectral gap $\lambda$ of $(r,\mu)$ is bounded from below by $\frac{c^2}{2}$. Consequently, we have the following Poincar\'{e} inequality:
\begin{equation}\label{EQ:Poincare from random walk}
\sum_{v,w\in V: v\thicksim_E w}|f(v)-f(w)|^2a(v,w) \geq c^2 \sum_{v\in V} |f(v)|^2\mu(v).
\end{equation}
for any $f\in \ell^2(V;\mu)$ with $\sum_{v\in V}f(v)\mu(v)=0$.
\end{prop}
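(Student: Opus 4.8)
The plan is to prove the Poincar\'{e} inequality \eqref{EQ:Poincare from random walk} directly for every $f\in\ell^2(V;\mu)$ with $\sum_{v\in V}f(v)\mu(v)=0$, and to read off the spectral gap bound from it. Reversibility makes $a$ symmetric, so a routine computation shows that $\Delta$ is self-adjoint with respect to $\langle\cdot,\cdot\rangle_\mu$ and that its Dirichlet form is
\begin{equation*}
\langle \Delta f, f\rangle_\mu = \tfrac{1}{2}\sum_{v,w\in V:\, v\thicksim_E w} a(v,w)\,|f(v)-f(w)|^2.
\end{equation*}
Since $c>0$ forces $(V,E)$ to be connected, the kernel of $\Delta$ consists exactly of the constant functions, so by the Courant--Fischer principle $\lambda$ is the minimum of $\langle\Delta f,f\rangle_\mu/\|f\|_\mu^2$ over nonzero $f$ with $\sum_v f(v)\mu(v)=0$. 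As \eqref{EQ:Poincare from random walk} reads $2\langle \Delta f,f\rangle_\mu \geq c^2\|f\|_\mu^2$ on this subspace, it is exactly the assertion $\lambda\geq c^2/2$, so it suffices to establish the inequality. Writing $f=f_1+if_2$ with $f_1,f_2$ real and noting that $\sum_v f(v)\mu(v)=0$ gives $\sum_v f_j(v)\mu(v)=0$, while both sides of \eqref{EQ:Poincare from random walk} split additively over $f_1$ and $f_2$, I may assume $f$ is real.

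The heart of the argument is a co-area (layer--cake) estimate, and the one genuinely new point compared with the classical Cheeger inequality is that $m$ enters only through the admissibility constraint $m(A)\le\frac12 m(V)$, whereas the ratio controlled by $c$ is $a(\partial^E A)/\mu(A)$. To respect this, I would choose $t\in\R$ to be a \emph{median of $f$ with respect to $m$}, so that $m(\{f>t\})\le\frac12 m(V)$ and $m(\{f<t\})\le\frac12 m(V)$, and set $g:=f-t$, $g_+:=\max(g,0)$, $g_-:=\max(-g,0)$. For every $s>0$ the super-level set $\{g_+^2>s\}=\{g>\sqrt s\}$ is contained in $\{f>t\}$, hence has $m$-measure at most $\frac12 m(V)$; since $m$ has full support, the definition of $c$ gives $a(\partial^E\{g_+^2>s\})\ge c\,\mu(\{g_+^2>s\})$. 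Combining the co-area identity
\begin{equation*}
\tfrac{1}{2}\sum_{v,w:\,v\thicksim_E w} a(v,w)\,\big|g_+(v)^2-g_+(w)^2\big| = \int_0^\infty a\big(\partial^E\{g_+^2>s\}\big)\,ds
\end{equation*}
with $\int_0^\infty \mu(\{g_+^2>s\})\,ds=\sum_v g_+(v)^2\mu(v)$ then yields $\sum_{v,w} a(v,w)\,|g_+(v)^2-g_+(w)^2|\ge 2c\sum_v g_+(v)^2\mu(v)$.

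Next I would pass from this $L^1$-bound to the Dirichlet form by Cauchy--Schwarz. Factoring $|g_+(v)^2-g_+(w)^2|=|g_+(v)-g_+(w)|\,(g_+(v)+g_+(w))$ and using $(g_+(v)+g_+(w))^2\le 2(g_+(v)^2+g_+(w)^2)$ gives
\begin{equation*}
2c\sum_v g_+(v)^2\mu(v)\ \le\ \Big(\sum_{v,w} a(v,w)|g_+(v)-g_+(w)|^2\Big)^{1/2}\cdot 2\Big(\sum_v g_+(v)^2\mu(v)\Big)^{1/2},
\end{equation*}
whence $c^2\sum_v g_+(v)^2\mu(v)\le \sum_{v,w} a(v,w)|g_+(v)-g_+(w)|^2=:D_+$, and symmetrically $c^2\sum_v g_-(v)^2\mu(v)\le D_-$. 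The final step, and the only place where the sharp constant $\tfrac{c^2}{2}$ rather than $\tfrac{c^2}{4}$ is won, is the elementary edgewise inequality $D_++D_-\le \sum_{v,w} a(v,w)|g(v)-g(w)|^2$, verified by distinguishing the signs of $g(v)$ and $g(w)$ on each edge. Adding the two bounds and using $g_+^2+g_-^2=g^2$, $g(v)-g(w)=f(v)-f(w)$, and $\|g\|_\mu^2=\|f\|_\mu^2+t^2\mu(V)\ge\|f\|_\mu^2$ (from $\sum_v f(v)\mu(v)=0$), I arrive at
\begin{equation*}
c^2\|f\|_\mu^2\ \le\ c^2\|g\|_\mu^2\ \le\ \sum_{v,w:\,v\thicksim_E w} a(v,w)\,|f(v)-f(w)|^2,
\end{equation*}
which is \eqref{EQ:Poincare from random walk}. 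I expect the main obstacle to be conceptual rather than computational: one must keep the roles of $m$ and $\mu$ separate and base the choice of median on $m$, since it is $m$ that governs which sets are admissible in the definition of $c$; with the correct median in hand, the remainder is the standard Cheeger machinery.
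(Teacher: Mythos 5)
Your proof is correct, and it takes a genuinely different route from the paper's at the decisive step, even though the two arguments share their core machinery: the co-area identity (the paper's Step I, your layer-cake integral), the Cauchy--Schwarz passage from $\sum_{v\sim w} a(v,w)\,|h(v)^2-h(w)^2|$ to the Dirichlet form (the paper's Step II), and the observation that level sets of $m$-measure at most $\tfrac12 m(V)$ are admissible in the definition of the $(\mu,a,m)$-Cheeger constant. The difference is in how these estimates reach the spectral gap. The paper argues via an eigenfunction: it takes a real eigenvector $g$ of $\Delta$ for the eigenvalue $\lambda$, arranges $m(\{g>0\})\le\tfrac12 m(V)$ by possibly replacing $g$ with $-g$, and exploits the pointwise inequality $(\Delta f)(v)\le\lambda f(v)$ on $\{g>0\}$ for $f=\max(g,0)$ to get $\|df\|^2\le\lambda\|f\|^2$; combined with $c\|f\|^2\le B_f\le\sqrt2\,\|f\|\,\|df\|$ this gives $\lambda\ge c^2/2$, and the Poincar\'e inequality is only then deduced as an unwinding of $\langle\Delta f,f\rangle\ge\tfrac{c^2}{2}\|f\|^2$. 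You reverse the logic: you prove the Poincar\'e inequality directly for every $\mu$-mean-zero $f$, replacing the sign trick (unavailable for an arbitrary $f$) by an $m$-median $t$, so that both $g_+$ and $g_-$ (for $g=f-t$) have admissible super-level sets, running the co-area/Cauchy--Schwarz estimate on each part separately, and recombining via the edgewise inequality $|g_+(v)-g_+(w)|^2+|g_-(v)-g_-(w)|^2\le|g(v)-g(w)|^2$, which, as you note, is exactly what preserves the constant $c^2/2$ rather than $c^2/4$; the bound $\lambda\ge c^2/2$ then follows from the variational characterization of $\lambda$ (legitimate because $c>0$ forces connectivity, so $\ker\Delta$ consists of the constants and its orthogonal complement is precisely $\{f:\sum_v f(v)\mu(v)=0\}$). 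As for what each approach buys: the paper's is shorter, needing only the positive part of one special function and no median, but it leans on the spectral theorem to produce that eigenfunction; yours is eigenfunction-free and works with arbitrary test functions, so it isolates more transparently the only role played by the auxiliary measure $m$ (admissibility of level sets, via the $m$-median) and is the form of the argument that adapts to settings where no eigenvector is available, such as the $L^p$-statements treated later in the paper.
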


\begin{rem}
Note that the spectral gap of $(r,\mu)$ does not depend on the auxiliary measure $m$, while the $(\mu,a,m)$-Cheeger constant does. One way to interpret the conclusion of Proposition \ref{prop: Cheeger to Poincare} is that different choices of measures $m$ provide different lower bounds for the (same) spectral gap. 

The possibility of choosing different auxiliary measures $m$ becomes important in the next subsection, where we address the case of measured expanders which do not necessarily arise from random walks.
\end{rem}

\begin{rem}
	Note that the left hand side of (\ref{EQ:Poincare from random walk}) coincides with $2\langle \Delta f,f\rangle$ by a direct calculation. After we arbitrarily choose an orientation on $E$, we rewrite the left hand side as follows: 
	\[
	\sum_{v,w\in V: v\thicksim_E w}|f(v)-f(w)|^2a(v,w) = 2\sum_{e \in E} |f(e^+)-f(e^-)|^2 a(e).
	\]
	In the following, we will use both forms without further explanation.
\end{rem}

The proof of Proposition \ref{prop: Cheeger to Poincare} is similar to the classical case of expander graphs. As we could not find an explicit proof for our setup in the literature, we provide a detailed proof here, following the argument given in \cite[Theorem 1.2.3]{davidoff2003elementary}. We also refer the reader to \cite[Chapter 4.A]{woess2000random} for the case of random walks on infinite graphs.

\begin{proof}[Proof of Proposition \ref{prop: Cheeger to Poincare}]

We fix an orientation on the edge set $E$, \emph{i.e.}, for each edge $e\in E$ we make a choice of orientation. Since the graph $(V,E)$ does not have multiple edges, we may regard $E$ as a subset in $V \times V$ and write $e=(e^-,e^+)$ for $e\in E$. Consider the following Hilbert space:
$$\ell^2(E;a):=\big\{F: E \to \C ~\big|~ \sum_{e \in E} |F(e)|^2a(e) <\infty\big\} \quad \mbox{with}
\quad \langle F,G\rangle:=\sum_{e \in E} F(e)\overline{G(e)}a(e).$$
It is straightforward to check that $\Delta=d^*d$, where
$$d: \ell^2(V;\mu) \rightarrow \ell^2(E;a), \quad (df)(e)=f(e^+)-f(e^-).$$
Let $f: V \to [0,\infty)$ be a non-negative function on $V$, and we consider the following value:
\begin{equation}\label{EQ1}
B_f:=\sum_{e \in E} |f(e^+)^2-f(e^-)^2|\cdot a(e).
\end{equation}
Denote by $\beta_r > \beta_{r-1} > \ldots > \beta_0$ the values $f$ takes, and
set $L_i:=\{v\in V : f(v) \geq \beta_i\}$.

\emph{Step I.} We claim:
$$B_f = \sum_{i=1}^r a(\partial^E L_i) \cdot (\beta_i^2 - \beta_{i-1}^2).$$
Indeed, for any $e=(e^-,e^+) \in E$ we will consider all the level sets $L_i$ it crosses as follows. Suppose $\max\{f(e^-),f(e^+)\}=\beta_{i(e)}$ and $\min\{f(e^-),f(e^+)\}=\beta_{j(e)}$. Replacing each item $|f(e^-)^2-f(e^+)^2|\cdot a(e)$ in (\ref{EQ1}) by
\[
\sum_{k=j(e)+1}^{i(e)} (\beta_k^2-\beta_{k-1}^2)\cdot a(e),
\]
we obtain that
\begin{equation}\label{EQ9}
B_f= \sum_{e \in E} \sum_{k=j(e)+1}^{i(e)} (\beta_k^2-\beta_{k-1}^2)\cdot a(e).
\end{equation}
Observe that every occurrence of the term $\beta_k^2-\beta_{k-1}^2$ in (\ref{EQ9}) corresponds to an edge $e\in E$ which has one endpoint in $L_k$ and the other one outside of $L_k$. Hence, the total number of times that $\beta_k^2-\beta_{k-1}^2$ appears in (\ref{EQ9}) is exactly
$$\sum_{e\in \partial^E L_k} a(e) = a(\partial^E L_k).$$
This proves the claim.

\emph{Step II.} Using the Cauchy--Schwartz inequality in $\ell^2(E;a)$, we have that
\begin{align*}
B_f &= \sum_{e\in E} |f(e^+)+f(e^-)| \cdot |f(e^+)-f(e^-)| \cdot a(e)\\
&\leq  \big( \sum_{e\in E} (f(e^+)+f(e^-))^2\cdot a(e) \big)^{\frac{1}{2}} \cdot \big( \sum_{e\in E} |f(e^+)-f(e^-)|^2\cdot a(e) \big)^\frac{1}{2}\\
&\leq  \big( \sum_{e\in E} (2f(e^+)^2+2f(e^-)^2)\cdot a(e) \big)^{\frac{1}{2}} \cdot \|df\| \\
& = \big( 2\sum_{v\in V} f(v)^2\cdot \sum_{w\sim v}a(v,w) \big)^{\frac{1}{2}} \cdot \|df\|\\
&= \sqrt{2} \cdot \|f\|\cdot \|df\|.
\end{align*}

\emph{Step III.} Now we assume further that $m(\supp f) \leq \frac{1}{2} m(V)$. We claim that $B_f \geq c \cdot \|f\|^2$. Indeed, note that $\beta_0=0$ and that for all $i\geq 1$ we have $m(L_i) \leq \frac{1}{2}m(V)$, so that $a(\partial^E L_i) \geq c \mu(L_i)$. Hence, by the claim in Step I we see that
\begin{align*}
B_f &\geq  \sum_{i=1}^r c\cdot \mu(L_i) \cdot (\beta_i^2 - \beta_{i-1}^2)\\
&= c\cdot \big[ \mu(L_r)\beta_r^2 + \mu(L_{r-1} \setminus L_r)\beta_{r-1}^2 + \ldots +  \mu(L_1 \setminus L_2)\beta_1^2\big]\\
&= c\cdot \|f\|^2.
\end{align*}

\emph{Step IV.} Let $g$ be a real-valued eigenvector of $\Delta$ associated to its smallest positive eigenvalue, \emph{i.e.}, to the spectral gap $\lambda$. Set $V^+:=\{v\in V: g(v)>0\}$ and
$f=\max\{g,0\}$. Since $\sum_{v\in V}g(v)\mu(v) =0$ and $g\neq 0$, by replacing $g$ by $-g$ if necessary, we can assume that
$0<m(V^+) \leq \frac{1}{2} m(V)$. For $v\in V^+$, we have
\begin{align*}
(\Delta f)(v) &= f(v)- \sum_{w\in V: w\thicksim v} f(w) r(v,w)\\ 
&= f(v)- \sum_{w\in V^+: w\thicksim v} g(w) r(v,w) \\
& \leq  g(v)- \sum_{w\in V: w\thicksim v} g(w) r(v,w)\\
& = (\Delta g)(v) = \lambda g(v) = \lambda f(v).
\end{align*}
It follows that
$$\|df\|^2= \langle \Delta f, f\rangle = \sum_{v\in V^+} (\Delta f)(v) \cdot f(v)\cdot \mu(v) \leq \lambda \sum_{v\in V^+}f(v)^2\cdot \mu(v) = \lambda \|f\|^2.$$
Combining all together, we obtain
$$c\cdot \|f\|^2 \leq B_f \leq  \sqrt{2} \cdot \|f\|\cdot \|df\| \leq \sqrt{2\lambda} \cdot \|f\|\cdot \|f\|.$$
Thus, we deduce that $\lambda \geq c^2/2$.

Finally, the Poincar\'e inequality is simply an expanded version of the inequality $\langle \Delta f,f\rangle \geq \frac{c^2}2\langle f,f\rangle$, which holds for all vectors $f$ orthogonal to the zero eigenspace of $\Delta$. Since $c>0$, the only eigenvectors for $0$ are the constant functions, and thus the vectors orthogonal to the zero eigenspace of $\Delta$ are precisely those $f$ satisfying $\sum_{v\in V}f(v)\mu(v)=0$. This concludes the proof.
\end{proof}

Now we consider an upper bound on the spectral gap in the case of $m=\mu$.

\begin{prop}\label{prop:Poincare to Cheeger}
Let $r$ be a reversible random walk on a non-empty finite set $V$ with a stationary measure $\mu$ such that $a$ is the associated conductance function and $(V,E)$ is the associated graph structure. Assume also that the $(\mu,a,\mu)$-Cheeger constant $c$ is positive. Then the spectral gap $\lambda$ of $(r,\mu)$ is bounded from above by $2c$.
\end{prop}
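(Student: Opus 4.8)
The plan is to combine the Rayleigh-quotient description of the spectral gap with a single, well-chosen test function built from a Cheeger-minimizing set. Since $\Delta=d^*d$ is a positive self-adjoint operator on the finite-dimensional Hilbert space $\ell^2(V;\mu)$ whose kernel consists precisely of the constant functions (positivity of $c$ forces $(V,E)$ to be connected, as noted after \eqref{cheeger random}), the spectral theorem gives
\[
\lambda=\min\Big\{\frac{\langle\Delta f,f\rangle_\mu}{\|f\|^2}\ :\ f\neq 0,\ \sum_{v\in V}f(v)\mu(v)=0\Big\}.
\]
Thus it suffices to produce one admissible $f$ (orthogonal to the constants) whose Rayleigh quotient is at most $2c$.

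To this end I would pick a subset $A\subseteq V$ with $0<\mu(A)\leq\tfrac12\mu(V)$ attaining the $(\mu,a,\mu)$-Cheeger constant, so that $a(\partial^E A)/\mu(A)=c$, and take the test function $f:=\chi_A-\frac{\mu(A)}{\mu(V)}$, which has $\mu$-mean zero by construction. For the numerator, since $\Delta$ kills constants and is self-adjoint we have $\langle\Delta f,f\rangle_\mu=\langle\Delta\chi_A,\chi_A\rangle_\mu$, and expanding through the edge form (cf. the remark following Proposition~\ref{prop: Cheeger to Poincare}) gives
\[
\langle\Delta\chi_A,\chi_A\rangle_\mu=\tfrac12\sum_{v,w\in V:\,v\thicksim_E w}|\chi_A(v)-\chi_A(w)|^2\,a(v,w)=a(\partial^E A),
\]
because $|\chi_A(v)-\chi_A(w)|^2$ equals $1$ exactly on the (doubly counted) boundary edges and $0$ otherwise. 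For the denominator, a direct computation yields $\|f\|^2=\mu(A)-\mu(A)^2/\mu(V)=\mu(A)\big(1-\mu(A)/\mu(V)\big)$, and the constraint $\mu(A)\leq\tfrac12\mu(V)$ forces $1-\mu(A)/\mu(V)\geq\tfrac12$, hence $\|f\|^2\geq\tfrac12\mu(A)$.

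Putting the two estimates together gives
\[
\lambda\leq\frac{\langle\Delta f,f\rangle_\mu}{\|f\|^2}\leq\frac{a(\partial^E A)}{\tfrac12\mu(A)}=2\cdot\frac{a(\partial^E A)}{\mu(A)}=2c,
\]
as desired. The computations here are entirely routine; the one step that deserves attention is the very first reduction, where I would carefully invoke positivity and self-adjointness of $\Delta$ together with the identification of its kernel with the constants (both already recorded in the setup) to justify that the minimum of the Rayleigh quotient over the orthogonal complement of the kernel is genuinely $\lambda$. It is also worth emphasizing that the hypothesis $m=\mu$ is exactly what makes $\chi_A$ work so cleanly: both the mean-zero correction and the bound $\mu(A)\le\tfrac12\mu(V)$ live with respect to $\mu$, so the boundary-edge count and the denominator estimate match up, which is why the statement is restricted to this case.
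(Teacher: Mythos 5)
Your proof is correct. In essence it is the same Rayleigh-quotient computation as the paper's, but organized differently: the paper first proves a more general lemma (Lemma \ref{lem:Poincare to Cheeger}, following \cite{alon1985lambda1}) bounding $\lambda\rho^2$ for \emph{any} two disjoint sets $A,B$ at distance $\rho$, using the truncated-distance test function $g(v)=\frac{\mu(V)}{\mu(A)}-\frac{1}{\rho}\big(\frac{\mu(V)}{\mu(A)}+\frac{\mu(V)}{\mu(B)}\big)\min\{d(v,A),\rho\}$ centred at its mean, and then deduces the proposition by specializing to $B=V\setminus A$, $\rho=1$. In that special case the paper's test function equals $\frac{\mu(V)}{\mu(A)}\chi_A-\frac{\mu(V)}{\mu(B)}\chi_{B}$, which is already mean-zero and is a scalar multiple of your $\chi_A-\mu(A)/\mu(V)$, so the two arguments produce literally the same estimate $\lambda\le\big(\frac{1}{\mu(A)}+\frac{1}{\mu(B)}\big)a(\partial^E A)\le \frac{2}{\mu(A)}a(\partial^E A)$; indeed your denominator satisfies $\frac{1}{\mu(A)(1-\mu(A)/\mu(V))}=\frac{1}{\mu(A)}+\frac{1}{\mu(B)}$. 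What your streamlined version gives up is only the extra generality of the lemma: the paper reuses Lemma \ref{lem:Poincare to Cheeger} (with the same test function, but for an edge weight that is not a conductance function) in the proof of Proposition \ref{prop:Poincare to Cheeger for measured expanders} for measured graphs with bounded measure ratio, so the detour is not wasted there. A final cosmetic difference: you evaluate the quotient at a Cheeger-minimizing set, whereas the paper runs the argument for every admissible $A$ and concludes $a(\partial^E A)\ge\frac{\lambda}{2}\mu(A)$; since $V$ is finite these are equivalent, and your appeal to attainment of the minimum is justified.
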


We will follow the proof in \cite{alon1985lambda1}, and start with the following lemma:

\begin{lem}\label{lem:Poincare to Cheeger}
Let $r$ be a reversible random walk on a non-empty finite set $V$ with a stationary measure $\mu$ such that $a$ is the associated conductance function, $(V,E)$ is the associated graph structure and $\lambda$ is the spectral gap of $(r,\mu)$. Assume also that $(V,E)$ is connected. For any subsets $A,B \subseteq V$ with $A \cap B = \emptyset$, we denote $\rho:=d(A,B)$ and $E_A$ (respectively $E_B$) the set of edges with both endpoints in $A$ (respectively in $B$). Then
\[
\lambda \cdot \rho^2 \leq \big(\frac{1}{\mu(A)} + \frac{1}{\mu(B)}\big) \cdot (a(E) - a(E_A) - a(E_B)).
\]
\end{lem}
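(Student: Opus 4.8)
The plan is to exhibit an explicit test function whose Rayleigh quotient for $\Delta$ is controlled by $a(E)-a(E_A)-a(E_B)$, and then to invoke the variational characterisation of the spectral gap. Concretely, I would set $\psi(v):=\min\{d(v,A),\rho\}$ for $v\in V$. Since the edge-path metric is $1$-Lipschitz, $|\psi(u)-\psi(v)|\leq 1$ whenever $u\thicksim_E v$; moreover $\psi\equiv 0$ on $A$ and $\psi\equiv\rho$ on $B$ (because every vertex of $B$ is at distance at least $\rho$ from $A$). The crucial consequence is that every edge lying entirely inside $A$, and every edge lying entirely inside $B$, contributes $0$ to the Dirichlet sum $\sum_{e\in E}|\psi(e^+)-\psi(e^-)|^2 a(e)$, so this sum ranges effectively over $E\setminus(E_A\cup E_B)$ and is bounded above by $\sum_{e\in E\setminus(E_A\cup E_B)}a(e)=a(E)-a(E_A)-a(E_B)$.

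Next I would pass from $\psi$ to a function orthogonal to the constants in $\ell^2(V;\mu)$. Writing $\bar\psi:=\mu(V)^{-1}\sum_{v}\psi(v)\mu(v)$ and $f:=\psi-\bar\psi$, subtraction of a constant leaves the Dirichlet form unchanged, while now $\sum_v f(v)\mu(v)=0$. Since $(V,E)$ is connected, the zero eigenspace of $\Delta$ consists exactly of the constant functions, so $\langle\Delta f,f\rangle\geq\lambda\|f\|^2$ for such $f$; combined with the identity $\langle\Delta f,f\rangle=\sum_{e\in E}|f(e^+)-f(e^-)|^2 a(e)$ recorded in the remark following Proposition~\ref{prop: Cheeger to Poincare}, this gives $\lambda\|f\|^2\leq a(E)-a(E_A)-a(E_B)$.

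It then remains to bound $\|f\|^2$ from below. The key observation is that $\|f\|^2=\sum_v(\psi(v)-\bar\psi)^2\mu(v)=\min_{t\in\R}\sum_v(\psi(v)-t)^2\mu(v)$, since the weighted mean minimises the weighted variance. Discarding all terms except those coming from $A$ (where $\psi=0$) and $B$ (where $\psi=\rho$) yields, for every $t$,
\[
\sum_v(\psi(v)-t)^2\mu(v)\geq t^2\mu(A)+(\rho-t)^2\mu(B),
\]
and minimising the right-hand quadratic in $t$ at $t=\rho\mu(B)/(\mu(A)+\mu(B))$ gives the value $\rho^2/\big(\tfrac{1}{\mu(A)}+\tfrac{1}{\mu(B)}\big)$. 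Hence $\|f\|^2\geq\rho^2/\big(\tfrac{1}{\mu(A)}+\tfrac{1}{\mu(B)}\big)$, and substituting this into the previous inequality and rearranging yields exactly the claimed bound.

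The steps are all elementary once the test function is fixed; the only point that requires genuine care — and the real idea of the argument — is the choice of the truncated distance function $\psi$, which simultaneously separates $A$ from $B$ by the full distance $\rho$ and has gradient supported off $E_A\cup E_B$. The remaining subtlety is recognising $\|f\|^2$ as the minimal weighted variance, so that the one-variable optimisation over $t$ (rather than forcing a particular centering constant) produces the sharp factor $\tfrac{1}{\mu(A)}+\tfrac{1}{\mu(B)}$.
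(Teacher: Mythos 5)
Your proof is correct and is essentially the paper's own argument: the paper uses the test function $g(v)=\frac{\mu(V)}{\mu(A)}-\frac{1}{\rho}\bigl(\frac{\mu(V)}{\mu(A)}+\frac{\mu(V)}{\mu(B)}\bigr)\min\{d(v,A),\rho\}$, which is just an affine rescaling of your $\psi$, centres it at its weighted mean, and applies the same variational characterisation of $\lambda$ with the same two bounds on the Dirichlet form and on the norm. Your variance-minimisation step (optimising over the centering constant $t$) is merely a cleaner packaging of the paper's computation that the quadratic $\bigl(\frac{\mu(V)}{\mu(A)}-\alpha\bigr)^2\mu(A)+\bigl(\frac{\mu(V)}{\mu(B)}+\alpha\bigr)^2\mu(B)$ is minimised at $\alpha=0$.
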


\begin{proof}
As in the proof of Proposition \ref{prop: Cheeger to Poincare}, we choose an orientation on $E$. Consider the following function on $V$:
\[
g(v):=\frac{\mu(V)}{\mu(A)} - \frac{1}{\rho}\cdot \big( \frac{\mu(V)}{\mu(A)} + \frac{\mu(V)}{\mu(B)} \big) \cdot \min\{d(v,A) ,\rho\}.
\]
It is clear that for any edge $e=(e^-,e^+)$, we have
\[
|g(e^+) - g(e^-)| \leq \frac{1}{\rho}\cdot \big( \frac{\mu(V)}{\mu(A)} + \frac{\mu(V)}{\mu(B)} \big).
\]
Set $\alpha:= \frac{1}{\mu(V)}\sum_{v\in V} g(v)\mu(v)$ and $f:=g-\alpha$. Then
\begin{align*}
\sum_{e\in E}|f(e^+)-f(e^-)|^2a(e) &= \sum_{e\in E}|g(e^+)-g(e^-)|^2a(e) \\
&= \sum_{e\in E \setminus (E_A \cup E_B)}|g(e^+)-g(e^-)|^2a(e)\\ 
&\leq  \frac{1}{\rho^2}\big( \frac{\mu(V)}{\mu(A)} + \frac{\mu(V)}{\mu(B)} \big)^2 (a(E) - a(E_A) - a(E_B)).
\end{align*}
On the other hand, we also have
\begin{align*}
\sum_{v\in V} |f(v)|^2 \mu(v) & \geq  \sum_{v\in A \cup B} |f(v)|^2 \mu(v)\\ 
&= \big( \frac{\mu(V)}{\mu(A)} - \alpha \big)^2 \cdot \mu(A) + \big( \frac{\mu(V)}{\mu(B)} + \alpha \big)^2 \cdot \mu(B) \\
&\geq  \mu(V)^2 \cdot \big( \frac{1}{\mu(A)} + \frac{1}{\mu(B)} \big).
\end{align*}
Hence, it follows from the definition of the spectral gap $\lambda$ that
\[
\frac{1}{\rho^2}\big( \frac{\mu(V)}{\mu(A)} + \frac{\mu(V)}{\mu(B)} \big)^2 (a(E) - a(E_A) - a(E_B)) \geq \lambda \mu(V)^2 \cdot \big( \frac{1}{\mu(A)} + \frac{1}{\mu(B)} \big),
\]
which concludes the proof.
\end{proof}

\begin{proof}[Proof of Proposition \ref{prop:Poincare to Cheeger}]
Given $A \subseteq V$ with $0<\mu(A) \leq \frac{\mu(V)}{2}$, we take $B = V \setminus A$. Since the Cheeger constant $c$ is positive, we know that the graph $(V,E)$ is connected so that $\rho:=d(A,B) = 1$. Applying Lemma \ref{lem:Poincare to Cheeger}, we obtain the following
\[
\lambda \leq \big(\frac{1}{\mu(A)} + \frac{1}{\mu(B)}\big) \cdot a(\partial^E A) \leq \frac{2}{\mu(A)}\cdot a(\partial^E A).
\]
Hence, we conclude that
\[
a(\partial^E A) \geq \frac{\lambda}{2} \mu(A),
\]
as desired.
\end{proof}

Combining Proposition \ref{prop: Cheeger to Poincare} and Proposition \ref{prop:Poincare to Cheeger}, we obtain the following desired result, which is the measured version of Proposition \ref{prop:classical Poincare}:

\begin{thm}\label{thm:Cheeger iff Poincare}
Let $r$ be a reversible random walk on a non-empty finite set $V$ with a stationary measure $\mu$ such that $a$ is the associated conductance function. Assume that the $(\mu,a,\mu)$-Cheeger constant $c$ is positive, and denote the spectral gap of $(r,\mu)$ by $\lambda$. Then
\[
\frac{c^2}{2} \leq \lambda \leq 2c.
\]
\end{thm}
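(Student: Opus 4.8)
The plan is to obtain Theorem~\ref{thm:Cheeger iff Poincare} by simply concatenating the two bounds already established in this subsection, invoked at the specific choice of auxiliary measure $m = \mu$. The lower bound $\lambda \geq c^2/2$ follows directly from Proposition~\ref{prop: Cheeger to Poincare}: one instantiates that proposition with $m = \mu$, so that the $(\mu,a,m)$-Cheeger constant appearing there becomes the $(\mu,a,\mu)$-Cheeger constant $c$ of the present statement, and the hypothesis that $c > 0$ (which, as remarked, forces $(V,E)$ to be connected) supplies exactly what Proposition~\ref{prop: Cheeger to Poincare} needs. The upper bound $\lambda \leq 2c$ is precisely the content of Proposition~\ref{prop:Poincare to Cheeger}, which is already stated for the $(\mu,a,\mu)$-Cheeger constant.

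The only genuine content to check is therefore a matter of bookkeeping: that both propositions are being applied under a single consistent set of hypotheses. Both require a reversible random walk $r$ on a non-empty finite set $V$ with stationary measure $\mu$, associated conductance function $a$, and associated graph structure $(V,E)$; both require the relevant Cheeger constant to be positive. These are exactly the hypotheses assumed in the theorem, with the auxiliary measure in Proposition~\ref{prop: Cheeger to Poincare} specialised to $m=\mu$. I would state explicitly that this specialisation is legitimate, i.e.\ that $\mu$ itself is a non-trivial finite measure of full support (it is finite because $V$ is finite and each $\mu(u)$ is positive and finite, non-trivial and of full support because $\mu(u) > 0$ for all $u$), so that Proposition~\ref{prop: Cheeger to Poincare} does indeed apply with this choice.

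Concretely, the proof reads: applying Proposition~\ref{prop: Cheeger to Poincare} with $m = \mu$ gives $\lambda \geq c^2/2$, while Proposition~\ref{prop:Poincare to Cheeger} gives $\lambda \leq 2c$. Chaining these yields
\[
\frac{c^2}{2} \leq \lambda \leq 2c,
\]
as claimed. There is no real obstacle here; the mathematical work was done in the two propositions. If anything, the one point deserving a sentence of care is confirming that the positivity of the $(\mu,a,\mu)$-Cheeger constant is the common ground-floor assumption underlying connectedness of $(V,E)$, which both propositions rely on, so that the two estimates are genuinely speaking about the same spectral gap $\lambda$ of the same reversible pair $(r,\mu)$.
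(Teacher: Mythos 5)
Your proposal is correct and matches the paper exactly: the paper derives this theorem by combining Proposition~\ref{prop: Cheeger to Poincare} (specialised to $m=\mu$) with Proposition~\ref{prop:Poincare to Cheeger}, which is precisely your argument. Your extra sentence checking that $\mu$ qualifies as a non-trivial finite measure of full support is a harmless bit of bookkeeping the paper leaves implicit.
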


In the following, we prove an $L^p$-version of the Poincar\'{e} inequality arising from random walks. To start, we present an alternative form of the Poincar\'{e} inequality in (\ref{EQ:Poincare from random walk}): For any $f: V \to \C$ with $\sum_{u\in V} f(u)\mu(u)=0$, we see that
\begin{align*}
  \sum_{u,v\in V} &|f(u)-f(v)|^2 \cdot \frac{\mu(u)\mu(v)}{\mu(V)}
  = 2\sum_{u\in V} |f(u)|^2 \big( \sum_{v\in V} \frac{\mu(u)\mu(v)}{\mu(V)} \big) - 2\sum_{u,v\in V} \mathrm{Re}(f(u)\overline{f(v)})\cdot \frac{\mu(u)\mu(v)}{\mu(V)}\\
  &=2\sum_{u\in V} |f(u)|^2\mu(u) - \frac{2}{\mu(V)} \cdot \mathrm{Re}~\big[ \big(\sum_{u\in V}f(u)\mu(u)\big) \cdot \big( \overline{\sum_{v\in V} f(v) \mu(v)}\big)\big] = 2\sum_{u\in V} |f(u)|^2\mu(u).
\end{align*}
In other words, (\ref{EQ:Poincare from random walk}) is equivalent to the following inequality:
\begin{equation}\label{EQ4}
\sum_{u,v\in V: u\thicksim v}|f(u)-f(v)|^2a(u,v) \geq \frac{c^2}{2} \sum_{u,v\in V} |f(u)-f(v)|^2 \frac{\mu(u)\mu(v)}{\mu(V)}.
\end{equation}
Note that both sides of (\ref{EQ4}) are invariant under replacing $f$ by $f+c$ for any $c\in\C$. In particular, (\ref{EQ4}) also holds for all functions $f:V\to\C$ without the restriction that $\sum_{v\in V}f(v)\mu(v)=0$. Consequently, for any $f: V \to \ell^2(\N)$ we have
\begin{equation}\label{EQ5}
\sum_{u,v\in V: u\thicksim v}\|f(u)-f(v)\|^2a(u,v) \geq \frac{c^2}{2} \sum_{u,v\in V} \|f(u)-f(v)\|^2 \frac{\mu(u)\mu(v)}{\mu(V)},
\end{equation}
simply by summing the individual coordinates on both sides of (\ref{EQ4}).

Now we move on to an $L^p$-version of Proposition \ref{prop: Cheeger to Poincare}. This was originally shown in \cite{Mat97} for the case of expander graphs and in \cite[Theorem 3.11]{de2017banach} for the case of random walks. Since we need to deal with the auxiliary measure $m$, we provide a proof here for completeness.

\begin{prop}\label{prop: lp Poincare}
Let $r$ be a reversible random walk on a non-empty finite set $V$ with a stationary measure $\mu$ such that $a$ is the associated conductance function and $(V,E)$ is the associated graph structure. Assume that $m$ is a non-trivial finite measure on $V$ of full support such that the $(\mu,a,m)$-Cheeger constant $c$ is positive. Then for any $p\in [1,\infty)$, there exists a positive constant $c_p$ depending only on $c$ and $p$ such that for any $f:V \to \C$, we have the following $L^p$-Poincar\'{e} inequality:
\begin{equation}\label{EQ:Poincare from random walk lp}
\sum_{u,v\in V: u\thicksim_E v}|f(u)-f(v)|^p a(u,v) \geq c_p \cdot \sum_{u,v\in V} |f(u)-f(v)|^p \frac{\mu(u)\mu(v)}{\mu(V)}.
\end{equation}
\end{prop}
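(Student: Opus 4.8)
The plan is to reduce the $L^p$ case to the $L^2$ case already established in Proposition~\ref{prop: Cheeger to Poincare}, via the standard trick of applying the inequality to the ``power'' of $f$ rather than to $f$ itself. Concretely, for a function $f:V\to\C$ I would first assume (as is harmless) that $f$ is real-valued and nonnegative, since the general complex case follows by splitting into real and imaginary parts and then positive and negative parts, absorbing the resulting constants into $c_p$. The key device is to observe that the quantity $|f(u)-f(v)|^p$ can be compared to $|f(u)^{p/2}\cdot\mathrm{sgn}(f(u))-f(v)^{p/2}\cdot\mathrm{sgn}(f(v))|^2$ up to constants depending only on $p$; more precisely, one uses the elementary inequality relating $|s-t|^p$ and $|s^{p/2}-t^{p/2}|^2$ (for $s,t\ge 0$), namely that there are constants $0<\gamma_p\le\Gamma_p<\infty$ with
\[
\gamma_p\,|s-t|^p \;\le\; \big|s^{p/2}-t^{p/2}\big|^2 \;\le\; \Gamma_p\,|s-t|^p
\qquad\text{whenever } |s-t|\le \max\{s,t\},
\]
together with a separate bound handling the regime where the difference dominates. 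The cleaner route, which I would adopt, is to set $h:=\mathrm{sgn}(f)\cdot|f|^{p/2}$ (so $|h|^2=|f|^p$) and run Step I--Step III of the proof of Proposition~\ref{prop: Cheeger to Poincare} directly with $f$ replaced by this $h$, since those steps only used the level-set/coarea structure of a nonnegative function.

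The core of the argument is therefore to re-examine the combinatorial identity in Step~I and the Cauchy--Schwarz estimate in Step~II with the exponent $p$ in place of $2$. Following the coarea expansion, I would write $B_f^{(p)}:=\sum_{e\in E}|f(e^+)^p-f(e^-)^p|\,a(e)$ and show, exactly as in Step~I, that this equals $\sum_{i=1}^r a(\partial^E L_i)\cdot(\beta_i^p-\beta_{i-1}^p)$, where the $L_i$ are the superlevel sets of the nonnegative function $f$. The Cheeger bound $a(\partial^E L_i)\ge c\,\mu(L_i)$ then yields $B_f^{(p)}\ge c\sum_v f(v)^p\mu(v)$ as in Step~III, under the support condition $m(\supp f)\le\tfrac12 m(V)$. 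For the upper estimate replacing Step~II, I would factor $f(e^+)^p-f(e^-)^p=(f(e^+)-f(e^-))\cdot\big(\sum_{j=0}^{p-1}f(e^+)^jf(e^-)^{p-1-j}\big)$ for integer $p$, or more robustly apply H\"older's inequality with a suitable pair of exponents to split $|f(e^+)^p-f(e^-)^p|$ into a factor controlled by $\sum_e|f(e^+)-f(e^-)|^p a(e)$ and a factor controlled by $\sum_v f(v)^p\mu(v)$. This produces a bound of the shape $\big(\sum_e|f(e^+)-f(e^-)|^p a(e)\big)^{1/p}\cdot\|f\|_p^{p-1}\gtrsim c\|f\|_p^p$, i.e. the one-sided $L^p$-Poincar\'e inequality against the measure $\mu$.

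The final step is to convert this $\mu$-version into the symmetrised form stated in \eqref{EQ:Poincare from random walk lp}, whose right-hand side is a double sum weighted by $\mu(u)\mu(v)/\mu(V)$. Here I would use the elementary observation (already exploited for $p=2$ in the passage to \eqref{EQ4}) that, after centering $f$ so that $\sum_u f(u)\mu(u)=0$, the double sum $\sum_{u,v}|f(u)-f(v)|^p\mu(u)\mu(v)/\mu(V)$ is comparable, up to a constant depending only on $p$, to $\sum_u|f(u)|^p\mu(u)$; one direction is Jensen/convexity and the other is the triangle inequality $|f(u)-f(v)|^p\le 2^{p-1}(|f(u)|^p+|f(v)|^p)$, again centering so the median or mean term is controlled. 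Since both sides of the claimed inequality are invariant under $f\mapsto f+\text{const}$, the centering assumption loses nothing. I expect the main obstacle to be the Step~II replacement: for $p=2$ the factorisation $f(e^+)^2-f(e^-)^2=(f(e^+)+f(e^-))(f(e^+)-f(e^-))$ made Cauchy--Schwarz exact, whereas for general real $p$ one must choose the H\"older exponents carefully and verify that the intermediate ``sum of products'' factor is genuinely dominated by $\|f\|_p^{p-1}$ with a constant depending only on $p$; tracking that this constant (and the one from the final symmetrisation) depends only on $c$ and $p$, not on the graph or on $m$, is the point requiring the most care.
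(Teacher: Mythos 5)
Your coarea argument with exponent $p$ (Steps I and III, plus H\"older in place of Cauchy--Schwarz) is sound as far as it goes, but it only establishes the inequality $\sum_{e\in E}|f(e^+)-f(e^-)|^p a(e)\geq c'_p\sum_{v\in V}f(v)^p\mu(v)$ for \emph{nonnegative} $f$ satisfying $m(\supp f)\leq\frac12 m(V)$ --- and note that this support condition is with respect to the auxiliary measure $m$, since $m$ is what enters the $(\mu,a,m)$-Cheeger constant. The genuine gap is your passage from this restricted statement to arbitrary $f$. Centering $f$ so that $\sum_u f(u)\mu(u)=0$ does make the double sum comparable to $\sum_u|f(u)|^p\mu(u)$ (your Jensen/convexity step is correct), but $\mu$-mean centering gives no control whatsoever on $m(\{f>0\})$ or $m(\{f<0\})$; hence neither $f_+$ nor $f_-$ need satisfy the hypothesis of your restricted inequality, and that hypothesis cannot simply be dropped (for a nonzero constant function the left-hand side vanishes while $\sum_v f(v)^p\mu(v)$ does not). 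Your opening reduction ``assume $f\geq0$ by splitting into positive and negative parts'' has the same hidden requirement, and is moreover incompatible in order with the later centering (a centered function is no longer nonnegative). This support-removal problem is exactly what Step IV (the eigenvector argument) accomplishes in the paper's proof of Proposition \ref{prop: Cheeger to Poincare}; that argument is special to $p=2$, and for $p\neq2$ you have no spectral theorem to invoke, so as written your proof does not close.

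The repair is the median trick, and it is short: choose $t$ to be an $m$-median of $f$, i.e.\ $m(\{f>t\})\leq\frac12 m(V)$ and $m(\{f<t\})\leq\frac12 m(V)$; then both $(f-t)_+$ and $(f-t)_-$ satisfy your support condition, the restricted inequality applies to each, and adding them --- using that $x\mapsto\max(x,0)$ is $1$-Lipschitz, so $|f(e^+)-f(e^-)|\geq\max\big(|(f-t)_+(e^+)-(f-t)_+(e^-)|,\,|(f-t)_-(e^+)-(f-t)_-(e^-)|\big)$ --- together with translation invariance and the convexity bound $\sum_{u,v}|f(u)-f(v)|^p\frac{\mu(u)\mu(v)}{\mu(V)}\leq 2^p\sum_v|f(v)-t|^p\mu(v)$ finishes the proof. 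With this fix your argument is correct, and in fact treats all $p\in[1,\infty)$ uniformly, unlike the paper, which argues in two cases: for $p\geq2$ it applies the already-proved $L^2$ inequality \eqref{EQ:Poincare from random walk} (valid for all $\mu$-centered functions, thanks to Step IV) to $|f|^{p/2}\sgn(f)$ suitably centered, then undoes the power via $\big||a|^{p}\sgn(a)-|b|^{p}\sgn(b)\big|\leq p|a-b|(|a|^{p-1}+|b|^{p-1})$ and H\"older; for $1\leq p<2$ it composes $f$ with Schoenberg's isometric embedding of $(\R,|x-y|^{p/2})$ into Hilbert space and sums the $L^2$ inequality over coordinates. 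One further caution: the two-sided pointwise bound $\gamma_p|s-t|^p\leq|s^{p/2}-t^{p/2}|^2\leq\Gamma_p|s-t|^p$ you float at the outset is false (one side fails near $s=t$ according to whether $p>2$ or $p<2$), which is precisely why H\"older, producing a factor $\|f\|_p^{p-2}$ rather than a pointwise comparison, is unavoidable; since you discard that route this is secondary.
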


\begin{proof}
First of all, we note that for any $p\in [1,\infty)$ and $a,b\in \R$, we have
$$2^{-\frac{p}{2}}(|a|^p + |b|^p)\leq(\sqrt{|a|^2 + |b|^2})^p \leq 2^{\frac{p}{2}}(|a|^p + |b|^p).$$
Hence, it suffices to consider real functions $f: V \to \R$. We divide the proof into the following two cases.

\emph{Case I: $1\leq p < 2$.} Since there exists an isometric embedding
$$\varphi: (\R,|x-y|^{\frac{p}{2}}) \hookrightarrow \ell^2(\N;\C)$$
(see for example \cite{schoenberg1938metric} and \cite[Example 3.5 in Chapter 1]{wells2012embeddings}), we consider the function $\varphi \circ f: V \to \ell^2(\N;\C)$. From (\ref{EQ5}), we have
\begin{align*}
&\sum_{u,v\in V: u\thicksim v} |f(u)-f(v)|^p a(u,v) = \sum_{u,v\in V: u\thicksim v}\|\varphi \circ f(u) - \varphi \circ f(v)\|^2 a(u,v) \\
&\geq  \frac{c^2}{2} \cdot \sum_{u,v\in V} \|\varphi \circ f(u) - \varphi \circ f(v)\|^2 \frac{\mu(u)\mu(v)}{\mu(V)}= \frac{c^2}{2} \cdot \sum_{u,v\in V} |f(u)-f(v)|^p \frac{\mu(u)\mu(v)}{\mu(V)}.
\end{align*}
Hence, we obtain (\ref{EQ:Poincare from random walk lp}) by setting $c_p:=\frac{c^2}{2}$ for $p\in [1,2)$.

\emph{Case II: $p\geq2$.} Replacing $f$ by $f-\sum_{v\in V} |f(v)|^{\frac{p}{2}} \sgn (f(v)) \mu(v)$, we can assume that $\sum_{v\in V} |f(v)|^{\frac{p}{2}} \sgn (f(v)) \mu(v) =0$. From (\ref{EQ:Poincare from random walk}), we have
\begin{equation}\label{EQ7}
\sum_{u,v\in V: u\thicksim v}\big| |f(u)|^{\frac{p}{2}} \sgn (f(u)) - |f(v)|^{\frac{p}{2}} \sgn (f(v)) \big|^2a(u,v) \geq c^2 \sum_{v\in V} |f(v)|^p \mu(v).
\end{equation}
Note that for any $p>1$ and $a,b\in \R$, we have
$$\big| |a|^p \sgn (a) - |b|^p \sgn (b) \big| \leq p\cdot |a-b|\cdot \big( |a|^{p-1} + |b|^{p-1} \big).$$
So for the left hand side of (\ref{EQ7}), we have
\begin{align*}
L.H.S. & \leq \sum_{u,v\in V: u\thicksim v} \big[ \frac{p}{2} |f(u)-f(v)| \cdot \big( |f(u)|^{\frac{p}{2}-1} + |f(v)|^{\frac{p}{2}-1} \big) \big]^2 \cdot a(u,v)\\
&= \frac{p^2}{4}\cdot \sum_{u,v\in V: u\thicksim v} |f(u)-f(v)|^2 \cdot \big( |f(u)|^{\frac{p}{2}-1} + |f(v)|^{\frac{p}{2}-1} \big)^2 \cdot a(u,v) \\
&\leq  \frac{p^2}{4} \cdot \big( \sum_{u,v\in V: u \thicksim v} |f(u)-f(v)|^p \cdot a(u,v) \big)^{\frac{2}{p}} \cdot \big( \sum_{u,v\in V: u \thicksim v} \big( |f(u)|^{\frac{p}{2}-1} + |f(v)|^{\frac{p}{2}-1} \big)^{\frac{2p}{p-2}} \cdot a(u,v) \big)^{\frac{p-2}{p}}\\
& \leq  \frac{p^2}{4} \cdot \big( \sum_{u,v\in V: u \thicksim v} |f(u)-f(v)|^p \cdot a(u,v) \big)^{\frac{2}{p}} \cdot \big( \sum_{u,v\in V: u \thicksim v} 2^{\frac{2p}{p-2}-1}\big( |f(u)|^{p} + |f(v)|^{p} \big) \cdot a(u,v) \big)^{\frac{p-2}{p}}\\
&\leq \frac{p^2}{4} \cdot 2^{1+\frac{2}{p}} \cdot \big( \sum_{u,v\in V: u \thicksim v} |f(u)-f(v)|^p \cdot a(u,v) \big)^{\frac{2}{p}} \cdot \big( \sum_{v\in V} |f(v)|^p \mu(v)\big)^{1-\frac{2}{p}}.
\end{align*}
Combined with (\ref{EQ7}), we obtain that
\begin{align*}
\sum_{u,v\in V: u \thicksim v}|f(u)-f(v)|^p a(u,v) &\geq  \big(\frac{4c^2}{p^2 \cdot 2^{1+\frac{2}{p}}}\big)^{\frac{p}{2}} \sum_{v\in V} |f(v)|^p \mu(v) \\
&\geq \big(\frac{4c^2}{p^2 \cdot 2^{1+\frac{2}{p}}}\big)^{\frac{p}{2}}\cdot \frac{1}{2^{p+1}} \sum_{u,v\in V} |f(u)-f(v)|^p \frac{\mu(u)\mu(v)}{\mu(V)}.
\end{align*}
Consequently, if $p\geq 2$ then (\ref{EQ:Poincare from random walk lp}) holds for $c_p:=\big(\frac{4c^2}{p^2 \cdot 2^{1+\frac{2}{p}}}\big)^{\frac{p}{2}}\cdot \frac{1}{2^{p+1}}$,
and we have finished the proof.
\end{proof}

In the remainder of this subsection, let us relate the above discussion on random walks to the notion of measured expanders introduced in Definition \ref{defn:measured expanders}. To this end, we need the following easy lemma. 

\begin{lem}\label{lem:me from random walk}
Let $r$ be a reversible random walk on a non-empty finite set $V$ with a stationary measure $\mu$ such that $a$ is the associated conductance function and $(V,E)$ is the associated graph structure. Then the $(\mu,a,\mu)$-Cheeger constant does not exceed the Cheeger constant $\mathrm{ch}(V,E,\mu)$ defined in (\ref{EQ:measured cheeger}). 
\end{lem}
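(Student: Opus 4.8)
The goal is to compare two quantities defined by minimising a ratio over subsets of $V$. The $(\mu,a,\mu)$-Cheeger constant minimises $a(\partial^E A)/\mu(A)$ while $\mathrm{ch}(V,E,\mu)$ minimises $\mu(\partial^V A)/\mu(A)$, both over the same family of subsets $\{A : 0 < \mu(A) \leq \frac{1}{2}\mu(V)\}$ (recall $m = \mu$ here, so the constraint sets coincide). Since the minima run over the same admissible sets with the same denominator $\mu(A)$, it suffices to show that for every such $A$ the numerator satisfies $a(\partial^E A) \leq \mu(\partial^V A)$; taking the minimum over $A$ then yields the claimed inequality between the two constants.

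The plan is therefore to fix an arbitrary $A \subseteq V$ and bound $a(\partial^E A)$ from above by $\mu(\partial^V A)$. First I would unwind the definitions: $a(\partial^E A) = \sum_{e \in \partial^E A} a(e)$, where each edge $e \in \partial^E A$ joins a vertex in $A$ to a vertex in $V \setminus A$. The key idea is to reorganise this edge sum by grouping edges according to their endpoint lying in $V \setminus A$. Every edge in $\partial^E A$ has exactly one endpoint $w \in V \setminus A$, and such a $w$ necessarily lies in the vertex boundary $\partial^V A$ (it is adjacent to $A$). Thus
\begin{equation*}
a(\partial^E A) = \sum_{e \in \partial^E A} a(e) \leq \sum_{w \in \partial^V A}\ \sum_{u \in A:\, u \thicksim w} a(u,w) \leq \sum_{w \in \partial^V A} \sum_{u \in V} a(u,w).
\end{equation*}
Now I would invoke the fundamental identity recorded in the preliminaries on random walks, namely $\mu(w) = \sum_{u \in V} a(u,w)$, which holds because $a$ is symmetric and $\mu(w) = \sum_{u} a(w,u)$. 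Substituting this gives $a(\partial^E A) \leq \sum_{w \in \partial^V A} \mu(w) = \mu(\partial^V A)$, exactly the desired edgewise-to-vertexwise comparison.

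Having established $a(\partial^E A) \leq \mu(\partial^V A)$ for every admissible $A$, dividing by $\mu(A) > 0$ and taking the minimum over all $A$ with $0 < \mu(A) \leq \frac{1}{2}\mu(V)$ completes the proof. I do not anticipate a serious obstacle here; the statement is indeed ``easy'' as the authors advertise. The only point requiring mild care is the passage from summing over boundary edges to summing over boundary vertices: one must check that each boundary edge is counted at most once when indexed by its outside endpoint (which is immediate since the edge's outside endpoint is unique) and that enlarging the inner sum from $u \in A$ to $u \in V$ only increases it (valid because $a \geq 0$). Both are harmless, so the essential content is simply the weight identity $\mu(w) = \sum_u a(u,w)$ together with the observation that the outside endpoints of boundary edges lie in $\partial^V A$.
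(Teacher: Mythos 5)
Your proof is correct and follows essentially the same route as the paper's: both group the edge-boundary sum $a(\partial^E A)$ by the outside endpoint lying in $\partial^V A$, enlarge the inner sum from neighbours in $A$ to all neighbours, and invoke the identity $\mu(u)=\sum_{v\in V}a(u,v)$ to obtain $a(\partial^E A)\leq \mu(\partial^V A)$. The only cosmetic difference is that you write the regrouping as an inequality where the paper records it as an equality; both are valid, and your concluding remark about each boundary edge having a unique outside endpoint justifies the sharper statement anyway.
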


\begin{proof}
Since $v \thicksim u$ if and only if $a(u,v)>0$, it follows that for every $A \subseteq V$ we have 
\[
a(\partial^E A) = \sum_{e\in \partial^E A} a(e) = \sum_{u\in \partial^V A} \sum_{v\in A: v \thicksim u} a(u,v) \leq \sum_{u\in \partial^V A} \sum_{v\in V: v \thicksim u} a(u,v) = \sum_{u\in \partial^V A} \mu(u) = \mu(\partial^V A).
\]
This concludes the proof.
\end{proof}

\begin{rem}
It is plausible that the two notions of Cheeger constants in Lemma~\ref{lem:me from random walk} may be different in general. It is also interesting to find conditions under which they are bound to each other. However, we did not pursue this line of questions.
\end{rem}

Consequently, we obtain the following:

\begin{prop}\label{prop:random walk are me}
Let $\{(V_n,r_n,\mu_n)\}_{n\in \N}$ be a sequence of reversible random walks $r_n$ on non-empty finite sets $V_n$ with stationary measures $\mu_n$. For each $n\in \N$, let $a_n$ be the associated conductance function and $E_n$ the associated edge set on $V_n$. Assume that there exists a constant $c>0$ such that each $(\mu_n,a_n,\mu_n)$-Cheeger constant is greater than $c$. Then $\{(V_n,E_n,\mu_n)\}_{n\in \N}$ is a sequence of measured expanders.
\end{prop}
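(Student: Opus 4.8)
The plan is to reduce the statement to the two facts already assembled: Lemma~\ref{lem:me from random walk}, which compares the $(\mu_n,a_n,\mu_n)$-Cheeger constant with the measured Cheeger constant $\ch(V_n,E_n,\mu_n)$, and Definition~\ref{defn:measured expanders}, which says that a uniform positive lower bound on the measured Cheeger constants is precisely what it means to be a sequence of measured expanders. So the whole argument is a short chain of inequalities applied uniformly in $n$.

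First I would invoke Lemma~\ref{lem:me from random walk} for each $n\in\N$: since $r_n$ is a reversible random walk with stationary measure $\mu_n$, conductance $a_n$, and associated graph $(V_n,E_n)$, the lemma gives
\[
\ch(V_n,E_n,\mu_n) \geq (\text{$(\mu_n,a_n,\mu_n)$-Cheeger constant}) > c
\]
for every $n$, where the strict inequality is the hypothesis. Taking the infimum over $n$ then yields $\inf_{n\in\N}\ch(V_n,E_n,\mu_n)\geq c>0$. By Definition~\ref{defn:measured expanders}, this is exactly the assertion that $\{(V_n,E_n,\mu_n)\}_{n\in\N}$ is a sequence of measured expanders, and the proof is complete.

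The one point worth checking, rather than a genuine obstacle, is that the auxiliary measure in the Cheeger constant of \eqref{EQ:measured cheeger} is taken to be $\mu_n$ itself, so that Lemma~\ref{lem:me from random walk} (which is stated precisely for $m=\mu$) applies directly; no choice of a separate auxiliary measure $m_n$ is needed here. There is no compactness, limiting, or regularity issue, since each inequality is pointwise in $n$ and the constant $c$ is uniform by assumption. Thus the main content of the proposition is already packaged in Lemma~\ref{lem:me from random walk}, and the statement follows immediately.
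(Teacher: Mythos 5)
Your proof is correct and follows exactly the route the paper intends: the paper derives Proposition \ref{prop:random walk are me} as an immediate consequence of Lemma \ref{lem:me from random walk}, precisely the chain $\ch(V_n,E_n,\mu_n)\geq$ ($(\mu_n,a_n,\mu_n)$-Cheeger constant) $>c$ uniformly in $n$, followed by Definition \ref{defn:measured expanders}. Your side remark that the lemma is stated for $m=\mu$ and hence applies directly is also exactly the right point to check (and the positivity of the Cheeger constant guarantees the connectedness required of a measured graph, as noted after \eqref{cheeger random}).
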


Let us extract the following definition:

\begin{defn}\label{defn:me from random walks}
A sequence of finite measured graphs $\{(V_n,E_n,\mu_n)\}_{n\in \N}$ is called a sequence of \emph{measured graphs from random walks} if for each $n\in \N$, there exists a reversible random walk $r_n$ on $V_n$ with the stationary measure $\mu_n$ and the associated edge set is $E_n$. Furthermore, it is called a sequence of \emph{measured expanders from random walks} if there exists a constant $c>0$ satisfying that each $(\mu_n,a_n,\mu_n)$-Cheeger constant is greater than $c$, where $a_n$ is the conductance function for $(r_n,\mu_n)$.
\end{defn}

As a consequence of Theorem \ref{thm:Cheeger iff Poincare}, we obtain the following reformulation:

\begin{thm}\label{thm:main thm for random walks}
Let $\{(V_n,E_n,\mu_n)\}_{n\in \N}$ be a sequence of measured graphs from random walks $\{r_n\}_{n\in \N}$, and $\Delta_n \in \B(\ell^2(V_n;\mu_n))$ be the graph Laplacian defined in (\ref{EQ10}) for each $n\in \N$. Then the following are equivalent:
\begin{enumerate}
 \item $\{(V_n,E_n,\mu_n)\}_{n\in \N}$ is a sequence of measured expanders from random walks;
 \item There exists a constant $c>0$ such that the spectral gap of $(r_n,\mu_n)$ is bounded from below by $c$ for each $n\in \N$;
 \item There exists a constant $c'>0$ such that the spectrum of $\Delta_n$ is contained in $\{0\} \cup [c',2]$ for each $n\in \N$.
\end{enumerate}
\end{thm}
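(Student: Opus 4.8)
The plan is to obtain this as a bookkeeping consequence of Theorem~\ref{thm:Cheeger iff Poincare} together with elementary spectral theory, since all the analytic work is already contained in Propositions~\ref{prop: Cheeger to Poincare} and~\ref{prop:Poincare to Cheeger}. Throughout I would write $c_n$ for the $(\mu_n,a_n,\mu_n)$-Cheeger constant and $\lambda_n$ for the spectral gap of $(r_n,\mu_n)$, and I would invoke the standing convention that each $(V_n,E_n)$ is connected, equivalently that $c_n>0$, so that Theorem~\ref{thm:Cheeger iff Poincare} applies to every $n$ and yields the two-sided estimate
\[
\frac{c_n^2}{2}\le \lambda_n\le 2c_n.
\]

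For the equivalence of (1) and (2), the only task is to chase uniform lower bounds through this estimate. If (1) holds, say $c_n>c>0$ for all $n$, then the left-hand inequality gives $\lambda_n\ge c_n^2/2>c^2/2$, so the spectral gaps are uniformly bounded below and (2) holds. Conversely, if (2) holds with $\lambda_n\ge c>0$, then the right-hand inequality gives $c_n\ge \lambda_n/2\ge c/2>0$, so the Cheeger constants are uniformly bounded below and (1) holds. Thus (1)$\Leftrightarrow$(2) costs only a harmless change of constant.

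For (2)$\Leftrightarrow$(3) I would argue purely at the level of the spectrum. Each $\Delta_n$ is a positive operator of norm at most $2$ on the finite-dimensional space $\ell^2(V_n;\mu_n)$, so its spectrum is a finite subset of $[0,2]$; since $(V_n,E_n)$ is connected, the kernel of $\Delta_n$ consists exactly of the constant functions, and $\lambda_n$ is by definition the least element of $\mathrm{spec}(\Delta_n)\setminus\{0\}$. Hence $\lambda_n\ge c$ holds for all $n$ if and only if every nonzero spectral value of $\Delta_n$ is at least $c$, which, together with $\mathrm{spec}(\Delta_n)\subseteq[0,2]$, is precisely the statement that $\mathrm{spec}(\Delta_n)\subseteq\{0\}\cup[c,2]$; so (2) and (3) are equivalent with $c'=c$. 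I do not expect a genuine obstacle in this argument: the substance lives in Theorem~\ref{thm:Cheeger iff Poincare}, and the only point deserving care is ensuring $c_n>0$ (via connectivity) so that the two-sided estimate is legitimately available for each $n$.
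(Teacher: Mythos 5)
Your proposal is correct and matches the paper's intent exactly: the paper offers no separate argument, stating the theorem as an immediate consequence of Theorem~\ref{thm:Cheeger iff Poincare}, which is precisely the bookkeeping you carry out (two-sided Cheeger estimate for (1)$\Leftrightarrow$(2), and the definition of the spectral gap as the least positive eigenvalue of the positive norm-at-most-$2$ operator $\Delta_n$ for (2)$\Leftrightarrow$(3)). Your care about connectivity guaranteeing $c_n>0$ is also consistent with the paper's standing convention that all graphs are connected.
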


\subsection{Measured expanders with bounded measure ratio}\label{ssec:me with bdd ratio}

In this subsection, we will focus on more general measured graphs which do not necessarily come from random walks.

The Cheeger constant (using the vertex expansion) defined in \eqref{EQ:measured cheeger} certainly makes sense for general finite measured graphs. However, the classical Cheeger constant from random walks (see \eqref{cheeger random}) requires a compatible measure $a$ (\emph{i.e.,} the conductance function) on the set of edges in order to control the spectral gap (see Theorem~\ref{thm:Cheeger iff Poincare} for the Cheeger inequality). To extend the Cheeger inequality to a general case, we restrict ourselves to finite measured graphs with bounded measure ratio. Nevertheless, this notion is flexible enough for our purposes in the study of Roe algebras and measured asymptotic expanders in \cite{measured_I}\footnote{It is shown in \cite[Corollary~4.21]{measured_I} that measured asymptotic expanders admit a uniform exhaustion by measured expander graphs with bounded measure ratios.}.

\begin{defn}\label{defn:bounded-measure-ratio}
	Let $(V,E,m)$ be a finite measured graph and $s\in (0,1)$. We say that it has \emph{measure ratio bounded by $s$} if for every edge $u\thicksim_E v$, we have $s\cdot m(v)\leq m(u)\leq \frac{1}{s}\cdot m(v)$. We say that a family of finite measured graphs $\{(V_n,E_n,m_n)\}_{n\in\N}$ has \emph{bounded measure ratio} if there exists $s\in (0,1)$ such that for every $n\in\N$, the finite measured graph $(V_n,E_n,m_n)$ has measure ratio bounded by $s$.
\end{defn}

\begin{rem}\label{rem:full support of me}
\begin{itemize}
\item For a single finite measured graph $(V,E,m)$ such that $m$ has full support, it always has bounded measure ratio by some $s\in (0,1)$. However, we will require a uniform bound on the measure ratio for a family of finite measured graphs (such as measured expanders). 

\item Recall that our standing assumption is that a finite measured graph $(V,E,m)$ is connected and $m$ is non-trivial. It follows directly that if $(V,E,m)$ has measure ratio bounded by some $s\in (0,1)$, then $m$ must have full support.
\end{itemize}
\end{rem}

The following lemma has been worked out in \cite{measured_I} and it builds an auxiliary random walk whose
stationary measure $\mu$ can control the original measure $m$ for the given finite measured graph with bounded measure ratio.   

\begin{lem}[{\cite[Lemma 5.2]{measured_I}}]\label{lem:auxiliary random walk}
Let $(V,E,m)$ be a finite measured graph such that $m$ has full support. Then there exists a (unique) reversible random walk $r: V \times V \to [0,\infty)$ with a (unique) stationary measure $\mu: V \to (0,\infty)$ and the associated conductance function $a:V \times V \to [0,\infty)$ satisfying the following:
\begin{enumerate}
  \item $a(u,v)=m(u)+m(v)$ whenever $u,v\in V$ such that $u \thicksim_{E} v$;
  \item For $u,v\in V$, we have $r(u,v)>0$ if and only if $u\thicksim_{E} v$.
\end{enumerate} 
If we additionally suppose that $(V,E,m)$ has valency bounded by $K\geq 1$, measure ratio bounded by $s\in (0,1)$ and Cheeger constant $c>0$, then
\begin{enumerate}
  \item[(3)] For $u\in V$, we have $\frac{s}{K(1+s)}\mu(u)\leq m(u) \leq \frac{1}{1+s}\mu(u)$;
  \item[(4)] The $(\mu,a,m)$-Cheeger constant in \eqref{cheeger random} is bounded below by $\frac{cs}{K}$.
\end{enumerate} 
\end{lem}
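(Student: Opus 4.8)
The plan is to construct the random walk from a conductance function that is \emph{defined} to satisfy (1), and then to read off the remaining properties. First I would set $a(u,v):=m(u)+m(v)$ whenever $u\thicksim_E v$ and $a(u,v):=0$ otherwise; this is symmetric, and because $m$ has full support and every vertex of the (finite, connected) graph has valency at least one, the quantity $\mu(u):=\sum_{v\in V}a(u,v)=\sum_{v\thicksim_E u}\bigl(m(u)+m(v)\bigr)$ is positive and finite. The recipe recalled at the start of Section~\ref{ssec:me from random walk} then turns $a$ into a reversible random walk $r(u,v):=a(u,v)/\mu(u)$ with stationary measure $\mu$ and conductance $a$; this gives (1) by construction and (2) since $a(u,v)>0$ exactly when $u\thicksim_E v$ (using $m>0$). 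For uniqueness I would note that any admissible triple $(r',\mu',a')$ has $a'(u,v)=\mu'(u)r'(u,v)$, which by (2) vanishes off edges and by (1) equals $m(u)+m(v)$ on edges, forcing $a'=a$; the normalisation $\sum_v r'(u,v)=1$ then forces $\mu'(u)=\sum_v a'(u,v)=\mu(u)$, and hence $r'=r$.

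For (3) I would estimate $\mu(u)=\sum_{v\thicksim_E u}(m(u)+m(v))$ termwise using the measure-ratio hypothesis. Since $m(v)\ge s\,m(u)$ on each edge, each summand is at least $(1+s)m(u)$, and there is at least one summand, giving $\mu(u)\ge(1+s)m(u)$; since $m(v)\le\tfrac1s m(u)$, each summand is at most $\tfrac{1+s}{s}m(u)$, and there are at most $K$ of them, giving $\mu(u)\le\tfrac{K(1+s)}{s}m(u)$. Rearranging yields the two stated bounds.

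Property (4) is the delicate step. Fix $A\subseteq V$ with $0<m(A)\le\tfrac12 m(V)$. For each $w\in\partial^V A$ choose one edge $e_w\in\partial^E A$ joining $w$ to an $A$-endpoint $u_w$; distinct $w$ give distinct edges, so $a(\partial^E A)\ge\sum_{w\in\partial^V A}a(e_w)$. The point I expect to matter is to \emph{not} discard the term $m(u_w)$: applying the measure ratio to $e_w$ gives $m(u_w)\ge s\,m(w)$, hence $a(e_w)=m(u_w)+m(w)\ge(1+s)m(w)$, and summing yields $a(\partial^E A)\ge(1+s)\,m(\partial^V A)\ge(1+s)\,c\,m(A)$ by the definition of the Cheeger constant. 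Combining with the bound $\mu(A)\le\tfrac{K(1+s)}{s}m(A)$ from (3), the two factors of $(1+s)$ cancel and leave $a(\partial^E A)/\mu(A)\ge cs/K$, as claimed. The main obstacle is precisely this cancellation: the cruder estimate $a(\partial^E A)\ge m(\partial^V A)$ only yields the weaker constant $cs/\bigl(K(1+s)\bigr)$, so one must exploit the measure ratio on the boundary edges themselves to recover the sharp bound $cs/K$.
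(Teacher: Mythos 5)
Your proof is correct. The paper itself gives no proof of this lemma (it only cites \cite[Lemma 5.2]{measured_I}), but your construction is the intended and indeed the only possible one: conditions (1) and (2) together with the conductance relation $a(u,v)=\mu(u)r(u,v)$ and the normalisation $\sum_v r(u,v)=1$ force the triple $(r,\mu,a)$, exactly as your uniqueness argument shows, and this is consistent with the paper's subsequent remark that counting measure recovers the regular random walk. Your verifications of (3) and of (4) are accurate, including the key point of retaining $m(u_w)\geq s\,m(w)$ on boundary edges so that the factors $(1+s)$ cancel and yield the stated constant $\frac{cs}{K}$ rather than the weaker $\frac{cs}{K(1+s)}$.
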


\begin{rem}
Note that when $m$ is the counting measure on $V$, the random walk $r$ constructed in Lemma \ref{lem:auxiliary random walk} is exactly the regular random walk on $V$, \emph{i.e.}, $r(u,v) = \frac{1}{\mathrm{val}(u)}$ for $u \thicksim v$ and $0$ otherwise. In this case, the associated stationary measure $\mu$ coincides with twice of the valency function. If $(V,E)$ has constant valency, we have recovered the setting of expander graphs discussed in Section \ref{sec:expander graphs}.
\end{rem}

Based on the construction of random walks from Lemma \ref{lem:auxiliary random walk}, we propose the following notion of spectral gap for measured graphs as the best constant for which a certain version of Poincar\'{e} inequality holds:

\begin{defn}\label{defn:spectral-gap-through-Poincare}
The \emph{spectral gap} of a finite measured graph $(V,E,m)$ with full support is the supremum of $\lambda$ satisfying the following inequality:
\begin{equation}\label{EQ:spectral gap for measured}
	\sum_{u,v\in V: u\thicksim_E v}|f(u)-f(v)|^2 (m(u) + m(v)) \geq 2\lambda \sum_{u\in V} |f(u)|^2 m(u),
\end{equation}
for any $f\in \ell^2(V;m)$ with $\sum_{v\in V}f(v)m(v)=0$. Equivalently,
\begin{equation}\label{EQ:spectral gap for measured 2}
	\sum_{u,v\in V: u\thicksim_E v}|f(u)-f(v)|^2 (m(u) + m(v)) \geq \lambda \sum_{u,v\in V} |f(u)-f(v)|^2 \cdot \frac{m(u)m(v)}{m(V)}
\end{equation}
for any $f\in \ell^2(V;m)$.
\end{defn}

Note that the equivalence between (\ref{EQ:spectral gap for measured}) and (\ref{EQ:spectral gap for measured 2}) follows exactly from the same calculations used to obtain (\ref{EQ4}). We leave the details to the reader.

The following lemma justifies the name of spectral gap in Definition \ref{defn:spectral-gap-through-Poincare} by the spectral gap of some genuine Laplacian operator:

\begin{lem}\label{lem:Lambda operator}
Let $(V,E,m)$ be a finite measured graph with full support, and $(r,\mu)$ be the random walk on $(V,E)$ from Lemma \ref{lem:auxiliary random walk}. Let $\Delta \in \B(\ell^2(V;\mu))$ be the graph Laplacian defined in (\ref{EQ10}), and $W: \ell^2(V;m) \to \ell^2(V;\mu)$ be the identity operator. Then the spectral gap of $(V,E,m)$ in Definition \ref{defn:spectral-gap-through-Poincare} coincides with the smallest positive eigenvalue of the operator $\Lambda:=W^* \Delta W \in \B(\ell^2(V;m))$.
\end{lem}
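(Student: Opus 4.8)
The plan is to compute the operator $\Lambda = W^*\Delta W$ explicitly and recognise the Poincar\'{e} inequality \eqref{EQ:spectral gap for measured} as the statement $\langle \Lambda f, f\rangle_m \geq \lambda \langle f,f\rangle_m$ restricted to the orthogonal complement of the constants. First I would pin down the adjoint $W^*$. Since $W: \ell^2(V;m)\to \ell^2(V;\mu)$ is the identity map on functions but between spaces with different inner products, for $f\in\ell^2(V;m)$ and $g\in\ell^2(V;\mu)$ we have $\langle Wf, g\rangle_\mu = \sum_v f(v)\overline{g(v)}\mu(v) = \sum_v f(v)\overline{g(v)}\frac{\mu(v)}{m(v)}\,m(v) = \langle f, (\mu/m)\cdot g\rangle_m$, so $W^*$ is multiplication by the positive function $\mu/m$. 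Thus $\Lambda f = \frac{\mu}{m}\cdot \Delta f$ as functions, and $\Lambda$ is the genuine self-adjoint operator on $\ell^2(V;m)$ whose eigenvalues I want to relate to the spectral gap.

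Next I would record the quadratic form of $\Lambda$. Using $\Delta = d^*d$ from the proof of Proposition~\ref{prop: Cheeger to Poincare} (with the conductance $a(u,v)=m(u)+m(v)$ from Lemma~\ref{lem:auxiliary random walk}(1)), a direct computation gives
\begin{equation*}
\langle \Lambda f, f\rangle_m = \langle W^*\Delta W f, f\rangle_m = \langle \Delta f, Wf\rangle_\mu = \langle df, df\rangle_a = \frac12\sum_{u,v\in V: u\thicksim_E v}|f(u)-f(v)|^2(m(u)+m(v)).
\end{equation*}
The middle equality uses that $W^*$ is adjoint to $W$, and the last step is the rewriting of $\langle\Delta f,f\rangle_\mu$ already noted after Proposition~\ref{prop: Cheeger to Poincare}. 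So the left-hand side of \eqref{EQ:spectral gap for measured} is exactly $2\langle\Lambda f,f\rangle_m$, while the right-hand side is $2\lambda\langle f,f\rangle_m$. Hence \eqref{EQ:spectral gap for measured} reads precisely $\langle \Lambda f,f\rangle_m \geq \lambda\langle f,f\rangle_m$ for all $f$ with $\sum_v f(v)m(v)=0$.

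It then remains to identify that constraint with orthogonality to the zero-eigenspace of $\Lambda$. Since $\Lambda = \frac{\mu}{m}\Delta$ and $\mu/m$ is strictly positive, $\Lambda f = 0$ iff $\Delta f = 0$, which (because $(V,E)$ is connected, by Remark~\ref{rem:full support of me}, and the kernel description of $\Delta$ from \cite[Proposition~5.2.2]{BdlHV:2008}) holds iff $f$ is constant. The $\ell^2(V;m)$-orthogonal complement of the constants is exactly $\{f : \langle f, 1\rangle_m = \sum_v f(v)m(v)=0\}$, matching the side condition in Definition~\ref{defn:spectral-gap-through-Poincare}. Therefore the supremum of admissible $\lambda$ in \eqref{EQ:spectral gap for measured} equals $\min\{\langle\Lambda f,f\rangle_m/\langle f,f\rangle_m : f\perp_m \mathbf{1},\, f\neq 0\}$, which by the Rayleigh--Ritz characterisation is the smallest positive eigenvalue of the self-adjoint operator $\Lambda$.

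The only real subtlety I anticipate is bookkeeping around the two different inner products: one must be careful that $\Lambda$ is self-adjoint and positive \emph{on $\ell^2(V;m)$} (it is, as $\Lambda = W^*\Delta W$ with $\Delta\geq 0$), so that Rayleigh--Ritz applies with the $m$-inner product, and that the eigenvalue $0$ of $\Lambda$ is simple with eigenspace the constants. Once the identity $\langle\Lambda f,f\rangle_m = \tfrac12\sum_{u\thicksim v}|f(u)-f(v)|^2(m(u)+m(v))$ is in hand, everything else is a formal application of the variational principle; no estimate or inequality beyond this is needed.
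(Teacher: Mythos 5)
Your proposal is correct and follows essentially the same route as the paper's proof: both rest on the identity $\langle \Lambda f, f\rangle_m = \tfrac12\sum_{u\thicksim v}|f(u)-f(v)|^2(m(u)+m(v))$, the identification of $\ker\Lambda$ with the constant functions, and the variational (Rayleigh--Ritz) characterisation of the smallest positive eigenvalue. Your explicit computation of $W^*$ as multiplication by $\mu/m$ (which makes $\ker\Lambda = \ker\Delta$ immediate) is a small additional detail the paper leaves implicit, but it does not change the structure of the argument.
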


\begin{proof}
First note that for $f: V \to \C$, the function $Wf$ is constant if and only if $f$ is. Hence the kernel of $\Lambda$ consists of constant functions on $V$. Also note that for any $f\in \ell^2(V;m)$, we have:
\[
\langle \Lambda f, f \rangle_{m} = \langle \Delta W f, W f \rangle_{\mu} = \frac{1}{2}\sum_{u,v\in V: u\thicksim v}|f(u)-f(v)|^2\big( m(u)+m(v) \big).
\]
Hence the smallest positive eigenvalue of $\Lambda$ is the largest $\lambda>0$ such that 
\[
\langle \Lambda f, f \rangle_m \geq \lambda \sum_{v\in V} |f(v)|^2m(v),
\]
\emph{i.e.},
\[
\frac{1}{2}\sum_{u,v\in V: u\thicksim v}|f(u)-f(v)|^2\big( m(u)+m(v) \big) \geq \lambda \sum_{v\in V} |f(v)|^2m(v),
\]
for all $f\in \ell^2(V;m)$ with $\sum_{v\in V}f(v)m(v)=0$. This coincides with the spectral gap from Definition \ref{defn:spectral-gap-through-Poincare}.
\end{proof}

The next result provides a control on the spectral gap of a finite measured graph by that of the associated random walk. This will allow us to bound or estimate spectral gap in terms of the Cheeger constant for measured graphs.

\begin{lem}\label{lem:controls on spectral gap}
Let $(V,E,m)$ be a finite measured graph with valency bounded by $K$ and measure ratio bounded by $s\in (0,1)$, and let $\lambda$ be the spectral gap of $(V,E,m)$. Let $(r,\mu)$ be the random walk on $(V,E)$ from Lemma \ref{lem:auxiliary random walk}, and $\lambda'$ be the spectral gap of $(r,\mu)$. Then:
\[
\frac{s(1+s)}{K} \cdot \lambda' \leq \lambda \leq \frac{K^2(1+s)}{s^2} \cdot \lambda'.
\] 
\end{lem}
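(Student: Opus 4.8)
The plan is to reformulate both spectral gaps as the best constant in one and the same translation-invariant Poincar\'e inequality, so that the only difference between $\lambda$ and $\lambda'$ lies in the choice of denominator measure. Write
\[
Q(f):=\sum_{u,v\in V:\, u\thicksim_E v}|f(u)-f(v)|^2\bigl(m(u)+m(v)\bigr),
\]
which by Lemma~\ref{lem:auxiliary random walk}(1) is the common numerator of both (\ref{EQ:spectral gap for measured}) and (\ref{EQ:Poincare from random walk}) (as $a(u,v)=m(u)+m(v)$ for $u\thicksim_E v$). For a measure $w$ on $V$ put $D_w(f):=\sum_{u,v\in V}|f(u)-f(v)|^2\,\frac{w(u)w(v)}{w(V)}$. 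The equivalence of (\ref{EQ:spectral gap for measured}) and (\ref{EQ:spectral gap for measured 2}) recorded in Definition~\ref{defn:spectral-gap-through-Poincare} says precisely that
\[
\lambda=\inf\Bigl\{\tfrac{Q(f)}{D_m(f)} \;:\; f:V\to\C \text{ non-constant}\Bigr\},
\]
and running the identical computation (the one producing (\ref{EQ4})) for the random walk $(r,\mu)$ rewrites the Poincar\'e inequality (\ref{EQ:Poincare from random walk}) governing $\lambda'$ as $Q(f)\geq\lambda' D_\mu(f)$ for every $f$, so that likewise $\lambda'=\inf\{Q(f)/D_\mu(f): f \text{ non-constant}\}$.

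I would stress that this reformulation is the crux. A priori $\lambda$ and $\lambda'$ are defined via Poincar\'e inequalities whose mean-zero constraints $\sum_v f(v)m(v)=0$ and $\sum_v f(v)\mu(v)=0$ cut out \emph{different} hyperplanes, and it is exactly this mismatch that blocks a direct comparison; the main obstacle is therefore reconciling the two orthogonality conditions. Passing to the translation-invariant quantities $D_m$ and $D_\mu$ dissolves both constraints at once, and reduces the whole lemma to a term-by-term comparison of these two denominators.

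The comparison of denominators is then routine. By Remark~\ref{rem:full support of me} the measure $m$ has full support and the graph is connected, so $\mathrm{ch}(V,E,m)>0$ and Lemma~\ref{lem:auxiliary random walk}(3) applies, giving $\alpha\,\mu(u)\leq m(u)\leq\beta\,\mu(u)$ with $\alpha=\frac{s}{K(1+s)}$ and $\beta=\frac{1}{1+s}$. Summing over $u$ yields $\alpha\,\mu(V)\leq m(V)\leq\beta\,\mu(V)$, whence for every pair $u,v$
\[
\frac{\alpha^2}{\beta}\cdot\frac{\mu(u)\mu(v)}{\mu(V)}\;\leq\;\frac{m(u)m(v)}{m(V)}\;\leq\;\frac{\beta^2}{\alpha}\cdot\frac{\mu(u)\mu(v)}{\mu(V)}.
\]
Multiplying by $|f(u)-f(v)|^2\geq0$ and summing gives $\frac{\alpha^2}{\beta}D_\mu(f)\leq D_m(f)\leq\frac{\beta^2}{\alpha}D_\mu(f)$, and a direct simplification shows $\frac{\beta^2}{\alpha}=\frac{K}{s(1+s)}$ and $\frac{\alpha^2}{\beta}=\frac{s^2}{K^2(1+s)}$.

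Finally I would transfer this sandwiching from the denominators to the gaps by dividing $Q(f)$ and taking infima over non-constant $f$. From $D_m\leq\frac{K}{s(1+s)}D_\mu$ one gets $\frac{Q(f)}{D_m(f)}\geq\frac{s(1+s)}{K}\frac{Q(f)}{D_\mu(f)}\geq\frac{s(1+s)}{K}\lambda'$ for all $f$, hence $\lambda\geq\frac{s(1+s)}{K}\lambda'$; symmetrically, from $D_m\geq\frac{s^2}{K^2(1+s)}D_\mu$ one gets $\frac{Q(f)}{D_m(f)}\leq\frac{K^2(1+s)}{s^2}\frac{Q(f)}{D_\mu(f)}$, and taking the infimum gives $\lambda\leq\frac{K^2(1+s)}{s^2}\lambda'$. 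These are the two asserted inequalities. The only point needing a little care beyond arithmetic is the justification of $\lambda'=\inf Q/D_\mu$, which rests on the translation-invariance of $Q$ and $D_\mu$ together with the identity $D_\mu(f)=2\sum_v|f(v)|^2\mu(v)$ valid when $\sum_v f(v)\mu(v)=0$; once this is in place, the rest is positivity and bookkeeping.
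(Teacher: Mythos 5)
Your proposal is correct and follows essentially the same route as the paper: both proofs recast $\lambda$ and $\lambda'$ as optimal constants in the translation-invariant Poincar\'e inequality with common numerator $Q(f)$ (using $a(u,v)=m(u)+m(v)$ and the computation behind (\ref{EQ4})), and then compare the two denominators term-by-term via the bounds $\frac{s}{K(1+s)}\mu(u)\leq m(u)\leq \frac{1}{1+s}\mu(u)$ from Lemma \ref{lem:auxiliary random walk}(3). Your constants and the transfer to the infima match the paper's argument exactly.
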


\begin{proof}
By Definition \ref{defn:spectral-gap-through-Poincare}, the number $\lambda$ is the supremum of $\lambda_1$ satisfying 
\begin{equation*}
	\sum_{u,v\in V: u\thicksim v}|f(u)-f(v)|^2 (m(u) + m(v)) \geq \lambda_1 \sum_{u,v\in V} |f(u)-f(v)|^2 \cdot \frac{m(u)m(v)}{m(V)}
\end{equation*}
for any $f: V \to \C$. On the other hand, the number $\lambda'$ is the supremum of $\lambda_2$ satisfying 
\begin{equation*}
	\sum_{u,v\in V: u\thicksim v}|f(u)-f(v)|^2 (m(u) + m(v)) \geq \lambda_2 \sum_{u,v\in V} |f(u)-f(v)|^2 \cdot \frac{\mu(u)\mu(v)}{\mu(V)}
\end{equation*}
for any $f: V \to \C$. Now note from condition (3) in Lemma \ref{lem:auxiliary random walk} that $\frac{s}{K(1+s)}\mu(v)\leq m(v) \leq \frac{1}{1+s}\mu(v)$ for all $v\in V$, hence we obtain:
\[
\frac{s^2}{K^2(1+s)}\cdot \sum_{u,v\in V} |f(u)-f(v)|^2 \cdot \frac{\mu(u)\mu(v)}{\mu(V)} \leq \sum_{u,v\in V} |f(u)-f(v)|^2 \cdot \frac{m(u)m(v)}{m(V)}
\]
and
\[
\sum_{u,v\in V} |f(u)-f(v)|^2 \cdot \frac{m(u)m(v)}{m(V)} \leq \frac{K}{s(1+s)} \cdot \sum_{u,v\in V} |f(u)-f(v)|^2 \cdot \frac{\mu(u)\mu(v)}{\mu(V)}.
\]
This concludes the proof.
\end{proof}

Now we can apply Proposition \ref{prop: Cheeger to Poincare} to the above auxiliary random walk $r$ and obtain the required Poincar\'{e} inequality for measured graphs in this case. Therefore, we reach the following lower bound for the spectral gap, which refines \cite[Corollary 5.3]{measured_I} in the case of a Hilbert space:

\begin{prop}\label{prop:Cheeger to Poincare for measured expanders}
Let $(V,E,m)$ be a finite measured graph with valency bounded by $K$, measure ratio bounded by $s\in (0,1)$ and Cheeger constant $c$. Then the spectral gap of $(V,E,m)$ is bounded from below by $\frac{c^2s^3(1+s)}{2K^3}$.
\end{prop}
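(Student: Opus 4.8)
The plan is to assemble this bound by chaining together the three results immediately preceding it, since all the genuine work has already been isolated into those lemmas. The skeleton is: pass from the measured graph to the auxiliary random walk of Lemma~\ref{lem:auxiliary random walk}, apply the Cheeger-to-Poincar\'{e} estimate of Proposition~\ref{prop: Cheeger to Poincare} to control the random walk's spectral gap from below, and then transfer that control back to the spectral gap of $(V,E,m)$ via the comparison in Lemma~\ref{lem:controls on spectral gap}.

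Concretely, I would first invoke Lemma~\ref{lem:auxiliary random walk} (which applies because bounded measure ratio forces full support, cf.\ Remark~\ref{rem:full support of me}) to produce the reversible random walk $(r,\mu)$ with conductance $a(u,v)=m(u)+m(v)$. Part~(4) of that lemma tells us that the $(\mu,a,m)$-Cheeger constant is bounded below by $\frac{cs}{K}$, which in particular is positive. I would then feed this into Proposition~\ref{prop: Cheeger to Poincare}, applied with the auxiliary measure $m$, to conclude that the spectral gap $\lambda'$ of $(r,\mu)$ satisfies
\[
\lambda' \;\geq\; \frac{1}{2}\Big(\frac{cs}{K}\Big)^{2} \;=\; \frac{c^{2}s^{2}}{2K^{2}}.
\]

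Finally I would apply the lower estimate of Lemma~\ref{lem:controls on spectral gap}, namely $\lambda \geq \frac{s(1+s)}{K}\,\lambda'$, where $\lambda$ is the spectral gap of $(V,E,m)$ in the sense of Definition~\ref{defn:spectral-gap-through-Poincare}. Combining the two displayed inequalities gives
\[
\lambda \;\geq\; \frac{s(1+s)}{K}\cdot \frac{c^{2}s^{2}}{2K^{2}} \;=\; \frac{c^{2}s^{3}(1+s)}{2K^{3}},
\]
which is exactly the claimed bound. I do not expect any real obstacle here: the proposition is essentially a bookkeeping composition, and the only points requiring a moment's care are verifying that the hypotheses of each cited statement are met (positivity of the $(\mu,a,m)$-Cheeger constant so that Proposition~\ref{prop: Cheeger to Poincare} applies, and the bounded-valency/bounded-ratio hypotheses needed for parts~(3)--(4) of Lemma~\ref{lem:auxiliary random walk} and for Lemma~\ref{lem:controls on spectral gap}). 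The genuine analytic content — the Cheeger inequality for random walks and the $\mu$-versus-$m$ comparison of the two Poincar\'{e} quadratic forms — has been discharged in the earlier results.
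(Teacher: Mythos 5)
Your proposal is correct and follows exactly the paper's own proof: invoke Lemma~\ref{lem:auxiliary random walk} to get the auxiliary random walk $(r,\mu)$ with $(\mu,a,m)$-Cheeger constant at least $\frac{cs}{K}$, apply Proposition~\ref{prop: Cheeger to Poincare} to bound its spectral gap below by $\frac{c^2s^2}{2K^2}$, and transfer back via the left inequality of Lemma~\ref{lem:controls on spectral gap}. The hypothesis checks you flag (full support from bounded measure ratio, positivity of the Cheeger constant) are also consistent with the paper's setup, so there is nothing to add.
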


\begin{proof}
Applying Proposition \ref{prop: Cheeger to Poincare} to the random walk $r$ with the associated stationary measure $\mu$ from Lemma \ref{lem:auxiliary random walk}, we obtain that the spectral gap of $(r,\mu)$ is bounded from below by $\frac{c^2s^2}{2K^2}$. Now the left inequality in Lemma \ref{lem:controls on spectral gap} concludes the proof.
\end{proof}

At this point, recall that in \cite{measured_I}, we apply Proposition \ref{prop: lp Poincare} directly to the auxiliary measure and obtain an $L^p$-version of Proposition \ref{prop:Cheeger to Poincare for measured expanders} for $p \in [1,\infty)$ as follows:

\begin{prop}[{\cite[Corollary 5.3]{measured_I}}]\label{prop:Poincare for measured expanders lp}
Let $(V,E,m)$ be a finite measured graph with valency bounded by $K$, measure ratio bounded by $s\in (0,1)$ and Cheeger constant $c$. Then for any $p\in [1,\infty)$, there exists a positive constant $c_p$ depending only on $c,s,p,K$ such that for any $f:V \to \C$, we have the following $L^p$-Poincar\'{e} inequality:
\begin{equation}\label{EQ:lp Poincare for measured expanders}
\sum_{u,v\in V: u\thicksim v}|f(u)-f(v)|^p (m(u) + m(v)) \geq c_p \sum_{u,v\in V} |f(u)-f(v)|^p \frac{m(u)m(v)}{m(V)}.
\end{equation}
\end{prop}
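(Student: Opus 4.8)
The plan is to apply the random-walk $L^p$-Poincaré inequality of Proposition~\ref{prop: lp Poincare} to the auxiliary random walk furnished by Lemma~\ref{lem:auxiliary random walk}, and then convert the resulting right-hand side from the stationary measure $\mu$ back to the original measure $m$ via the two-sided comparison in part~(3) of that lemma. Since the conductance of the auxiliary walk is tailored so that $a(u,v)=m(u)+m(v)$ across edges, the left-hand sides will match on the nose, and the only real work is the passage between $\mu$ and $m$ on the right.

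First I would invoke Lemma~\ref{lem:auxiliary random walk} to obtain the reversible random walk $r$ on $(V,E)$, its stationary measure $\mu$, and conductance function $a$ with $a(u,v)=m(u)+m(v)$ for every edge $u\thicksim_E v$, together with the lower bound $\frac{cs}{K}$ on the $(\mu,a,m)$-Cheeger constant from part~(4). Applying Proposition~\ref{prop: lp Poincare} to this data yields, for each $p\in[1,\infty)$, a constant $c'_p>0$ depending only on the $(\mu,a,m)$-Cheeger constant and $p$ such that
\[
\sum_{u,v\in V:\,u\thicksim_E v}|f(u)-f(v)|^p\,a(u,v) \geq c'_p\sum_{u,v\in V}|f(u)-f(v)|^p\,\frac{\mu(u)\mu(v)}{\mu(V)}
\]
for all $f:V\to\C$. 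Because the explicit formula for $c'_p$ in Proposition~\ref{prop: lp Poincare} is monotone increasing in the Cheeger constant, and since the true $(\mu,a,m)$-Cheeger constant is at least $\frac{cs}{K}>0$, I may replace it by $\frac{cs}{K}$ throughout and thereby arrange that $c'_p$ depends only on $c,s,K,p$. By construction the left-hand side above is exactly the left-hand side of~(\ref{EQ:lp Poincare for measured expanders}).

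It then remains to compare the two weighted double sums. Using part~(3) of Lemma~\ref{lem:auxiliary random walk} in the form $(1+s)\,m(u)\leq\mu(u)\leq\frac{K(1+s)}{s}\,m(u)$, I obtain the termwise estimate
\[
\frac{\mu(u)\mu(v)}{\mu(V)} \geq \frac{s(1+s)}{K}\cdot\frac{m(u)m(v)}{m(V)}
\]
valid for all $u,v\in V$; this is precisely the comparison already performed (for $p=2$) in the proof of Lemma~\ref{lem:controls on spectral gap}, and since it holds pointwise it survives multiplication by $|f(u)-f(v)|^p\geq 0$ and summation for any $p$. Chaining the two inequalities gives~(\ref{EQ:lp Poincare for measured expanders}) with $c_p:=\frac{s(1+s)}{K}\,c'_p$, which depends only on $c,s,p,K$.

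I do not anticipate a genuine obstacle: the argument is a direct transfer of the random-walk inequality across the comparison between $m$ and $\mu$, and the only points requiring care are bookkeeping of constants — in particular verifying that the measure comparison points in the direction needed to lower-bound the $\mu$-weighted sum by the $m$-weighted one, and noting that the positivity $c>0$ (hence $\frac{cs}{K}>0$) is exactly what makes Proposition~\ref{prop: lp Poincare} applicable in the first place.
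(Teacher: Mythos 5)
Your proposal is correct and follows exactly the route the paper indicates for this result: the paper cites \cite[Corollary 5.3]{measured_I} and notes that it is obtained by applying Proposition~\ref{prop: lp Poincare} directly to the auxiliary random walk of Lemma~\ref{lem:auxiliary random walk}, which is precisely your argument. Your bookkeeping is also sound --- the conductance $a(u,v)=m(u)+m(v)$ makes the left-hand sides agree, the termwise comparison $\frac{\mu(u)\mu(v)}{\mu(V)}\geq\frac{s(1+s)}{K}\cdot\frac{m(u)m(v)}{m(V)}$ follows from part~(3) of the lemma, and the monotonicity of the explicit constants in Proposition~\ref{prop: lp Poincare} justifies substituting the lower bound $\frac{cs}{K}$ for the true $(\mu,a,m)$-Cheeger constant.
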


Now we address the opposite direction in the Hilbert space case, \emph{i.e.}, an upper bound for the spectral gap. We have the following result analogous to Proposition \ref{prop:Poincare to Cheeger}:

\begin{prop}\label{prop:Poincare to Cheeger for measured expanders}
Let $(V,E,m)$ be a finite measured graph with valency bounded by $K$, measure ratio bounded by $s\in (0,1)$, and spectral gap $\lambda$. Then the Cheeger constant of $(V,E,m)$ is bounded from below by $\frac{s\lambda}{2(1+s)K}$.
\end{prop}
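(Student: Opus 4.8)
The plan is to imitate the strategy of Proposition~\ref{prop:Poincare to Cheeger}, but to argue directly with the measured spectral gap of Definition~\ref{defn:spectral-gap-through-Poincare} rather than passing through the auxiliary random walk of Lemma~\ref{lem:auxiliary random walk}. Fix a subset $A\subseteq V$ with $0<m(A)\leq\frac12 m(V)$ and write $B:=V\setminus A$, so that $m(B)\geq\frac12 m(V)$. Since the graph is connected (and $m$ has full support by Remark~\ref{rem:full support of me}), we have $\rho:=d(A,B)=1$, and the distance-based test function used in Lemma~\ref{lem:Poincare to Cheeger} reduces, up to an affine change $f\mapsto\alpha f+\beta$, to the indicator $\mathbf 1_A$. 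As the unconstrained form (\ref{EQ:spectral gap for measured 2}) of the Poincar\'e inequality defining $\lambda$ is invariant under such affine changes, this is the function I would feed into it.

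Plugging $f=\mathbf 1_A$ into (\ref{EQ:spectral gap for measured 2}) is an elementary computation: on the left one picks up exactly the edges in $\partial^E A$, each contributing its weight $m(e^+)+m(e^-)$ twice (once for each orientation), so the left-hand side equals $2\sum_{e\in\partial^E A}\big(m(e^+)+m(e^-)\big)$; on the right the factor $|f(u)-f(v)|^2$ is supported on ordered pairs with exactly one endpoint in $A$, giving $\lambda\cdot 2\,\frac{m(A)m(B)}{m(V)}$. Writing $a(e):=m(e^+)+m(e^-)$ as in Lemma~\ref{lem:auxiliary random walk}(1), this yields
\[
a(\partial^E A)\;\geq\;\lambda\,\frac{m(A)m(B)}{m(V)}\;\geq\;\frac{\lambda}{2}\,m(A),
\]
where the second inequality uses $m(B)\geq\frac12 m(V)$.

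The crux is then to convert the edge-boundary weight $a(\partial^E A)$ into the vertex-boundary measure $m(\partial^V A)$, and this is where both hypotheses enter. Each edge $e\in\partial^E A$ joins some $u\in A$ to some $w\in\partial^V A$; the measure-ratio bound gives $m(u)\leq\frac1s m(w)$, hence $m(e^+)+m(e^-)\leq\frac{1+s}{s}m(w)$. Summing over $\partial^E A$ and noting that each $w\in\partial^V A$ is an endpoint of at most $\val(w)\leq K$ boundary edges, I obtain
\[
a(\partial^E A)\;\leq\;\frac{1+s}{s}\sum_{w\in\partial^V A}\val(w)\,m(w)\;\leq\;\frac{K(1+s)}{s}\,m(\partial^V A).
\]
Combining the two displays gives $m(\partial^V A)\geq\frac{s\lambda}{2K(1+s)}m(A)$ for every admissible $A$, and taking the minimum over $A$ produces the asserted lower bound on $\ch(V,E,m)$. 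I expect the only genuinely delicate point to be this last translation step: one must apply the measure-ratio bound and the valency bound simultaneously (and in the correct directions) to land exactly on the constant $\frac{s}{2(1+s)K}$, whereas routing the estimate instead through the random-walk spectral gap $\lambda'$ and Lemma~\ref{lem:controls on spectral gap} would lose additional powers of $s$ and $K$.
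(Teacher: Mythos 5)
Your proposal is correct and is essentially the paper's own argument: the paper likewise tests the Poincar\'e inequality defining $\lambda$ on (an affine transform of) the indicator of $A$ --- by re-running Lemma \ref{lem:Poincare to Cheeger} with $B=V\setminus A$, $\rho=1$, $a(e)=m(e^-)+m(e^+)$ and $\mu=m$ --- and then converts the edge-boundary weight into the vertex-boundary measure via exactly your estimate $\sum_{e\in\partial^E A}\big(m(e^-)+m(e^+)\big)\leq \big(1+\tfrac{1}{s}\big)K\, m(\partial^V A)$, arriving at the same constant $\frac{s\lambda}{2(1+s)K}$. Your only departure is presentational: you plug $\mathbf{1}_A$ directly into (\ref{EQ:spectral gap for measured 2}) rather than citing the lemma, which is the same computation unpacked.
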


\begin{proof}
Let $A \subseteq V$ with $0<m(A) \leq \frac{m(V)}{2}$. We now apply the argument in the proof of Lemma \ref{lem:Poincare to Cheeger}, with $B=V\setminus A$ (and hence $\rho=1$ and $E\setminus(E_A\cup E_B)=\partial^E(A)$), $a(e)=m(e^{-})+m(e^{+})$, $\mu=m$,\footnote{Note that the assumption in Lemma \ref{lem:Poincare to Cheeger} that $\mu$ is the stationary measure is not used in its proof.} and function $f$ as defined there. This provides two inequalities:
\begin{align*}
	\sum_{e\in E} |f(e^+)-f(e^-)|^2 (m(e^-) + m(e^+)) &\leq  \big( \frac{m(V)}{m(A)} + \frac{m(V)}{m(X \setminus A)} \big)^2 \sum_{e\in \partial^E A} (m(e^-) + m(e^+)),\\
	\sum_{v\in V} |f(v)|^2 m(v) & \geq  m(V)^2 \cdot \big( \frac{1}{m(A)} + \frac{1}{m(X \setminus A)} \big).
\end{align*}
Observe that in our case we also have
\[\sum_{e\in \partial^E A} (m(e^-) + m(e^+))\leq m(\partial^VA)(1+\frac{1}{s})K.
\]
Combining the above three inequalities yields
\begin{equation}\label{eqn:looks-like-opposite-of-Poincare}
\sum_{e\in E} |f(e^+)-f(e^-)|^2 (m(e^-) + m(e^+)) \leq 
\big( \frac{1}{m(A)} + \frac{1}{m(X \setminus A)} \big)(1+\frac{1}{s})Km(\partial^VA)\sum_{v\in V} |f(v)|^2 m(v).
\end{equation}
Hence it follows from the definition of $\lambda$ that\footnote{Note that the first sum in the displayed equation \eqref{eqn:looks-like-opposite-of-Poincare} encounters every edge only once, while the corresponding sum in Definition \ref{defn:spectral-gap-through-Poincare} runs over adjacent pairs of vertices.}
\[
\big( \frac{1}{m(A)} + \frac{1}{m(X \setminus A)} \big) m(\partial^V A)(1+\frac{1}{s})K \geq \lambda,
\]
which implies that
\[
m(\partial^V A) \geq \frac{s\lambda}{2(1+s)K} m(A).
\]
This finishes the proof.
\end{proof}

Finally combining Proposition \ref{prop:Cheeger to Poincare for measured expanders} and Proposition \ref{prop:Poincare to Cheeger for measured expanders}, we obtain the following. Readers may compare with Theorem \ref{thm:Cheeger iff Poincare}. 

\begin{thm}\label{thm:measured expanders}
Let $(V,E,m)$ be a finite measured graph with valency bounded by $K$, measure ratio bounded by $s\in (0,1)$, Cheeger constant $c$ and spectral gap $\lambda$. Then
\[
\frac{c^2s^3(1+s)}{2K^3} \leq \lambda \leq \frac{2(1+s)Kc}{s}.
\]
\end{thm}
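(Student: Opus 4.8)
The plan is to read off both inequalities directly from the two propositions that immediately precede the theorem, since all of the analytic work has already been carried out there. The left inequality is precisely the conclusion of Proposition~\ref{prop:Cheeger to Poincare for measured expanders}, which asserts that the spectral gap of $(V,E,m)$ is bounded below by $\frac{c^2s^3(1+s)}{2K^3}$. For the right inequality, Proposition~\ref{prop:Poincare to Cheeger for measured expanders} gives the lower bound $c \geq \frac{s\lambda}{2(1+s)K}$ for the Cheeger constant; rearranging this for $\lambda$ yields exactly $\lambda \leq \frac{2(1+s)Kc}{s}$. Thus the proof is a one-line assembly of the two bounds.

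To emphasise where the substance actually lies, let me recall how the lower bound itself is produced. Starting from the measured graph $(V,E,m)$ with Cheeger constant $c$, one invokes Lemma~\ref{lem:auxiliary random walk} to manufacture the auxiliary reversible random walk $(r,\mu)$ with conductance $a(u,v)=m(u)+m(v)$, whose $(\mu,a,m)$-Cheeger constant is at least $\frac{cs}{K}$. Feeding this into the random-walk Cheeger inequality of Proposition~\ref{prop: Cheeger to Poincare} bounds the spectral gap of $(r,\mu)$ below by $\frac{c^2s^2}{2K^2}$, and the left-hand comparison of Lemma~\ref{lem:controls on spectral gap}, namely $\lambda \geq \frac{s(1+s)}{K}\lambda'$, transports this estimate to the spectral gap $\lambda$ of $(V,E,m)$, giving $\frac{c^2s^3(1+s)}{2K^3}$.

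The upper bound is handled directly in Proposition~\ref{prop:Poincare to Cheeger for measured expanders} via the Alon-type test-function argument of Lemma~\ref{lem:Poincare to Cheeger}, now run with the edge weight $a(e)=m(e^-)+m(e^+)$, the stationary measure $\mu$ replaced throughout by $m$ itself, and the valency/measure-ratio hypotheses used to bound $\sum_{e\in\partial^E A}(m(e^-)+m(e^+))$ by $(1+\frac{1}{s})K\,m(\partial^V A)$. Since both propositions are already established, there is no genuine obstacle remaining at the level of the theorem: the only care needed is bookkeeping of the constants, checking that the two chains of estimates compose to give precisely $\frac{c^2s^3(1+s)}{2K^3}$ and $\frac{2(1+s)Kc}{s}$. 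I would therefore write the proof as a short invocation of Propositions~\ref{prop:Cheeger to Poincare for measured expanders} and~\ref{prop:Poincare to Cheeger for measured expanders}.
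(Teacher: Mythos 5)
Your proposal is correct and is exactly the paper's own argument: the theorem is stated there as a direct combination of Proposition~\ref{prop:Cheeger to Poincare for measured expanders} (giving $\lambda \geq \frac{c^2s^3(1+s)}{2K^3}$) and Proposition~\ref{prop:Poincare to Cheeger for measured expanders} (whose bound $c \geq \frac{s\lambda}{2(1+s)K}$ rearranges to the upper bound), with no further proof given. Your recollection of how those two propositions are themselves established --- the auxiliary random walk of Lemma~\ref{lem:auxiliary random walk} combined with Proposition~\ref{prop: Cheeger to Poincare} and Lemma~\ref{lem:controls on spectral gap} for the lower bound, and the Alon-type test-function argument of Lemma~\ref{lem:Poincare to Cheeger} for the upper bound --- also matches the paper.
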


Consequently, we apply Theorem \ref{thm:measured expanders} to a sequence of finite measured graphs and obtain the following (readers may compare with Theorem \ref{thm:main thm for random walks}):

\begin{thm}\label{thm:main thm for bounded ratio}
Let $\{(V_n,E_n,m_n)\}_{n\in \N}$ be a sequence of finite measured graphs with uniformly bounded valency and bounded measure ratio. For each $n\in \N$, let $\Lambda_n \in \B(\ell^2(V_n,m_n))$ be the operator from Lemma \ref{lem:Lambda operator}. Then the following are equivalent:
\begin{enumerate}
 \item $\{(V_n,E_n,m_n)\}_{n\in \N}$ is a sequence of measured expanders;
 \item There exists a constant $c>0$ such that the spectral gap of $(V_n,E_n,m_n)$ is bounded from below by $c$ for each $n\in \N$;
 \item There exist constants $c',\kappa>0$ such that the spectrum of $\Lambda_n$ is contained in $\{0\} \cup [c',\kappa]$ for each $n\in \N$.
\end{enumerate}
\end{thm}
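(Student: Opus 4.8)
The plan is to obtain both equivalences essentially for free from the single-graph estimates already established, with the only genuine work being to track the uniformity of constants across the sequence. Write $K$ for the uniform valency bound and $s\in(0,1)$ for the uniform measure-ratio bound, and for each $n$ abbreviate $c_n:=\ch(V_n,E_n,m_n)$ and let $\lambda_n$ denote the spectral gap of $(V_n,E_n,m_n)$. The equivalence (1)$\Leftrightarrow$(2) will follow directly from Theorem \ref{thm:measured expanders}, while (2)$\Leftrightarrow$(3) will follow from Lemma \ref{lem:Lambda operator} together with a uniform operator-norm bound on $\Lambda_n$.

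For (1)$\Leftrightarrow$(2), I would simply feed the uniform $K$ and $s$ into Theorem \ref{thm:measured expanders}, which yields
\[
\frac{s^3(1+s)}{2K^3}\,c_n^2 \leq \lambda_n \leq \frac{2(1+s)K}{s}\,c_n
\]
for every $n$ simultaneously. If (1) holds, say $\inf_n c_n=c_0>0$, the left inequality gives $\lambda_n\geq \frac{s^3(1+s)}{2K^3}c_0^2>0$ uniformly, which is (2); conversely, if $\inf_n\lambda_n=c>0$, the right inequality gives $c_n\geq \frac{s}{2(1+s)K}\lambda_n\geq\frac{sc}{2(1+s)K}>0$ uniformly, which is (1). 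No further computation is needed here, precisely because the constants depend only on $K$ and $s$.

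For (2)$\Leftrightarrow$(3), the key is Lemma \ref{lem:Lambda operator}, which identifies $\lambda_n$ with the smallest positive eigenvalue of $\Lambda_n=W_n^*\Delta_n W_n$, whose kernel consists of the constant functions so that $0$ always lies in the spectrum. A uniform lower bound $\lambda_n\geq c$ therefore places the positive part of the spectrum of $\Lambda_n$ inside $[c,\infty)$, and conversely a containment in $\{0\}\cup[c',\kappa]$ forces the smallest positive eigenvalue to be at least $c'$; so the only missing ingredient is the uniform upper bound $\kappa$. This is the one place requiring an estimate: I would bound $\|\Lambda_n\|\leq\|W_n\|^2\,\|\Delta_n\|$, use $\|\Delta_n\|\leq 2$, and invoke condition (3) of Lemma \ref{lem:auxiliary random walk}, namely $m_n(v)\geq\frac{s}{K(1+s)}\mu_n(v)$, which gives $\mu_n(v)\leq\frac{K(1+s)}{s}m_n(v)$ and hence $\|W_n\|^2\leq\frac{K(1+s)}{s}$. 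Thus $\|\Lambda_n\|\leq\frac{2K(1+s)}{s}=:\kappa$ uniformly, and one may take this $\kappa$ in (3).

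The proof has no serious obstacle; its whole content is the bookkeeping of uniform constants. The only step that is not immediate from the cited statements is the uniform upper bound on the spectrum, i.e.\ the estimate $\|\Lambda_n\|\leq\frac{2K(1+s)}{s}$, which I expect to be the main (and quite mild) point to verify carefully.
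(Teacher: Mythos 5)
Your proposal is correct and follows exactly the route the paper intends: the paper derives this theorem by applying Theorem \ref{thm:measured expanders} with the uniform constants $K$ and $s$ to get (1)$\Leftrightarrow$(2), and uses Lemma \ref{lem:Lambda operator} for (2)$\Leftrightarrow$(3). Your uniform bound $\|\Lambda_n\|\leq \|W_n\|^2\|\Delta_n\|\leq \frac{2K(1+s)}{s}$, obtained from $\|\Delta_n\|\leq 2$ and condition (3) of Lemma \ref{lem:auxiliary random walk}, correctly supplies the constant $\kappa$ that the paper leaves implicit.
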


\section{Generalised expanders}\label{Generalised expanders}

In this section, we relate measured expanders and their asymptotic version with the notion of generalised expanders. First recall that the notion of generalised expanders was introduced by Tessera in \cite{MR2649350} as a generalisation of expander graphs and as (the sole) obstruction to coarse embeddability into a Hilbert space. We will state in the case of general $L^p$-spaces for $p\in [1,\infty)$.

\begin{defn}[{\cite[Definition 16]{MR2649350}}]\label{defn:generalised expanders}
Let $p\in [1,\infty)$. A sequence of finite metric spaces $\{(X_n,d_n)\}_{n\in \N}$ is called a sequence of \emph{$L^p$-generalised expanders} if for every function $\rho_+: [0,\infty) \to [0,\infty)$, there exists a constant $k>0$ and a sequence $r_n \to \infty$ such that for each $n\in \N$ there exists a probability measure $\mu_n$ on $X_n \times X_n$ satisfying the following:
\begin{enumerate}
 \item $\mu_n(x,y) = \mu_n(y,x)$ for all $x,y\in X_n$;
 \item $\mu_n(x,y)=0$ if $d_n(x,y) \leq r_n$;
 \item for every measure space $(Z,\nu)$ and any map $f: X_n \to L^p(Z,\nu)$ satisfying $\|f(x)-f(y)\|_p \leq \rho_+(d_n(x,y))$ for any $x,y\in X_n$, the following inequality holds:
 \[
 \sum_{x,y\in X_n} \|f(x)-f(y)\|_p^p \mu_n(x,y) \leq k^p.
 \]
\end{enumerate}
\end{defn}

\begin{prop}[{\cite[Corollary 17]{MR2649350}}]\label{prop:generalised expanders}
Let $p\in [1,\infty)$. A metric space $X$ does not coarsely embed into any $L^p$-space \emph{if and only if} it has a coarsely embedded sequence of $L^p$-generalised expanders.
\end{prop}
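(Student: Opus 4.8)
The plan is to prove the two implications separately. The ``if'' direction (coarsely embedded generalised expanders obstruct embeddability) is elementary, via a direct averaging argument; the ``only if'' direction is the substantial one, where I would construct the expanders inside $X$, following Tessera's strategy in \cite{MR2649350}. I expect essentially all the difficulty to lie in the second implication.

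For the easy direction, suppose $\{(X_n,d_n)\}$ is a sequence of $L^p$-generalised expanders coarsely embedded in $X$ via maps $\phi_n\colon X_n\to X$ and (without loss of generality non-decreasing) control functions $\sigma_-,\sigma_+$ with $\sigma_-(t)\to\infty$. Assume for contradiction that $X$ coarsely embeds into some $L^p(Z,\nu)$ through $f$ with non-decreasing controls $\rho_-,\rho_+$, $\rho_-(t)\to\infty$. Then each $g_n:=f\circ\phi_n\colon X_n\to L^p(Z,\nu)$ satisfies $\|g_n(x)-g_n(y)\|_p\le\rho_+(\sigma_+(d_n(x,y)))$, so $g_n$ is controlled by $\tilde\rho_+:=\rho_+\circ\sigma_+$. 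Applying Definition \ref{defn:generalised expanders} with this $\tilde\rho_+$ yields $k>0$, $r_n\to\infty$, and symmetric probability measures $\mu_n$ supported on $\{(x,y):d_n(x,y)>r_n\}$ with $\sum_{x,y}\|g_n(x)-g_n(y)\|_p^p\,\mu_n(x,y)\le k^p$. On the support of $\mu_n$ we have $\|g_n(x)-g_n(y)\|_p\ge\rho_-(\sigma_-(r_n))$, so (as $\mu_n$ is a probability measure) $k^p\ge\rho_-(\sigma_-(r_n))^p$; since $\sigma_-(r_n)\to\infty$ forces the right-hand side to infinity, this is absurd.

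For the hard direction, assume $X$ does not coarsely embed into any $L^p$-space. Since coarse embeddability is separably determined, I would fix a separable subspace that still fails to embed and take finite subsets $F_1\subseteq F_2\subseteq\cdots$ exhausting it, each with the induced metric; as subspaces they embed isometrically, hence coarsely, so the whole content is to verify that $\{F_n\}$ is a sequence of $L^p$-generalised expanders. Fix a gauge $\rho_+$ and, for finite $F$ and $r\ge 0$, set
\[
m_p(F,r):=\inf_{\mu}\ \sup_{f}\ \sum_{x,y\in F}\|f(x)-f(y)\|_p^p\,\mu(x,y),
\]
with $\mu$ ranging over symmetric probability measures supported on $\{d>r\}$ and $f$ over $\rho_+$-controlled maps $F\to L^p$. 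The desired expander data at gauge $\rho_+$ exists exactly when one can choose $r_n\to\infty$ with $\sup_n m_p(F_n,r_n)<\infty$. To evaluate $m_p$ I would invoke Sion's minimax theorem: the functional is bilinear, the measures form a compact convex simplex, and the attainable matrices $\big(\|f(x)-f(y)\|_p^p\big)_{x,y}$ form a convex set, since for controlled $f,g$ the map $2^{-1/p}(f\oplus g)$ into $L^p\oplus_p L^p$ is again $\rho_+$-controlled and realises the average of their matrices. This gives $m_p(F,r)=\sup_f\min_{d(x,y)>r}\|f(x)-f(y)\|_p^p$, so that $m_p(F,r)^{1/p}$ is the largest margin by which a $\rho_+$-controlled $L^p$-map can separate all pairs at distance greater than $r$; consequently $\{F_n\}$ fails to be an expander at $\rho_+$ precisely when, for every $r_n\to\infty$, these optimal separations are unbounded in $n$.

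It then remains to show that such unbounded separations force a coarse embedding of $X$, contradicting the hypothesis. Here I would select, along scales tending to infinity and over cofinally large indices $n$, $\rho_+$-controlled maps on $F_n$ separating distant points by growing amounts, combine them by an $\ell^p$-weighted direct sum to obtain maps on each finite subset with a uniform upper gauge and a lower gauge tending to infinity, and finally glue these through the ultraproduct $\prod_\omega L^p$---which is again an $L^p$-space, the one genuinely Banach-space-theoretic input---into a single coarse embedding $X\hookrightarrow L^p$. The hard part is exactly this assembly: the separations are controlled only at large scales and vary with the subset, so producing one lower gauge $\rho_-$ with $\rho_-(t)\to\infty$ that is valid uniformly over the whole exhaustion, while simultaneously handling the ``for every $\rho_+$'' quantifier (which admits no countable cofinal family, ruling out a naive diagonalisation), is the delicate point; this is precisely where Tessera's compactness argument in \cite{MR2649350} does the decisive work.
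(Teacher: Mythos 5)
The paper does not actually prove this statement: it is imported verbatim from Tessera's work \cite{MR2649350}, so there is no internal proof to compare against, and I assess your argument on its own terms.

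Your ``if'' direction is correct. The core machinery of your ``only if'' direction is also sound: the minimax step is legitimate (the set of matrices $\bigl(\|f(x)-f(y)\|_p^p\bigr)_{x,y}$ is convex by the $2^{-1/p}(f\oplus g)$ trick, the admissible measures form a compact convex finite-dimensional simplex, and the pairing is bilinear, so Sion's theorem --- really just finite-dimensional convex duality --- collapses the inner infimum to a minimum over pairs at distance $>r$), and the assembly step works: for a \emph{fixed} nested exhaustion $F_1\subseteq F_2\subseteq\cdots$ of a countable space $Y$, failure of the expander condition at a single gauge $\rho_+$ yields, for every $k$, a scale $r(k)$ and $\rho_+$-controlled maps on \emph{every} $F_n$ separating by at least $k$ at distances $>r(k)$; an ultralimit over $n$ (ultraproducts of $L^p$-spaces are again $L^p$-spaces), followed by a sparse weighted $\ell^p$-sum over $k$, then coarsely embeds $Y$ into an $L^p$-space. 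Note that this makes the worry in your final paragraph unfounded: you never need to diagonalise over the uncountably many gauges, because for each \emph{single} gauge the failure already contradicts the gauge-free hypothesis that $Y$ does not embed; no compactness over the family of all $\rho_+$ is required once the exhaustion is fixed.

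The genuine gap is instead the opening move of the hard direction: ``coarse embeddability is separably determined'' is asserted as known, but it is not a standard quotable fact, and for uncountable $X$ it is exactly where the remaining content lies --- indeed it is a formal \emph{consequence} of the proposition you are proving, since the union of a coarsely embedded expander sequence is a countable non-embeddable subspace by your own easy direction. The claim is true, but it needs an argument, for instance: (i) normalise the control functions of a coarse embedding to be non-decreasing and continuous; (ii) observe that whether a finite subset admits a $(\rho_-,\rho_+)$-map into $L^p$ depends only on its distance matrix, and that this property is closed under entrywise limits of distance matrices (translate the maps to the origin and pass to an ultralimit, again using that ultraproducts of $L^p$-spaces are $L^p$-spaces); (iii) choose a countable $Y\subseteq X$ whose $n$-point distance matrices are dense in those of $X$ for every $n$; (iv) conclude via ultraproduct gluing that if $Y$ embeds then every finite subset of $X$ admits a map with the same controls, hence $X$ embeds. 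Without such a paragraph --- or without restricting the proposition to countable spaces (e.g.\ bounded geometry, which is all the present paper ever uses) --- your proof covers only separable $X$.
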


The main aim of this section is to show that ghostly measured asymptotic expanders, which is an asymptotic version of measured expanders introduced in \cite{measured_I}, are $L^p$-generalised expanders for any $p\in [1,\infty)$. Let us recall some notions. 

\begin{defn}[{\cite[Definition 2.4]{measured_I}}]\label{def:ghostly}
We say that $\{(X_n,d_n,m_n)\}_{n\in \N}$ is a sequence of \emph{finite measured metric spaces} if each $(X_n,d_n)$ is a finite metric space, and each $m_n$ is a non-trivial and finite measure defined on the $\sigma$-algebra of all subsets of $X_n$. Moreover, the sequence $\{(X_n,d_n,m_n)\}_{n\in \N}$ is called \emph{ghostly} if
\begin{align*}
\lim_{n\to \infty} \sup_{x\in X_n} \frac{m_n(x)}{m_n(X_n)}=0,
\end{align*}
and said to have \emph{uniformly bounded geometry} if for all $R>0$, we have 
$$\sup_{n\in \N}\sup_{x\in X_n}|B(x,R)|<\infty.$$
\end{defn}

Note that in the case of finite measured graphs, the above notions are compatible with those defined in Section \ref{sec:prel}. The following notion is the asymptotic version of measured expanders, which was introduced in \cite{dypartI} and extensively studied in \cite{measured_I} (see also \cite{structure,Intro}):

\begin{defn}[{\cite[Definition~6.1]{dypartI}}]\label{def: measured asymptotic expander}
A sequence of finite measured metric spaces $\{(X_n,d_n,m_n)\}_{n \in \N}$ is called a sequence of \emph{measured asymptotic expanders} if for any $\alpha \in (0,\frac{1}{2}]$ there exist $c_\alpha\in (0,1)$ and $R_\alpha>0$ such that for any $n \in \N$ and $A \subseteq X_n$ with $\alpha \cdot m_n(X_n) \leq m_n(A) \leq \frac{1}{2} m_n(X_n)$, we have $m_n(\partial_{R_\alpha} A) > c_\alpha\cdot m_n(A)$.

In this case, we call functions $\underbar{c}:\alpha \mapsto c_\alpha$ and $\underbar{R}:\alpha \mapsto R_\alpha$ from $(0,\frac{1}{2}]$ to $(0,\infty)$ \emph{parameter functions} of $\{(X_n,d_n,m_n)\}_{n \in \N}$, and $\{(X_n,d_n,m_n)\}_{n\in \N}$ is called a sequence of \emph{measured $(\underbar{c},\underbar{R})$-asymptotic expanders}.
\end{defn}

Clearly any measured expander is a measured asymptotic expander. A form of a converse is provided by the structure result from \cite{measured_I}, which we recall below. Roughly speaking, it shows that measured asymptotic expanders are necessarily ``exhausted'' by a sequence of measured expanders. It is also a key ingredient to prove the main result of this section.

\begin{prop}[{\cite[Corollary 4.20]{measured_I}}]\label{prop:structure thm from part I}
Let $\{(X_n,d_n,m_n)\}_{n \in \N}$ be a sequence of finite measured metric spaces with uniformly bounded
geometry. Then the following are equivalent:
\begin{enumerate}
  \item $\{(X_n,d_n,m_{n})\}_{n \in \N}$ is a sequence of measured asymptotic expanders;
  \item there exist $c>0$, a sequence $\{\alpha_k\}_{k \in \N}$ in $(0,1)$ with $\alpha_k\to 0$, a sequence $\{s_k\}_{k\in \N}$ in $(0,1)$, and a positive sequence $\{R_k\}_{k\in \N}$ such that for any $n, k \in \N$ there exist a finite graph $(V_{n,k},E_{n,k})$ and a $R_k$-Lipschitz injective map $i_{n,k}:V_{n,k} \to X_n$ satisfying the following:
\begin{itemize}
   \item[(i)] the pullback measure $m_{n,k}:=i_{n,k}^*(m_n)$ on $V_{n,k}$ has full support and $m_{n,k}(V_{n,k}) \geq (1-\alpha_k)\cdot m_n(X_n)$;
    \item[(ii)] for each $k\in \N$, $\{(V_{n,k},E_{n,k},m_{n,k})\}_{n\in \N}$ is a sequence of measured expanders with uniformly bounded valency and Cheeger constants bounded below by $c$;
    \item[(iii)] for any adjacent vertices $u\thicksim_{E_{n,k}} v$, we have $s_k m_{n,k}(u) \leq m_{n,k}(v) \leq \frac{1}{s_k}m_{n,k}(u)$.
      \end{itemize}
\end{enumerate}
\end{prop}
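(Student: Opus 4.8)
The final statement to prove is Proposition~\ref{prop:structure thm from part I}, the structure theorem, which characterises measured asymptotic expanders via exhaustion by measured expander graphs with bounded measure ratio. Since this is cited as \cite[Corollary 4.20]{measured_I}, the plan is to reduce it to the core asymptotic-expansion estimate and then build the exhausting graphs by hand.

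\textbf{Plan.} The implication (2)$\Rightarrow$(1) is the easier direction, so I would do it first. Given the data in (2), fix $\alpha\in(0,\tfrac12]$ and choose $k$ large enough that $\alpha_k<\alpha/2$. For any $A\subseteq X_n$ with $\alpha\cdot m_n(X_n)\leq m_n(A)\leq\tfrac12 m_n(X_n)$, I would transport $A$ to $A':=i_{n,k}^{-1}(A)\subseteq V_{n,k}$. Since $m_{n,k}(V_{n,k})\geq(1-\alpha_k)m_n(X_n)$, the set $A'$ still carries a definite proportion of the mass, so $m_{n,k}(A')\geq(\alpha-\alpha_k)m_n(X_n)\geq\tfrac{\alpha}{2}m_{n,k}(V_{n,k})$ after adjusting constants. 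Applying the uniform Cheeger bound from (ii) gives $m_{n,k}(\partial^V A')\geq c\cdot m_{n,k}(A')$ (using the connectedness/expansion of the graph), and then pushing the vertex boundary forward under the $R_k$-Lipschitz map $i_{n,k}$ lands it inside $\partial_{R_k}A$ in $X_n$. Tracking the constants through these transport steps produces the required $c_\alpha$ and $R_\alpha=R_k$, establishing asymptotic expansion.

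\textbf{The main direction.} The hard part is (1)$\Rightarrow$(2), where one must \emph{construct} the graphs $(V_{n,k},E_{n,k})$ together with the measures and Lipschitz maps realising all of (i)--(iii) simultaneously. For each scale $k$ I would fix the parameters $\alpha_k\to0$ and let $R_k$ be governed by the parameter function $\underbar{R}$ evaluated at $\alpha_k$ (or a related threshold). The natural candidate is to take $V_{n,k}$ to be (a large subset of) $X_n$ and to declare $u\thicksim_{E_{n,k}}v$ precisely when $0<d_n(u,v)\leq R_k$, so that the identity-type inclusion $i_{n,k}$ is automatically $R_k$-Lipschitz and injective; the pullback measure $m_{n,k}$ is then just the restriction of $m_n$. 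The real difficulty is guaranteeing \emph{full support} together with the \emph{bounded measure ratio} (iii): points of tiny or vanishing measure, and adjacent points whose masses differ wildly, must be excised. I would therefore discard the low-mass vertices and the ``unbalanced'' edges, and the crux is to show that this pruning removes only an $\alpha_k$-fraction of the total mass (to keep (i)) while preserving the uniform Cheeger bound in (ii). This last preservation step is where the asymptotic-expander hypothesis must be used quantitatively: the expansion at scale $R_{\alpha}$ for all small $\alpha$ forces the pruned graph to still expand, after absorbing the Lipschitz constant and the bounded-ratio constant $s_k$ into a uniform lower bound $c$.

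\textbf{Assembling the constants.} Finally I would verify that the three output sequences $\{\alpha_k\}$, $\{s_k\}$, $\{R_k\}$ can be chosen independently of $n$, which is where uniformly bounded geometry enters: it ensures that the number of neighbours within radius $R_k$ is uniformly bounded, so the constructed graphs automatically have uniformly bounded valency (as demanded in (ii)), and it lets the ratio-pruning be carried out uniformly. I expect the chief obstacle to be the simultaneous bookkeeping in (1)$\Rightarrow$(2): each of full support, mass retention $(1-\alpha_k)$, bounded measure ratio $s_k$, bounded valency, and the uniform Cheeger bound $c$ pulls the construction in a slightly different direction, and the delicate point is arranging the pruning thresholds so that sharpening one constant does not destroy another. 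Once a compatible choice of thresholds is fixed, each individual verification reduces to the transport estimates already used in the easy direction, so the proposition follows.
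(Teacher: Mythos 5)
First, a point of order: the paper does not prove this proposition at all — it is imported verbatim as \cite[Corollary 4.20]{measured_I} and used as a black box (it is the key input to Theorem \ref{thm:generalised expanders from random walks}). So there is no proof in this paper to compare against, and your proposal must stand on its own; as written, it does not. Even the ``easy'' direction (2)$\Rightarrow$(1) has a genuine (though fixable) gap: the Cheeger constant of $(V_{n,k},E_{n,k},m_{n,k})$ only controls sets $A'$ with $m_{n,k}(A')\leq\tfrac12 m_{n,k}(V_{n,k})$, and you verify only the \emph{lower} bound $m_{n,k}(A')\geq(\alpha-\alpha_k)m_n(X_n)$. The required upper bound can fail: from $m_n(A)\leq\tfrac12 m_n(X_n)$ and $m_{n,k}(V_{n,k})\geq(1-\alpha_k)m_n(X_n)$ one only gets $m_{n,k}(A')\leq\tfrac{1}{2(1-\alpha_k)}\,m_{n,k}(V_{n,k})$, which is strictly more than half. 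So ``applying the uniform Cheeger bound from (ii)'' is not justified as stated; one must separately treat sets of measure slightly above one half, e.g.\ by applying expansion to the complement and converting inner boundary into outer boundary using the bounded valency $K$ and the measure-ratio bound $s_k$ (losing a factor of order $s_k/K$), or by invoking an expansion estimate for sets of measure up to $3/4$ of the kind the paper itself uses in Case 4 of the proof of Lemma \ref{lem:example 2}.

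The main direction (1)$\Rightarrow$(2) is not proved in your proposal: you name the natural construction (a Rips-type graph at scale $R_k$ on a pruned subset of $X_n$, with the restricted measure) and then list the difficulties — choosing pruning thresholds, retaining $(1-\alpha_k)$ of the mass, securing full support and measure ratio $s_k$, and above all preserving a uniform Cheeger constant $c$ independent of $n$ and $k$ — without resolving any of them. These are not bookkeeping issues. Discarding ``low-mass vertices'' below a threshold can delete a large fraction of the total measure (many tiny atoms can carry most of the mass), so the choice of thresholds needs a genuine argument, typically a dyadic decomposition of the measure values plus a pigeonhole selection; and excising a positive-measure set from a measured expander does \emph{not} obviously preserve expansion, so recovering a uniform Cheeger bound for the pruned graph from asymptotic expansion at all scales requires the quantitative multi-scale hypothesis in an essential way. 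That recovery is precisely the content of the cited structure theorem in \cite{measured_I} — it is the main technical result of that paper, not a transport estimate — so your proposal, at this point, restates the theorem's difficulty rather than overcoming it.
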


The following is the main result of this section:
\begin{thm}\label{thm:generalised expanders from random walks}
Let $\{(X_n,d_n,m_n)\}_{n \in \N}$ be a sequence of ghostly measured asymptotic expanders with uniformly bounded geometry.
Then for every $p\in [1,\infty)$, the sequence $\{(X_n,d_n,m_n)\}_{n \in \N}$ is a sequence of $L^p$-generalised expanders.
\end{thm}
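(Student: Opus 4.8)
The plan is to reduce the theorem to the structure result in Proposition~\ref{prop:structure thm from part I}, and then to build the required probability measures $\mu_n$ on $X_n\times X_n$ by transporting the $L^p$-Poincar\'{e} inequalities for the exhausting measured expanders. Fix $p\in[1,\infty)$ and an arbitrary profile $\rho_+\colon[0,\infty)\to[0,\infty)$. By Proposition~\ref{prop:structure thm from part I}, since $\{(X_n,d_n,m_n)\}$ is a sequence of measured asymptotic expanders with uniformly bounded geometry, we obtain for each $k$ a sequence of measured expanders $\{(V_{n,k},E_{n,k},m_{n,k})\}_{n\in\N}$ with uniformly bounded valency $K_k$, Cheeger constants bounded below by a fixed $c>0$, and (by property (iii)) measure ratio bounded by $s_k$. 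The point is that each such sequence satisfies, via Proposition~\ref{prop:Poincare for measured expanders lp}, an $L^p$-Poincar\'{e} inequality of the form \eqref{EQ:lp Poincare for measured expanders} with a constant $c_p^{(k)}>0$ depending on $c,s_k,p,K_k$.

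Next I would convert each such $L^p$-Poincar\'{e} inequality into the generalised-expander inequality. For fixed $k$, define a symmetric probability measure on $V_{n,k}\times V_{n,k}$ by
\[
\nu_{n,k}(u,v):=\frac{1}{m_{n,k}(V_{n,k})}\cdot\frac{m_{n,k}(u)m_{n,k}(v)}{m_{n,k}(V_{n,k})},
\]
which is the normalised product measure appearing on the right-hand side of \eqref{EQ:lp Poincare for measured expanders}. Given any $f\colon V_{n,k}\to L^p(Z,\nu)$ with $\|f(u)-f(v)\|_p\le\rho_+(d_n(i_{n,k}(u),i_{n,k}(v)))$, the left-hand side of \eqref{EQ:lp Poincare for measured expanders} is controlled: since $i_{n,k}$ is $R_k$-Lipschitz, adjacent vertices map to points within distance $R_k$, so each term $\|f(u)-f(v)\|_p^p$ on an edge is at most $\rho_+(R_k)^p$, and summing against $(m_{n,k}(u)+m_{n,k}(v))$ over edges gives a bound in terms of $\rho_+(R_k)^p$ and the total edge-weight, which is comparable to $m_{n,k}(V_{n,k})$ by bounded valency. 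After normalising, \eqref{EQ:lp Poincare for measured expanders} yields a uniform (in $n$) upper bound $\sum_{u,v}\|f(u)-f(v)\|_p^p\,\nu_{n,k}(u,v)\le (k_p^{(k)})^p$ for functions whose pairwise distortion is controlled by $\rho_+$; this is exactly the shape of Definition~\ref{defn:generalised expanders}(3) but living on $V_{n,k}$ rather than $X_n$.

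Then I would push $\nu_{n,k}$ forward to $X_n$ via $i_{n,k}$ and arrange the two remaining requirements of Definition~\ref{defn:generalised expanders}, namely the support-away-from-the-diagonal condition (2) and $r_n\to\infty$. The pushforward $\mu_n:=(i_{n,k}\times i_{n,k})_*\nu_{n,k}$ is a symmetric probability measure on $X_n\times X_n$, and for a map $f\colon X_n\to L^p(Z,\nu)$ with $\|f(x)-f(y)\|_p\le\rho_+(d_n(x,y))$ the composition $f\circ i_{n,k}$ meets the hypothesis above, so condition (3) transfers directly. The genuine work is condition (2): the product measure $\nu_{n,k}$ charges the diagonal, so I must excise near-diagonal mass. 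Here the ghostliness hypothesis is essential: $\sup_x m_n(x)/m_n(X_n)\to 0$ forces the total $\nu_{n,k}$-mass on any fixed-radius neighbourhood of the diagonal to be small, uniformly in the choice of basepoint, because each fibre $\{v: d(u,v)\le r\}$ has uniformly bounded cardinality (bounded geometry) and each point carries vanishing relative measure. Thus I would restrict $\nu_{n,k}$ to pairs with $d_n(i_{n,k}(u),i_{n,k}(v))>r$, renormalise, and check that for suitable $r=r_n\to\infty$ the renormalisation constant stays bounded away from $0$, so the upper bound in (3) is only inflated by a bounded factor. Finally, a diagonalisation over $k$ — letting $k=k(n)\to\infty$ slowly so that $R_{k(n)}$, and hence $r_n$, tends to infinity while the constants $c_p^{(k(n))}$ remain usable — produces a single sequence $\{\mu_n\}$ with the right $r_n\to\infty$ and a uniform constant $k$. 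The main obstacle is precisely this balancing act in condition (2): simultaneously making $r_n\to\infty$, keeping the diagonal-excision loss bounded using ghostliness, and threading the $k$-dependence of the Poincar\'{e} constants through a diagonal argument so that the final constant $k$ in Definition~\ref{defn:generalised expanders} is independent of $n$.
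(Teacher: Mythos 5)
Your overall strategy --- exhaust via Proposition \ref{prop:structure thm from part I}, run the $L^p$-Poincar\'e inequality of Proposition \ref{prop:Poincare for measured expanders lp} on the exhausting measured expanders, push the resulting product measure forward to $X_n$, and excise a neighbourhood of the diagonal using ghostliness and bounded geometry --- is exactly the paper's. The genuine gap is your final step, the ``diagonalisation over $k$''. Its premise is a non sequitur: the separation radii $r_n$ in Definition \ref{defn:generalised expanders}(2) have nothing to do with the Lipschitz constants $R_k$ of the embeddings $i_{n,k}$, so there is no sense in which ``$R_{k(n)}$, and hence $r_n$, tends to infinity''. The radii $r_n\to\infty$ come from ghostliness alone: writing $\gamma_n:=\sup_{x\in X_n} m_n(x)/m_n(X_n)\to 0$, the product mass of $\{(x,y):d_n(x,y)\leq r\}$ is at most (a constant times) $N_r\,\gamma_n$ for each \emph{fixed} $r$, where $N_r=\sup_n\sup_x|B(x,r)|<\infty$ by bounded geometry; hence one may let $r_n\to\infty$ slowly while keeping the excised mass below half of the total (the paper makes this explicit by taking $r_n=\log_K(1/(8\gamma_n))$ for a single fixed exhaustion level). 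No growth of $k$ is needed, or helpful, for this.

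More seriously, letting $k=k(n)\to\infty$ destroys the uniformity of the constant demanded by Definition \ref{defn:generalised expanders}(3). Proposition \ref{prop:structure thm from part I} gives no control on how $s_k$, $K_k$ and $R_k$ behave as $k\to\infty$; generically $s_k\to 0$, $K_k\to\infty$ and $R_k\to\infty$, so the Poincar\'e constants $c_p^{(k)}$ (which depend on $c,s_k,p,K_k$) degenerate and $\rho_+(R_k)$ blows up. The bound your argument produces at level $k$ is of the order $\rho_+(R_k)^p\,K_k(1+s_k^{-1})/c_p^{(k)}$, so along any sequence $k(n)\to\infty$ it need not be bounded; ``slowness'' of $k(n)$ cannot repair this, because the definition requires one constant valid for every $n$ simultaneously. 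The fix is simply not to diagonalise: fix a single $k$ with $\alpha_k\leq\frac12$, so that $m_{n,k}(V_{n,k})\geq\frac12 m_n(X_n)$ for all $n$. (This lower bound, which you never invoke, is also what keeps your renormalisation factor bounded: ghostliness is relative to $m_n(X_n)$, while your measure $\nu_{n,k}$ is normalised by $m_{n,k}(V_{n,k})^2$, so you need $m_n(i_{n,k}(V_{n,k}))$ bounded below by a definite fraction of $m_n(X_n)$.) With $k$ fixed, all of $c$, $s_k$, $K_k$, $R_k$ are uniform in $n$, ghostliness alone supplies $r_n\to\infty$, and the rest of your argument goes through; this is precisely what the paper does.
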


We need the following lemma:

\begin{lem}\label{lem:Cheeger to Poincare measured case}
Let $(V,E,m)$ be a finite measured graph with valency bounded by $K$, measure ratio bounded by $s\in (0,1)$ and (positive) Cheeger constant $c$. Let $p\in [1,\infty)$ and $\rho_+: [0,\infty) \to [0,\infty)$ be a function. Then there exists a constant $\kappa$ depending on $K,s,c,p,\rho_+$ such that for any measure space $(Z,\nu)$ and any map $f: V \to L^p(Z,\nu)$ with $\|f(u)-f(v)\|_p \leq \rho_+ (d(u,v))$ for every $u,v\in V$ (where $d$ is the edge-path metric on $V$), we have the following inequality:
\[
\sum_{u,v\in V} \|f(u)-f(v)\|_p^p m(u)m(v) \leq \kappa \cdot m(V)^2.
\]
\end{lem}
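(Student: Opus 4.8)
The plan is to deduce the estimate directly from the scalar $L^p$-Poincar\'{e} inequality of Proposition~\ref{prop:Poincare for measured expanders lp}, which supplies a constant $c_p>0$ depending only on $c,s,p,K$. First I would upgrade that inequality from $\C$-valued to $L^p(Z,\nu)$-valued functions. Given $f:V\to L^p(Z,\nu)$, for each $z\in Z$ put $f_z(v):=f(v)(z)\in\C$ and apply (\ref{EQ:lp Poincare for measured expanders}) to the scalar function $f_z$; since $V$ is finite and each $f(v)\in L^p(Z,\nu)$, every term is $\nu$-integrable, so integrating in $z$ and using $\int_Z|f_z(u)-f_z(v)|^p\,d\nu=\|f(u)-f(v)\|_p^p$ gives
\[
\sum_{u,v\in V:\,u\thicksim_E v}\|f(u)-f(v)\|_p^p\,(m(u)+m(v))\ \geq\ c_p\sum_{u,v\in V}\|f(u)-f(v)\|_p^p\,\frac{m(u)m(v)}{m(V)}.
\]
This is the exact vector-valued analogue of the passage from (\ref{EQ4}) to (\ref{EQ5}).

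Next I would rearrange this so that the quantity to be bounded, namely $\sum_{u,v}\|f(u)-f(v)\|_p^p m(u)m(v)$, appears on the left, bounded by $\tfrac{m(V)}{c_p}$ times the edge sum. The edge sum is then controlled by the two remaining hypotheses. Because $d$ is the edge-path metric, every adjacent pair has $d(u,v)=1$, so the hypothesis on $f$ yields $\|f(u)-f(v)\|_p\leq\rho_+(1)$ and hence $\|f(u)-f(v)\|_p^p\leq\rho_+(1)^p$ on each edge. Pulling this constant out reduces the edge sum to $\rho_+(1)^p\sum_{u\thicksim_E v}(m(u)+m(v))$, and since the ordered adjacency sum equals $2\sum_{u\in V}\val(u)\,m(u)$, the valency bound gives $\sum_{u\thicksim_E v}(m(u)+m(v))\leq 2K\,m(V)$. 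Combining the three estimates yields
\[
\sum_{u,v\in V}\|f(u)-f(v)\|_p^p\,m(u)m(v)\ \leq\ \frac{2K\rho_+(1)^p}{c_p}\,m(V)^2,
\]
so the lemma holds with $\kappa:=2K\rho_+(1)^p/c_p$, which depends only on $K,s,c,p$ and the single value $\rho_+(1)$.

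There is no deep obstacle here; the only point deserving care is the vector-valued lift of the Poincar\'{e} inequality, which I would justify exactly as above by applying the scalar inequality pointwise in $z$ and integrating. I would also remark that, somewhat surprisingly, only $\rho_+(1)$ enters $\kappa$: the long-range behaviour of $\rho_+$ plays no role for a fixed graph, because the Poincar\'{e} inequality converts every global difference into nearest-neighbour differences, each of which is bounded by $\rho_+(1)$. The general shape of $\rho_+$ re-enters only in Theorem~\ref{thm:generalised expanders from random walks}, through the $R_k$-Lipschitz maps of Proposition~\ref{prop:structure thm from part I} relating the edge-path metrics of the approximating graphs to $d_n$.
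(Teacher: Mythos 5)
Your proof is correct and follows essentially the same route as the paper: apply the scalar $L^p$-Poincar\'{e} inequality of Proposition~\ref{prop:Poincare for measured expanders lp} coordinatewise and integrate over $Z$, bound each edge term by $\rho_+(1)^p$ since adjacent vertices are at distance $1$, and control the adjacency sum via the valency bound. The only (harmless) difference is the final counting step: the paper uses the measure ratio to bound $\sum_{v\thicksim u}(m(u)+m(v)) \leq K(1+\frac{1}{s})m(u)$, yielding $\kappa = \rho_+(1)^p K(1+s)/(s c_p)$, whereas your double-counting identity gives $2K\,m(V)$ directly and hence the slightly sharper $\kappa = 2K\rho_+(1)^p/c_p$, with $s$ entering only through $c_p$.
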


\begin{proof}
Applying Proposition \ref{prop:Poincare for measured expanders lp} and the Poincar\'{e} inequality (\ref{EQ:lp Poincare for measured expanders}) to each of the coordinate function $f_x(v):=f(v)(x)$ and integrating these inequalities over $X$, we obtain that there exists a constant $c_p>0$ depending on $K,s,c,p$ such that:
\begin{align*}
&c_p \sum_{u,v\in V} \|f(u)-f(v)\|_p^p \cdot \frac{m(u)m(v)}{m(V)} \leq \sum_{u,v\in V: u\thicksim v}\|f(u)-f(v)\|_p^p (m(u) + m(v)) \\
&\leq \rho_+(1)^p \sum_{u\in V} \sum_{v\in V: v\thicksim u} \Big(m(u) + m(v)\Big) \leq \rho_+(1)^p \sum_{u\in V} K(1+\frac{1}{s})m(u) = \frac{\rho_+(1)^pK(1+s)}{s}m(V).
\end{align*}
Taking $\kappa:= \frac{\rho_+(1)^pK(1+s)}{sc_p}$, we conclude the proof.
\end{proof}

\begin{proof}[Proof of Theorem \ref{thm:generalised expanders from random walks}]
We fix a $p \in [1,\infty)$ and a function $\rho_+:[0,\infty) \to [0,\infty)$. Without loss of generality, we assume that each $m_n$ is a probability measure on $X_n$. Set $\gamma_n:=\sup_{x\in X_n} m_n(x)$, which tends to $0$ as $n\to \infty$. By Proposition \ref{prop:structure thm from part I}, there exist $c>0$, $R>0$, $K>0$ and $s\in (0,1)$ such that for any $n\in \N$ there exist a finite graph $(V_n,E_n)$ and an $R$-Lipschitz injective map $i_n:V_n \to X_n$ satisfying the following:
\begin{itemize}
   \item[(i)] the measure $m'_n:=i_n^*(m_n)$ on $V_n$ has full support and $m'_n(V_n) \geq \frac{1}{2}$;
   \item[(ii)] $\{(V_n,E_n,m'_n)\}_{n\in \N}$ is a sequence of measured expanders with valency bounded by $K$, Cheeger constants above $c$ and measure ratio bounded by $s$.
\end{itemize}

For each $n\in \N$, set $Y_n:=i_n(V_n) \subseteq X_n$, define a measure $\mu_n$ on $X_n \times X_n$ by
\begin{equation*}
\mu_n(x,y):=
\begin{cases}
  ~m_n(x)m_n(y), & \mbox{if~} x,y\in Y_n; \\
  ~0, & \mbox{otherwise}
\end{cases}
\end{equation*}
and let $r_n:=\log_K(\frac{1}{8\gamma_n})$. Note that $\gamma_n \to 0$, hence $r_n \to \infty$. Finally, denote $\Delta_n:=\{(x,y)\in X_n \times X_n: d_n(x,y)\leq r_n\}$. Then we have:
\begin{align*}
\mu_n(\Delta_n)&=\sum_{(x,y)\in \Delta_n} \mu_n(x,y) = \sum_{\substack{(x,y)\in Y_n \times Y_n\\d_n(x,y) \leq r_n}} m_n(x)m_n(y) = \sum_{x\in Y_n} m_n(x)\big( \sum_{\substack{y\in Y_n\\d_n(x,y) \leq r_n}} m_n(y)\big)\\
&\leq \sum_{x\in Y_n} m_n(x) \cdot K^{r_n} \cdot \gamma_n \leq m_n(Y_n) \cdot \frac{1}{8} \leq \frac{1}{8}.
\end{align*}
On the other hand, we have:
\[
\mu_n(X_n \times X_n) = \sum_{(u,v)\in Y_n \times Y_n} m_n(u)m_n(v) = m_n(Y_n)^2 \geq \frac{1}{4}.
\]
Hence we obtain:
\[
\mu_n\big( (X_n \times X_n) \setminus \Delta_n \big) \geq \frac{1}{4} - \frac{1}{8} = \frac{1}{8}.
\]

Now we consider another measure $\nu_n$ on $X_n \times X_n$ defined by
\begin{equation*}
\nu_n(x,y):=
\begin{cases}
  ~\frac{\mu_n(x,y)}{\mu_n( (X_n \times X_n) \setminus \Delta_n )}, & \mbox{if~} (x,y)\notin \Delta_n; \\
  ~0, & \mbox{otherwise.}
\end{cases}
\end{equation*}
It is clear that $\nu_n$ is a symmetric probability measure such that $\nu_n(x,y)=0$ if $d_n(x,y) \leq r_n$. Furthermore, for any measure space $(Z,\nu)$ and any map $f_n: X_n \to L^p(Z,\nu)$ satisfying $\|f_n(x)-f_n(y)\|_p \leq \rho_+(d_n(x,y))$, the composition $f_n\circ i_n:V_n \to \H$ satisfies that $\|f_n\circ i_n(x)-f_n\circ i_n(y)\|_p \leq \rho_+(Rd_n(x,y))$. Hence from Lemma \ref{lem:Cheeger to Poincare measured case}, there exists a constant $\kappa>0$ depending only on $K,s,c,p,R,\rho_+$ such that:
\begin{align*}
\sum_{x,y\in X_n}& \|f_n(x)-f_n(y)\|_p^p \nu_n(x,y) \leq 8\sum_{x,y \in X_n} \|f_n(x)-f_n(y)\|_p^p \mu_n(x,y)\\
& = 8\sum_{x,y\in Y_n} \|f_n(x)-f_n(y)\|_p^p m_n(x)m_n(y) \\
& =  8\sum_{u,v\in V_n} \|f_n\circ i_n(u)-f_n\circ i_n(v)\|_p^p m'_n(u)m'_n(v) \\
& \leq 8\kappa m'_n(V_n) = 8\kappa m_n(Y_n) \leq 8\kappa.
\end{align*}
This concludes the proof.
\end{proof}

Since measured expanders are measured asymptotic expanders, we obtain an immediate corollary:

\begin{cor}\label{cor:generalised expanders from random walk}
Let $\{(V_n,E_n,m_n)\}_{n \in \N}$ be a sequence of ghostly measured expanders with uniformly bounded valency.
Then $\{(V_n,d_n,m_n)\}_{n \in \N}$ is a sequence of $L^p$-generalised expanders for each $p\in [1,\infty)$ where $d_n$ is the edge-path metric on $V_n$.
\end{cor}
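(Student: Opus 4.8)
The plan is to deduce this directly from Theorem~\ref{thm:generalised expanders from random walks}, since that result already establishes the conclusion for ghostly measured asymptotic expanders with uniformly bounded geometry. Thus it suffices to verify that a sequence of ghostly measured expanders with uniformly bounded valency meets these three hypotheses once each $V_n$ is equipped with its edge-path metric $d_n$. Ghostliness transfers verbatim: the defining condition \eqref{ghostly} for a measured graph sequence is precisely the ghostliness condition of Definition~\ref{def:ghostly} applied to $\{(V_n,d_n,m_n)\}_{n\in\N}$.

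Next I would check that uniformly bounded valency yields uniformly bounded geometry. If every vertex in every $V_n$ has valency at most $K$, then for any $R>0$, any centre $v$, and any $n$, a standard counting argument gives $|B(v,R)|\le \sum_{j=0}^{R} K^j$, a bound independent of $n$ and $v$. Hence $\sup_{n}\sup_{v}|B(v,R)|<\infty$ for all $R$, which is exactly uniformly bounded geometry in the sense of Definition~\ref{def:ghostly}.

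The remaining point is that measured expanders are measured asymptotic expanders. By Definition~\ref{defn:measured expanders} there is $c_0:=\inf_{n}\mathrm{ch}(V_n,E_n,m_n)>0$, so by \eqref{EQ:measured cheeger} we have $m_n(\partial^V A)\ge c_0\, m_n(A)$ for every $n$ and every $A\subseteq V_n$ with $0<m_n(A)\le\frac12 m_n(V_n)$. Since the vertex boundary $\partial^V A$ coincides with $\partial_1 A$ in the edge-path metric, choosing the constant parameter functions $\underbar{c}\equiv c_0/2$ and $\underbar{R}\equiv 1$ fulfils the requirement of Definition~\ref{def: measured asymptotic expander}, the strict inequality $m_n(\partial_1 A)>\frac{c_0}{2}m_n(A)$ following from $m_n(\partial^V A)\ge c_0\, m_n(A)$. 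With all three hypotheses in place, Theorem~\ref{thm:generalised expanders from random walks} applies and delivers the claim for each $p\in[1,\infty)$.

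I do not expect a genuine obstacle here; the content is entirely a matter of matching definitions. The only mild care needed is in the ball-size count that converts bounded valency into bounded geometry, and in the identification $\partial_1 A=\partial^V A$ that lets the Cheeger bound play the role of the asymptotic-expander inequality.
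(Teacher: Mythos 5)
Your proposal is correct and follows exactly the paper's route: the paper derives this corollary immediately from Theorem~\ref{thm:generalised expanders from random walks} by noting that measured expanders are measured asymptotic expanders, and you simply spell out the routine verifications (ghostliness transfers, bounded valency gives bounded geometry via the ball count $|B(v,R)|\le\sum_{j=0}^{R}K^j$, and the Cheeger bound yields constant parameter functions, with the halving $c_0/2$ correctly handling the strict inequality in Definition~\ref{def: measured asymptotic expander}).
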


Combining with Proposition \ref{prop:generalised expanders}, we obtain the following corollary, which recovers \cite[Corollary 7.2]{measured_I}:

\begin{cor}\label{cor:non CE}
Let $\{(X_n,d_n,m_n)\}_{n\in \N}$ be a sequence of ghostly measured asymptotic expanders with uniformly bounded geometry.
Then the coarse disjoint union of $\{(X_n,d_n)\}$ does not coarsely embed into any $L^p$-space for $p\in [1,\infty)$.
\end{cor}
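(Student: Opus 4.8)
The final statement to prove is Corollary \ref{cor:non CE}, which combines Theorem \ref{thm:generalised expanders from random walks} with Tessera's characterization in Proposition \ref{prop:generalised expanders}.

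\medskip

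My plan is to reduce the corollary to the two main results already established. First I would observe that the hypotheses of Corollary \ref{cor:non CE} are precisely those of Theorem \ref{thm:generalised expanders from random walks}: we are given a sequence $\{(X_n,d_n,m_n)\}_{n\in\N}$ of ghostly measured asymptotic expanders with uniformly bounded geometry. Therefore Theorem \ref{thm:generalised expanders from random walks} applies directly and tells us that, for every $p\in[1,\infty)$, the sequence $\{(X_n,d_n,m_n)\}_{n\in\N}$ is a sequence of $L^p$-generalised expanders in the sense of Definition \ref{defn:generalised expanders}.

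\medskip

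The second step is to invoke Proposition \ref{prop:generalised expanders}. That proposition characterizes non-coarse-embeddability: a metric space $X$ fails to coarsely embed into any $L^p$-space \emph{if and only if} it contains a coarsely embedded sequence of $L^p$-generalised expanders. Here I would take $X$ to be the coarse disjoint union of $\{(X_n,d_n)\}_{n\in\N}$. The key point requiring verification is that the sequence $\{(X_n,d_n)\}_{n\in\N}$ is itself coarsely embedded into its own coarse disjoint union. This is essentially built into the definition of the coarse disjoint union: each $X_n$ sits isometrically inside the disjoint union (the metric restricts to $d_n$ on each piece), and the pieces are pushed infinitely far apart, so the inclusion of the sequence is a coarse embedding. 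Once this is noted, Proposition \ref{prop:generalised expanders} immediately yields that the coarse disjoint union does not coarsely embed into any $L^p$-space for the fixed $p$.

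\medskip

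Finally I would remark that $p\in[1,\infty)$ was arbitrary, so the conclusion holds for all such $p$ simultaneously, as claimed. I expect the only genuine subtlety — and the step most worth stating carefully — to be the identification of $\{(X_n,d_n)\}$ as a coarsely embedded subsequence of its coarse disjoint union, since the rest is a formal chaining of Theorem \ref{thm:generalised expanders from random walks} and Proposition \ref{prop:generalised expanders}. No new estimates are needed beyond what those results already provide.
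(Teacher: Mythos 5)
Your proposal is correct and follows exactly the paper's own route: the paper derives Corollary \ref{cor:non CE} precisely by combining Theorem \ref{thm:generalised expanders from random walks} with Proposition \ref{prop:generalised expanders}, with the (standard) observation that each $(X_n,d_n)$ sits coarsely embedded in the coarse disjoint union. Your explicit attention to that last point is a reasonable elaboration of what the paper leaves implicit, but it is not a different argument.
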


\section{Examples}

In this final section, we present some examples to explore the relation between classical expanders and the measured ones.

Recall that for a residually finite group with Kazhdan's property (T), its box spaces form sequences of expanders. This is the type of the first known explicit constructions of expanders due to Margulis \cite{MR0484767}. However, the following results show that on such expanders, one can always choose ghostly measures such that the associated measured graphs do not form a sequence of measured expanders.

The following Proposition was pointed out to us by G\'abor Elek:

\begin{prop}\label{prop:example 1}
Let $G$ be a residually finite group and $G=N_0 \rhd N_1 \rhd N_2 \rhd \ldots$ be a sequence of finite index normal subgroups with trivial intersection. Then for any $n\in \N$ there exists a probability measure $m_{n}$ on $G/N_n$ (without full support) such that the sequence $\{(G/N_n, m_n)\}_n$ is ghostly but not a sequence of measured expanders.
\end{prop}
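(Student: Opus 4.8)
The plan is to exploit the observation recorded in the remark following \eqref{EQ:measured cheeger}: the measured Cheeger constant vanishes as soon as the support of the measure fails to be connected in the ambient graph. Thus it suffices to equip each $G/N_n$ (viewed as a Cayley graph with respect to a fixed finite generating set $S$ of $G$, with edge set $E_n$ and valency bounded by $d:=|S|$) with a ghostly probability measure whose support splits into two pieces of equal mass separated by a measure-zero ``moat''. Before constructing it, I would record that $N_n:=|G/N_n|\to\infty$: the indices $[G:N_n]$ are non-decreasing, so if they were bounded they would be eventually constant, forcing $N_{n_0}=N_{n_0+1}=\cdots$ and hence $\bigcap_n N_n=N_{n_0}$, a finite-index (thus infinite) proper subgroup of the infinite group $G$, contradicting $\bigcap_n N_n=\{e\}$.

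Next I would set $\epsilon:=\frac{1}{2(d+1)}$ and choose any subset $B_n\subseteq G/N_n$ with $|B_n|=\lfloor \epsilon N_n\rfloor$, noting $|B_n|\to\infty$. Let $D_n:=\partial^V B_n$ be its vertex boundary, so that $|D_n|\leq d\,|B_n|$, and put $C_n:=(G/N_n)\setminus(B_n\cup D_n)$. By construction every neighbour of a vertex of $B_n$ that lies outside $B_n$ belongs to $D_n$, so there is no edge between $B_n$ and $C_n$; moreover $|C_n|\geq N_n-(d+1)|B_n|\geq N_n/2$, whence $|C_n|\to\infty$ and $D_n\neq\emptyset$. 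I then define
\[
 m_n:=\frac{1}{2|B_n|}\mathbf{1}_{B_n}+\frac{1}{2|C_n|}\mathbf{1}_{C_n},
\]
a probability measure whose support is $B_n\cup C_n\subsetneq G/N_n$ (no full support, since $D_n\neq\emptyset$).

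Finally I would verify the two required properties. Ghostliness is immediate, since $\sup_v m_n(v)=\frac{1}{2|B_n|}\to 0$ as $n\to\infty$. For the failure of the expander condition, take $A=B_n$; then $0<m_n(A)=\tfrac12=\tfrac12\,m_n(G/N_n)$, while $\partial^V B_n=D_n$ is disjoint from $\supp(m_n)=B_n\cup C_n$, so $m_n(\partial^V B_n)=0$ and hence $\ch(G/N_n,E_n,m_n)=0$ for every $n$. In particular $\inf_n \ch(G/N_n,E_n,m_n)=0$, so $\{(G/N_n,m_n)\}_n$ is not a sequence of measured expanders.

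The routine bookkeeping (the degree bound $|D_n|\leq d\,|B_n|$ and the counting yielding $|C_n|\geq N_n/2$) is straightforward. The only genuine point is the structural one: one should \emph{not} attempt to find a balanced sparse cut inside an expander, which is impossible precisely because these graphs are expanders, but instead discard a measure-zero separating layer $D_n$ and thereby disconnect the \emph{support} outright. This costs nothing for the measured Cheeger constant, since $D_n$ carries no mass, and it is exactly the extra freedom afforded by measures without full support that makes the construction possible.
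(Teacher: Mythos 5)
Your construction is correct, but it takes a genuinely different route from the paper's. The paper's proof is not self-contained: it invokes the result of \cite{fraczyk2019heat} that heat kernels (convolution powers of the simple random walk measure) on an infinite, connected, bounded-degree graph form a ghostly, non-expanding sequence of finitely supported measures on the Cayley graph of $G$, and then pushes these measures forward, injectively, to the finite quotients along a suitable subsequence $(N_{l_n})_{n\in\mathbb{N}}$. You instead work directly inside each finite quotient and exploit the degenerate mechanism recorded right after \eqref{EQ:measured cheeger}: the measured Cheeger constant vanishes as soon as the full subgraph on $\supp(m)$ is disconnected. Your bookkeeping is sound: $|D_n|\le d|B_n|$, hence $|C_n|\ge N_n-(d+1)|B_n|\ge N_n/2$, there are no edges between $B_n$ and $C_n$, the set $A=B_n$ is admissible since $m_n(B_n)=\tfrac12=\tfrac12 m_n(G/N_n)$, and ghostliness follows from $|B_n|,|C_n|\to\infty$; so $\ch(G/N_n,E_n,m_n)=0$ for all large $n$ and the claim follows. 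What each approach buys: yours is elementary, citation-free, and produces the measures on every sufficiently large quotient rather than only along a subsequence; the paper's approach yields a subtler counterexample --- measures whose supports need not be disconnected, so that expansion fails for a non-degenerate reason (Cheeger constants positive but decaying) --- and it ties the example to random-walk theory, in the spirit of the rest of the paper. Two minor points you should patch: (i) for the finitely many $n$ with $|G/N_n|<2(d+1)$ your $B_n$ is empty and $m_n$ is ill-defined; since ghostliness and $\inf_n\ch=0$ are purely asymptotic/infimum conditions, you may define $m_n$ arbitrarily there (note the statement is anyway impossible to satisfy literally at $n=0$, where $G/N_0$ is a single point and no probability measure lacks full support; the paper sidesteps this by re-indexing along a subsequence); (ii) your argument that $|G/N_n|\to\infty$, and your use of a Cayley graph of bounded degree, implicitly assume $G$ is infinite and finitely generated, but these assumptions are equally implicit in the statement and in the paper's own proof.
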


\begin{proof}
  This follows from \cite{fraczyk2019heat}: the authors there show that heat kernels --- convolution powers of the probability measure describing a simple random walk --- on infinite, connected, bounded degree graphs are not (measured) expanders. Hence there is a sequence of finitely supported probability measures $\{m_n\}_{n\in\mathbb{N}}$ on $G$ as above, which is ghostly and non-expanding in measure. For each such measure $m_n$ on $G$ there exists $k_n\geq0$ such that for all $k\geq k_n$, the quotient map $G\to G/N_k$ is injective on 1-neighbourhood of $\supp(m_n)$. Thus we can inductively find a subsequence of the subgroups $(N_{l_n})_{n\in\mathbb{N}}$, such that we can endow $G/N_{l_n}$ with the (injective) push-forward of the measure $m_n$. By construction, the resulting sequence will be ghostly, but will not form a sequence of measured expanders.
\end{proof}

However, one can always perturb a sequence as in the above Proposition into a sequence where the measures are full measures:

\begin{prop}\label{prop:perturb-nonex-into-full-measures}
Let $\{(V_{n},E_{n},\mu_{n})\}_{n\in\mathbb{N}}$ be a sequence of ghostly measured graphs, which is \emph{not} a sequence of measured expanders, and such that the probability measures $\mu_{n}$, $n\in\mathbb{N}$, do not have full support. Then there exists a sequence of ghostly probability measures $\{\mu'_{n}\}_{n\in\mathbb{N}}$ on $\{(V_{n},E_{n})\}_{n\in\mathbb{N}}$ with full support, such that $\{(V_{n},E_{n},\mu'_{n})\}_{n\in\mathbb{N}}$ is \emph{not} a sequence of measured expanders.
\end{prop}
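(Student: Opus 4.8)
The plan is to perturb each non-full-support measure $\mu_n$ into a full-support measure $\mu'_n$ by adding a tiny amount of mass on the vertices outside $\supp(\mu_n)$, small enough that the perturbation affects neither the ghostliness nor the non-expansion property. Concretely, I would fix a sequence $\varepsilon_n \to 0$ (with $\varepsilon_n$ decreasing fast enough; I will pin down the exact rate below) and define
\[
\mu'_n := (1-\varepsilon_n)\,\mu_n + \varepsilon_n\, u_n,
\]
where $u_n$ is the normalised counting (uniform) probability measure on $V_n$. Since $u_n$ has full support and $\varepsilon_n>0$, the measure $\mu'_n$ has full support; and it is a convex combination of two probability measures, hence a probability measure.

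The first thing to check is that $\{\mu'_n\}$ remains \emph{ghostly}. For any vertex $v$ we have $\mu'_n(v) = (1-\varepsilon_n)\mu_n(v) + \varepsilon_n/|V_n|$, so
\[
\sup_{v\in V_n}\mu'_n(v) \leq \sup_{v\in V_n}\mu_n(v) + \frac{\varepsilon_n}{|V_n|}.
\]
The first term tends to $0$ by the ghostliness of $\{\mu_n\}$ (recall that ghostliness forces $|\supp(\mu_n)|\to\infty$, but we also need $|V_n|\to\infty$; since $\supp(\mu_n)\subseteq V_n$ this is automatic), and the second term is at most $\varepsilon_n \to 0$, so $\sup_v \mu'_n(v) \to 0$, i.e. $\{\mu'_n\}$ is ghostly.

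The main content is to verify that $\{(V_n,E_n,\mu'_n)\}$ is still \emph{not} a sequence of measured expanders, i.e.\ that $\inf_n \ch(V_n,E_n,\mu'_n) = 0$. Since $\{(V_n,E_n,\mu_n)\}$ is not a sequence of measured expanders, there is a subsequence and sets $A_n \subseteq V_n$ with $0 < \mu_n(A_n) \leq \tfrac12$ and $\mu_n(\partial^V A_n)/\mu_n(A_n) \to 0$. I would use these \emph{same} sets $A_n$ as witnesses for $\mu'_n$. The numerator and denominator of the Cheeger ratio for $\mu'_n$ split as $\mu'_n(S) = (1-\varepsilon_n)\mu_n(S) + \varepsilon_n u_n(S)$ for $S = \partial^V A_n$ and $S = A_n$. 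The ratio therefore becomes
\[
\frac{\mu'_n(\partial^V A_n)}{\mu'_n(A_n)}
= \frac{(1-\varepsilon_n)\mu_n(\partial^V A_n) + \varepsilon_n u_n(\partial^V A_n)}{(1-\varepsilon_n)\mu_n(A_n) + \varepsilon_n u_n(A_n)}.
\]
The first term in the numerator is small relative to $\mu_n(A_n)$ by hypothesis; the danger is the $\varepsilon_n$-terms, since $u_n(\partial^V A_n)$ could be large compared to $\mu_n(A_n)$ if $\mu_n(A_n)$ itself is very small. This is precisely where the choice of rate for $\varepsilon_n$ matters, and it is the main obstacle. The fix is to choose $\varepsilon_n$ \emph{after} fixing the witnessing sets $A_n$: pick $\varepsilon_n \leq \mu_n(A_n)^2$ (or more safely $\varepsilon_n \leq \mu_n(A_n)/|V_n|$ together with $\varepsilon_n\to0$), which guarantees $\varepsilon_n u_n(\partial^V A_n) \leq \varepsilon_n \leq \mu_n(A_n)^2 \ll \mu_n(A_n)$ and keeps the denominator bounded below by $(1-\varepsilon_n)\mu_n(A_n)$. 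With such a choice both the $(1-\varepsilon_n)\mu_n(\partial^V A_n)$ and $\varepsilon_n u_n(\partial^V A_n)$ contributions are $o(\mu_n(A_n))$, so the whole ratio tends to $0$ along the subsequence, giving $\inf_n \ch(V_n,E_n,\mu'_n)=0$. One must double-check that the constraint $\mu'_n(A_n)\leq \tfrac12 \mu'_n(V_n) = \tfrac12$ still holds (it does, since $\mu_n(A_n)\leq\tfrac12$ and $u_n(A_n)\leq 1$ may push above $\tfrac12$ only by a term controlled by $\varepsilon_n$; if necessary replace $A_n$ by its complement, whose boundary is the same), and that on the remaining non-subsequence indices we simply have a ghostly, full-support sequence, which is all that is required. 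The only real subtlety, then, is ordering the quantifiers correctly — selecting the sets $A_n$ first and tailoring $\varepsilon_n$ to them.
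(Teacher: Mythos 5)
Your overall strategy is the same as the paper's: perturb $\mu_n$ by a small amount of uniformly spread mass, with the crucial quantifier ordering of fixing the witness sets $A_n$ (with $\mu_n(\partial^V A_n)/\mu_n(A_n)\to 0$ along a subsequence) \emph{before} choosing the perturbation size (the paper adds total mass $\mu_n(A_n)/n$, spread uniformly over $V_n\setminus\supp(\mu_n)$, and scales $\mu_n$ down accordingly). However, two steps in your execution do not hold up. The smaller one: your primary choice $\varepsilon_n\le\mu_n(A_n)^2$ does not give what you claim, since the inference ``$\mu_n(A_n)^2\ll\mu_n(A_n)$'' is false unless $\mu_n(A_n)\to 0$, which is not granted --- the witnesses may well have $\mu_n(A_n)\equiv\tfrac12$, in which case $\varepsilon_n/\mu_n(A_n)$ is only bounded, not $o(1)$. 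Your parenthetical alternative $\varepsilon_n\le\mu_n(A_n)/|V_n|$ does work (ghostliness forces $|V_n|\to\infty$, so $\varepsilon_n/\mu_n(A_n)\le 1/|V_n|\to 0$); that is the choice you must commit to, and it is the exact analogue of the paper's factor $1/n$.

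The genuine gap is your treatment of the constraint $\mu'_n(A_n)\le\tfrac12$. The Cheeger constant \eqref{EQ:measured cheeger} uses the \emph{vertex} boundary, and $\partial^V(V_n\setminus A_n)$ is \emph{not} the same as $\partial^V A_n$: one is the outer, the other the inner vertex boundary of $A_n$, and their measures can differ wildly. Concretely, take a hub $h$ of tiny measure, leaves $a_1,\dots,a_k$ adjacent only to $h$ carrying total measure $\tfrac12$, and a long path of total measure $\approx\tfrac12$ attached to $h$; for $A=\{a_1,\dots,a_k\}$ one has $\mu(\partial^V A)=\mu(h)$ arbitrarily small, while $\partial^V(V\setminus A)=A$ has measure $\tfrac12$, so the complement is useless as a witness. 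Moreover the problematic case genuinely occurs in your construction: since your $u_n$ is uniform on \emph{all} of $V_n$, one can have $\mu_n(A_n)=\tfrac12$ and $u_n(A_n)>\tfrac12$ (consistently with ghostliness and non-full support), whence $\mu'_n(A_n)>\tfrac12$ for \emph{every} $\varepsilon_n>0$, so the case cannot be avoided by shrinking $\varepsilon_n$. Two repairs are available. The paper's: put the new mass only on $V_n\setminus\supp(\mu_n)$ and note that one may assume $A_n\subseteq\supp(\mu_n)$ (intersecting with the support does not increase the Cheeger ratio, as the discarded boundary points are $\mu_n$-null); then $\mu'_n(A_n)=(1-\varepsilon_n)\mu_n(A_n)\le\tfrac12$ automatically. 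Alternatively, keep your construction and in the bad case trim $A_n$ vertex by vertex until its $\mu'_n$-measure drops to at most $\tfrac12$: by ghostliness the removed set has $\mu'_n$-measure at most $\varepsilon_n/2+\sup_x\mu'_n(x)\to 0$, and it can enlarge the boundary by at most that amount, so the ratio still tends to $0$. As written, without one of these repairs, the proof is incomplete.
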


\begin{proof}
  After taking a subsequence if necessary, we can find $A_n \subseteq V_{n}$ with $0< \mu_n(A_n) \leq \frac{1}{2}$ such that $\frac{\mu_n(\partial A_n)}{\mu_n(A_n)}$ tends to $0$ as $n \to \infty$. We construct a new measure $\mu'_n$ on $V_{n}$ as follows:
\[
	\mu'_n(\{x\})=
	\begin{cases}
	~(1-\frac{\mu_n(A_n)}{n}) \mu_n(\{x\}), & \mbox{if~} x\in \supp(\mu_n);  \\[0.3cm]
	~\frac{\mu_n(A_n)}{n} \cdot \frac{1}{|(G/N_{l_n}) \setminus \supp(\mu_n)|}, & \mbox{if~} x\in V_{n} \setminus \supp(\mu_n).
	\end{cases}
	\] 
Since $0< \mu_n(A_n) \leq \frac{1}{2}$, it is clear that $\mu_n'$ has full support and the sequence $\{(V_{n}, \mu'_n)\}_n$ is ghostly. Finally, we observe:
\[
\frac{\mu_n'(\partial A_n)}{\mu'_n(A_n)}  \leq \frac{\mu_n(\partial A_n) + \frac{\mu_n(A_n)}{n}}{(1-\frac{\mu_n(A_n)}{n}) \mu_n(A_n)} \leq \frac{2\mu_n(\partial A_n)}{\mu_n(A_n)} + \frac{1}{n-\mu_n(A_n)} \to 0
\]
as $n\to \infty$. Hence we obtain that $\{(V_{n}, \mu'_n)\}_n$ does not form a sequence of measured expanders, which concludes the proof.
\end{proof}

As a direct corollary (by taking the group $G$ to have property (T) in Proposition \ref{prop:example 1}), we obtain the following:
\begin{cor}\label{exm1}
There exist a sequence of expanders $\{(V_n,E_n)\}_{n\in \N}$ and measures $m_n$ on $V_n$ with full support such that $\{(V_n, E_n, m_n)\}_{n\in \N}$ is ghostly but does not form a sequence of measured expanders.
\end{cor}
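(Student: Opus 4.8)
The plan is to combine the two preceding propositions with the classical fact, recalled in the introduction, that box spaces of groups with Kazhdan's property (T) are expanders. First I would fix a residually finite group $G$ with property (T) together with a finite symmetric generating set $S$ and a nested sequence $G = N_0 \rhd N_1 \rhd N_2 \rhd \cdots$ of finite-index normal subgroups with trivial intersection; a concrete choice is $G=\mathrm{SL}_3(\mathbb{Z})$ with $S$ the elementary generators and the $N_k$ the principal congruence subgroups. For the image of $S$ in each quotient, the Cayley graphs $(V_n,E_n)$ of $G/N_n$ form a sequence of expanders by Margulis's construction \cite{MR0484767}, since property (T) forces a uniform spectral gap across the quotients.

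Next I would apply Proposition \ref{prop:example 1} to this group $G$, obtaining probability measures $m_n$ on the quotients $G/N_{l_n}$ (for a suitable subsequence $(N_{l_n})_{n}$ produced in that proof) which lack full support, yet for which the resulting sequence of measured graphs is ghostly and fails to be a sequence of measured expanders. The only bookkeeping point here is that restricting to the subsequence indexed by $(l_n)_n$ is harmless: every subsequence of an expander sequence is again a sequence of expanders, so the graphs $(V_{l_n},E_{l_n})$ remain expanders and still have uniformly bounded valency.

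Finally I would feed the sequence $\{(V_{l_n},E_{l_n},m_n)\}_n$ from the previous step into Proposition \ref{prop:perturb-nonex-into-full-measures}. This replaces each $m_n$ by a ghostly probability measure of full support while preserving the failure of measured expansion. After relabelling the indices, the resulting data $\{(V_n,E_n,m_n)\}_n$ has expander underlying graphs, full-support measures, is ghostly, and is not a sequence of measured expanders -- exactly as required. Since all three ingredients are already established, I do not expect any genuine obstacle; the only care needed is the subsequence bookkeeping above and keeping the two roles of the symbol $m_n$ (before and after the perturbation) straight.
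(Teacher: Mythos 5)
Your proposal is correct and follows essentially the same route as the paper, which obtains Corollary \ref{exm1} precisely by taking $G$ to have property (T) in Proposition \ref{prop:example 1} (so the underlying box-space graphs are expanders by Margulis's construction) and then invoking Proposition \ref{prop:perturb-nonex-into-full-measures} to upgrade the non-full-support ghostly measures to full-support ones while preserving ghostliness and the failure of measured expansion. Your extra remark on subsequence bookkeeping (subsequences of expander sequences remain expanders) is sound and fills in the only detail the paper leaves implicit.
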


Proposition \ref{prop:example 1} shows that property (T) cannot ensure that box spaces with arbitrary chosen measure form sequences of measured expanders. Hence it is natural to ask:

\begin{question}
Can we define a notion of \emph{measured property (T)} for groups to ensure that box spaces with any ghostly measures form sequences of measured expanders?
\end{question}

On the other hand, recall that there is a weaker condition for groups called property $(\tau)$ introduced in \cite{lubotzky2005property} which also ensures that quotients by a fixed sequence of finite index normal subgroups with trivial intersection form a sequence of expanders. Hence one may also explore a measured version of property $(\tau)$ to construct explicit examples of measured expanders.

The next example concerns a permanence property of measured expanders. First note the following elementary observation, whose proof is straightforward, hence omitted.

\begin{lem}
Let $\{(V_n,E_n,m_n)\}_n$ be a sequence of finite probability measured graphs, and $m'_n$ be another sequence of probability measures on $V_n$. Assume that there exists a constant $s\geq 1$ such that for each $n\in \N$ and $x\in V_n$, we have $\frac{1}{s}m_n(x) \leq m'_n(x) \leq sm_n(x)$. Then $\{(V_n,E_n,m_n)\}_n$ is a sequence of measured expanders \emph{if and only if} $\{(V_n,E_n,m'_n)\}_n$ is a sequence of measured expanders.
\end{lem}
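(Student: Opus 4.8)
The plan is to prove both directions by comparing the two Cheeger quotients directly, subset by subset, exploiting the fact that the measures $m_n$ and $m'_n$ are uniformly comparable via the constant $s$. By symmetry of the hypothesis (replacing $m_n$ by $m'_n$ swaps the roles of the two sequences and the same constant $s$ works), it suffices to prove just one implication, say that if $\{(V_n,E_n,m_n)\}_n$ is a sequence of measured expanders then so is $\{(V_n,E_n,m'_n)\}_n$.

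First I would fix $n$ and a subset $A \subseteq V_n$. For any such $A$ and any subset $B \subseteq V_n$, the comparison $\frac{1}{s}m_n(B) \leq m'_n(B) \leq s\, m_n(B)$ follows immediately by summing over points of $B$. Applying this to $B = \partial^V A$ in the numerator and to $B = A$ in the denominator of the measured Cheeger quotient for $m'_n$ gives
\[
\frac{m'_n(\partial^V A)}{m'_n(A)} \geq \frac{\frac{1}{s}m_n(\partial^V A)}{s\, m_n(A)} = \frac{1}{s^2}\cdot \frac{m_n(\partial^V A)}{m_n(A)}.
\]
The one subtlety is that the infimum defining $\ch(V_n,E_n,m'_n)$ ranges over $A$ with $0 < m'_n(A) \leq \frac{1}{2}m'_n(V_n)$, whereas the expander hypothesis on $m_n$ controls quotients only over $A$ with $0 < m_n(A) \leq \frac{1}{2}m_n(V_n)$, and these two constraint regions need not coincide.

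This mismatch of the ``half the total measure'' constraint is the main obstacle, and I would handle it as follows. The constraint $m'_n(A) \leq \frac{1}{2}m'_n(V_n)$ does not force $m_n(A) \leq \frac{1}{2}m_n(V_n)$; however, if $A$ violates the latter, one passes to its complement. Since $V_n = A \sqcup (V_n \setminus A)$, at least one of $A$ and $V_n \setminus A$ has $m_n$-measure at most half of $m_n(V_n)$. Because $\partial^V(V_n \setminus A) $ and $\partial^V A$ are both controlled by $\partial^E A$ (in a bounded-valency graph one has $m_n(\partial^V(V_n\setminus A))$ and $m_n(\partial^V A)$ of comparable size up to the valency bound), one can replace $A$ by its complement while changing the Cheeger quotient only by a bounded multiplicative factor depending on $s$ and the valency bound $K$. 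Thus for every $A$ admissible for $m'_n$ there is an associated set (either $A$ or its complement) admissible for $m_n$ whose $m_n$-Cheeger quotient is, up to a factor depending only on $s$ and $K$, a lower bound for $\frac{m'_n(\partial^V A)}{m'_n(A)}$.

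Combining these estimates, if $\inf_n \ch(V_n,E_n,m_n) = c_0 > 0$ then $\inf_n \ch(V_n,E_n,m'_n) \geq c_0 / (s^2 C_K) > 0$ for a constant $C_K$ absorbing the complement-swap loss, so $\{(V_n,E_n,m'_n)\}_n$ is a sequence of measured expanders. Since the hypothesis is symmetric in $m_n$ and $m'_n$, the reverse implication is identical, which establishes the ``if and only if''. I would note that the uniform valency bound, which is implicit in the expander setting, is exactly what keeps the complement-swap constant $C_K$ uniform in $n$; without it the bounded-valency input used to compare $m_n(\partial^V A)$ and $m_n(\partial^V(V_n \setminus A))$ would fail, and this is the one place where care is genuinely needed rather than a routine inequality.
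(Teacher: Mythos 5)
Your symmetry reduction and your treatment of sets $A$ with $m_n(A)\leq\frac12$ are fine (the paper itself omits the proof as straightforward), but your complement-swap step contains a genuine gap. You claim that, up to a factor depending on a valency bound $K$ (and $s$), the measures $m_n(\partial^V A)$ and $m_n(\partial^V(V_n\setminus A))$ are comparable. That is true for the \emph{counting} measure, where bounded valency bounds cardinalities, but it is false for a general measure $m_n$: the outer boundary $\partial^V A$ and the inner boundary $\partial^V(V_n\setminus A)$ are disjoint sets of vertices, and nothing in the hypotheses controls the ratio of measures of adjacent vertices --- that would be the \emph{bounded measure ratio} condition of Definition \ref{defn:bounded-measure-ratio}, which is not assumed in this lemma. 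Concretely, on the path $v_1\thicksim v_2\thicksim v_3$ with $A=\{v_1,v_2\}$ and $m(v_1)=\frac12$, $m(v_2)=\frac12-\epsilon$, $m(v_3)=\epsilon$, one has $m(\partial^V(V\setminus A))=\frac12-\epsilon$ while $m(\partial^V A)=\epsilon$, so no constant $C_K$ works as $\epsilon\to 0$. Relatedly, the lemma assumes no valency bound at all, and Definition \ref{defn:measured expanders} of measured expanders requires only $\inf_n \ch(V_n,E_n,m_n)>0$; so a valency bound is \emph{not} ``implicit in the expander setting'' here, and your constant $C_K$ has nothing to refer to.

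The step can be repaired without any valency or measure-ratio input, keeping your complement idea but applying the expander hypothesis to a slightly smaller set. Suppose $m'_n(A)\leq\frac12$ but $m_n(A)>\frac12$; put $B=V_n\setminus A$ and $C=B\setminus\partial^V A$ (recall $\partial^V A\subseteq B$). Then $\partial^V C\subseteq\partial^V A$: a vertex adjacent to $C$ cannot lie in $A$, since a vertex of $C$ adjacent to $A$ would belong to $\partial^V A$, contradicting $C\cap\partial^V A=\emptyset$. Since $m_n(C)\leq m_n(B)<\frac12$, applying the $m_n$-expander bound $c_0$ to $C$ (when $m_n(C)>0$) gives $m_n(\partial^V A)\geq m_n(\partial^V C)\geq c_0\,m_n(C)=c_0\bigl(m_n(B)-m_n(\partial^V A)\bigr)$, hence $m_n(\partial^V A)\geq\frac{c_0}{1+c_0}m_n(B)$; the same bound holds trivially when $m_n(C)=0$, because then $m_n(\partial^V A)=m_n(B)$. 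Since $m'_n(B)\geq\frac12$ forces $m_n(B)\geq\frac1{2s}$, we get $m'_n(\partial^V A)\geq\frac1s m_n(\partial^V A)\geq\frac{c_0}{2s^2(1+c_0)}\geq\frac{c_0}{s^2(1+c_0)}\,m'_n(A)$. Combined with your direct estimate in the case $m_n(A)\leq\frac12$, this yields the uniform lower bound $\ch(V_n,E_n,m'_n)\geq\frac{c_0}{s^2(1+c_0)}>0$, completing the argument.
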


However, without the assumption on measure ratio, such a conclusion is false. The following result was communicated to us by G\'abor Elek. Note that the notion of Cheeger constant can be defined for an arbitrary (not necessarily finite) measured graph $(V,E,m)$ in exactly the same way as (\ref{EQ:measured cheeger}) provided that $m(V)< \infty$.


\def\gr{\mathcal{G}}

\begin{lem}\label{lem:example 2}
Let $\gr=(V,E,\mu)$ be a measured graph with $\mu(V)=1$ and Cheeger constant $c=ch(V,E,\mu)>0$.
  Let $\gr_\infty = (V_\infty,E_\infty) = \gr\times \mathbb{N}_0$ be the Cartesian product of graphs, where we consider $\mathbb{N}_0$ to be a graph with edges between consecutive numbers.  Endow $\gr_\infty$ with the measure $\mu_\infty$, defined by declaring that $\mu_\infty|_{V\times\{i\}} = 2^{-i}\mu$ for $i\in\mathbb{N}_0$. Then the Cheeger constant of $\gr_\infty=(V_\infty, E_\infty, \mu_\infty)$ is at least $\min(\frac{c}{18},\frac{1}{8})$.
\end{lem}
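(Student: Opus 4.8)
The plan is to bound from below, for an arbitrary $A\subseteq V_\infty$ with $0<\mu_\infty(A)\le\frac12\mu_\infty(V_\infty)=1$, the ratio $\mu_\infty(\partial^V A)/\mu_\infty(A)$. Write $A_i:=\{v\in V:(v,i)\in A\}$ for the $i$-th horizontal slice and $a_i:=\mu(A_i)$, so that $m:=\mu_\infty(A)=\sum_i 2^{-i}a_i$. First I would split the boundary into its horizontal part (inside slices) and its vertical part (between consecutive slices): since $(\partial^V A)\cap(V\times\{i\})=\partial^V_{\mathcal G}A_i\cup\big((A_{i-1}\cup A_{i+1})\setminus A_i\big)$, one gets $\mu_\infty(\partial^V A)\ge\max(H,\mathrm{Vert})$, where $H:=\sum_i 2^{-i}\mu(\partial^V_{\mathcal G}A_i)$ and $\mathrm{Vert}:=\sum_i 2^{-i}\mu\big((A_{i-1}\cup A_{i+1})\setminus A_i\big)$. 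It then suffices to show that for every $A$ at least one of $H,\mathrm{Vert}$ is at least $\min(\tfrac c{18},\tfrac18)\,m$.

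The naive Cheeger bound $\mu(\partial^V_{\mathcal G}A_i)\ge c\,a_i$ only applies when $a_i\le\frac12$, and this is the source of the difficulty. The crux of the argument is therefore a \emph{two-sided} Cheeger estimate valid for slices of arbitrary mass: for any $S\subseteq V$,
\[
\mu(\partial^V_{\mathcal G}S)\ \ge\ \frac{c}{1+c}\,\min\big(\mu(S),\,\mu(V\setminus S)\big).
\]
When $\mu(S)>\frac12$ I would prove this by applying the Cheeger constant to $R':=(V\setminus S)\setminus\partial^V_{\mathcal G}S$, which satisfies $\mu(R')\le\frac12$ and $\partial^V_{\mathcal G}R'\subseteq\partial^V_{\mathcal G}S$ (a vertex touching $R'$ lies in $V\setminus S$ by definition of $R'$); this gives $\mu(\partial^V_{\mathcal G}S)\ge c\,\mu(R')=c\big(\mu(V\setminus S)-\mu(\partial^V_{\mathcal G}S)\big)$, and rearranging yields the claim. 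Consequently $H\ge\frac{c}{1+c}\sum_i 2^{-i}\min(a_i,1-a_i)$, which now detects ``almost full'' slices through their small complement.

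For the vertical term I would use the expansion of the weighted half-line $(\mathbb N_0,\nu)$ with $\nu(i)=2^{-i}$: an elementary computation shows its vertex-Cheeger constant is $\ge\frac12$, i.e. $\nu(\partial_{\mathbb N_0}T)\ge\frac12\min(\nu(T),\nu(\mathbb N_0\setminus T))$ for every $T\subseteq\mathbb N_0$. Applied columnwise to $S_v:=\{i:(v,i)\in A\}$ and summed against $\mu$ (the column contributions being disjoint and genuinely boundary), this gives $\mathrm{Vert}=\sum_{v}\mu(v)\,\nu(\partial_{\mathbb N_0}S_v)\ge\frac12\sum_{v}\mu(v)\min(\nu(S_v),2-\nu(S_v))$. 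With these two tools I would run a case analysis driven by the mass constraint $m\le1$: if the mass carried by ``light'' columns ($\nu(S_v)\le1$), equivalently by slices with $a_i\le\frac12$, is at least $\frac12 m$, then the vertical bound (or the one-sided horizontal bound) already gives a ratio $\gtrsim\frac14$ or $\gtrsim c$; otherwise the mass concentrates in nearly-full columns over the low levels, where a block of consecutive big slices is either barely over half (so $1-a_i\approx\frac12$ and $H$ is large via the two-sided lemma) or genuinely full, and then $m\le1$ forbids such a block starting at level $0$ and being long, so the heaviest vertical transition just below it covers the block's mass. Balancing these mechanisms is where the explicit constants $\frac c{18}$ and $\frac18$ emerge.

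The main obstacle is exactly the regime of nearly-full slices and columns, in which neither the classical one-sided Cheeger inequality nor the half-line expansion is effective by itself. The decisive ingredient is the two-sided estimate of the second paragraph, which turns ``more than half of a slice'' into usable horizontal boundary via that slice's complement; once this is in hand, the combinatorics of where big blocks may sit, constrained by $m\le1$, closes the argument.
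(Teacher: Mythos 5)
Your preparatory machinery is correct, and it is a genuinely different route from the paper's: your two-sided Cheeger estimate $\mu(\partial^V_{\mathcal{G}}S)\geq\frac{c}{1+c}\min\big(\mu(S),\mu(V\setminus S)\big)$ is valid as you prove it (indeed $\partial^V_{\mathcal{G}}R'\subseteq\partial^V_{\mathcal{G}}S$ and $\mu(R')=\mu(V\setminus S)-\mu(\partial^V_{\mathcal{G}}S)$), the half-line bound $\nu(\partial_{\mathbb{N}_0}T)\geq\frac12\min\big(\nu(T),\nu(\mathbb{N}_0\setminus T)\big)$ is also true (look at the smallest element of $T$, resp.\ of its complement), and so is the horizontal/vertical splitting $\mu_\infty(\partial^V A)\geq\max(H,\mathrm{Vert})$. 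The paper proceeds quite differently: it takes the first level $k$ with $\mu_k(A_k)>\mu_k(V_k)/2$ and runs a four-case analysis around levels $k-1$, $k$ and $0$, using only the one-sided Cheeger inequality in the slices, vertical ``copies'' of slices, and one two-sided input at level $0$ quoted from \cite{measured_I}.

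The genuine gap is your final ``balancing'' paragraph, which is exactly where the work lies and which is not a proof as written. First, the claimed equivalence between ``mass in light columns ($\nu(S_v)\leq 1$)'' and ``mass in small slices ($a_i\leq\frac12$)'' is false: columns are indexed by $V$ and slices by $\mathbb{N}_0$, and these are unrelated decompositions of $m$ --- for $A=W\times\mathbb{N}_0$ with $\mu(W)=\frac12$ every column is heavy while every slice is small. Second, the announced dichotomy in the remaining case (``big slices are either barely over half or genuinely full'') omits intermediate densities such as $a_i=\frac34$, and the constants are never derived. The route can in fact be completed: setting $P:=\sum_i 2^{-i}\min(a_i,1-a_i)$ and $Q:=\sum_v\mu(v)\min\big(\nu(S_v),2-\nu(S_v)\big)$, one checks that $P<\frac{m}{7}$ and $Q<\frac{m}{7}$ are jointly impossible when $m\leq1$: small $Q$ forces the heavy-column set $W$ to satisfy $\mu(W)<\frac{4m}{7}$ while $A\cap(W\times I)$ carries mass $>\frac{5m}{7}$ (with $I$ the big-slice set), whence $\nu(I)>\frac54$; but small $P$ together with $m\leq 1$ gives $\nu(I)<1+\frac17=\frac87<\frac54$. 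This yields $\mu_\infty(\partial^V A)\geq\min\big(\frac{c}{7(1+c)},\frac1{14}\big)\,m$, which is a perfectly good uniform bound (and suffices for the application in the paper), but for large $c$ it is strictly weaker than the stated $\min\big(\frac{c}{18},\frac18\big)$; so even after this repair your route proves a variant of the Lemma rather than the Lemma with its stated constant, and you would need further optimization of the thresholds to recover it.
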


\begin{proof}
Consider a non-empty subset $A\subset V_\infty$ with $\mu_\infty(A)\leq 1 = 	\mu_\infty(V_\infty)/2$. For $i\in\mathbb{N}_0$, denote $\gr_i=\gr \times \{i\}$, $V_{i} = V\times\{i\}$, $A_i = A\cap V_i$, $A_{[i,\infty)} = \sqcup_{j=i}^\infty A_j$ and $\mu_i:=\mu_\infty|_{V\times\{i\}}$.
  
  If $\mu_i(A_i)\leq \mu_i(V_i)/2$ for all $i\in\mathbb{N}_0$, then, just using the expansion assumption in each $\gr_i$ (which is a scaled copy of $G$) separately, we obtain $\mu_\infty(\partial A)\geq c\mu_\infty(A)$.
  
  If the above is not the case, then let $k\in\mathbb{N}_0$ be the smallest number for which $\mu_k(A_k)> \mu_k(V_k)/2$. Observe that the slices of $A$ after index $k$ can't be too big: \[
  \mu_\infty\left(A_{[k+1,\infty)}\right) \leq \mu_\infty\left(V_{[k+1,\infty)}\right) = \sum\nolimits_{i=k+1}^\infty 2^{-i} = 2^{-k} = \mu_k(V_k)\leq 2\mu_k(A_k).
  \]

  Case 1: $k>0$ and $\mu_{k-1}(A_{k-1})< \mu_{k-1}(V_{k-1})/4$. Thus also $\mu_{k-1}(A_{k-1})< \mu_k(V_k)/2 < \mu_k(A_k)$, and hence
  \[\mu_\infty\left(A_{[k-1,\infty)}\right) = \mu_{k-1}(A_{k-1})+\mu_k(A_k)+\mu_\infty\left(A_{[k+1,\infty)}\right)\leq 4\mu_k(A_k).
  \]
  In this case we examine the boundary of $A$ within the slice $\gr_{k-1}$. It contains at least ``a copy'' of $A_k$ (in $\gr_{k-1}$), but if they lie within $A_{k-1}$, they are not boundary points. Hence we obtain:
  \begin{equation*}
  \mu_\infty(\partial(A_{[k-1,\infty)})\cap V_{k-1}) \geq 2\mu_k(A_k) - \mu_{k-1}(A_{k-1})> \mu_k(A_k).
  \end{equation*}
  We split $A = \left(\sqcup_{i=0}^{k-2}A_i\right) \sqcup A_{[k-1,\infty)}$. The first part (if not empty) expands at least by a factor of $c$ (in each slice separately). For the other part, combining the above inequalities, we obtain
  \[
  \mu_\infty\left(\partial(A_{[k-1,\infty)})\cap V_{[k-1,\infty)}\right)
  \geq \mu_k(A_k) \geq \mu_\infty\left(A_{[k-1,\infty)}\right)/4.
  \]
  Hence this part expands at least by a factor of $\frac{1}{4}$, which finishes Case 1.
  
  Case 2: $k>0$ and $\mu_{k-1}(A_{k-1})\geq \mu_{k-1}(V_{k-1})/4$. Here we can simply include $A_k$ into the tail since
  $\mu_\infty\left(A_{[k,\infty)}\right) \leq 2^{-k+1} =\mu_{k-1}(V_{k-1}) \leq 4 \mu_{k-1}(A_{k-1})$, and thus $\mu_\infty\left(A_{[k-1,\infty)}\right) \leq 5\mu_{k-1}(A_{k-1})$. Decomposing $A$ as in Case 1, the first part again expands at least by a factor of $c$ in each slice; and for the second part we estimate
  \begin{align*}
  \mu_\infty\left(\partial(A_{[k-1,\infty)})\cap V_{[k-1,\infty)}\right)
  &\geq \mu_{k-1}\left(\partial(A_{k-1})\cap V_{k-1}\right)
  \geq c\mu_{k-1}\left(A_{k-1}\right)\\
  &\geq \frac{c}{5}\mu_\infty\left(A_{[k-1,\infty)}\right).
  \end{align*}
  This finishes Case 2.
  
  Case 3: $k=0$ and $\mu_0(A_0)\geq \frac{3}{4}$. Consequently $\mu_1(A_1)\leq \mu_\infty\left(A_{[1,\infty)}\right)\leq\frac{1}{4}\leq\mu_0(A_0)/3$, and $\mu_\infty(A)\leq 1\leq \frac{4}{3}\mu_0(A_0)$.
  Thus using just the $\gr_1$-part of the boundary of $A$, we have
  \[
  \mu_\infty(\partial(A))\geq \mu_1(\partial(A)\cap V_1) \geq \mu_0(A_0)/2-\mu_1(A_1)\geq \mu_0(A_0)/6 \geq \mu_\infty(A)/8.
  \]
  
  Case 4: $k=0$ and $\mu_0(A_0)<\frac{3}{4}$. Using \cite[Lemma 4.12]{measured_I} and \cite[Lemma 6.3]{dypartI} we have that $\mu_0(\partial(A_0)\cap V_0) \geq \frac{c}{6}\mu_0(A_0)$. Recall from the beginning of the proof that $\mu_\infty\left(A_{[1,\infty)}\right) \leq 2\mu_0(A_0)$. Hence combining them together, we obtain
  \[
  \mu_\infty(\partial(A)) \geq \mu_0(\partial(A_0)\cap V_0)
  \geq \tfrac{c}{6}\mu_0(A_0)
  \geq \tfrac{c}{18}\mu_\infty(A).
  \]
  This finishes Case 4, and the whole proof.
\end{proof}

\begin{ex}
Let $\{\mathcal{H}_n=(V_n,E_n,m_n)\}_{n\in\mathbb{N}}$ be a sequence of (classical) expanders, endowed with the normalised counting measures. For every $n\in\mathbb{N}$ define $\mathcal{K}_n = \mathcal{H}_n \times \{0,1,2,\ldots,n\}$ where we consider $\{0,1,2,\ldots,n\}$ as a subgraph in $\mathbb{N}_0$. By a standard argument, $\{\mathcal{K}_n\}_{n\in\mathbb{N}}$ is \emph{not} a classical expander sequence (when no measures are considered). However, we can endow each $\mathcal{K}_{n}$ with the restriction of the measure $(m_{n})_{\infty}$ described in Lemma \ref{lem:example 2}. Using the conclusion of the Lemma and an approximation argument, $\{\mathcal{K}_n\}_{n\in\mathbb{N}}$ is a sequence of measured expanders.
\end{ex}

  In conclusion, we obtain the following:
  
  \begin{prop}\label{exm2}
  There exists a sequence of ghostly measured expanders $\{(V_n, E_n, m_n)\}_{n\in \N}$ with uniformly bounded valency such that the underlying graphs $\{(V_n, E_n)\}_{n\in \N}$ are not expanders.
  \end{prop}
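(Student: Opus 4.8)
The plan is to realise the required sequence as the one built in the Example preceding the statement, namely $\mathcal{K}_n=\mathcal{H}_n\times\{0,1,\ldots,n\}$, and to verify in turn the four demanded properties: ghostliness, uniformly bounded valency, being a sequence of measured expanders, and failure of the underlying graphs to be expanders. At the outset I would fix a sequence of classical expanders $\mathcal{H}_n=(V_n,E_n)$ (Definition~\ref{defn:expanders}) with valency bounded by some $K_0$, with $|V_n|\to\infty$, and with $\mathrm{ch}(V_n,E_n)\geq c>0$; I equip each $V_n$ with the normalised counting measure. On $\mathcal{K}_n$ I place the truncation of the measure $(m_n)_\infty$ from Lemma~\ref{lem:example 2}, so that the slice $V_n\times\{i\}$ carries total weight $2^{-i}$ and the total mass is $2-2^{-n}$.

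First I would dispose of the three soft properties. For uniformly bounded valency, a vertex $(v,i)$ has valency $\mathrm{val}_{\mathcal{H}_n}(v)$ augmented by the one or two path-neighbours of $i$ in $\{0,\ldots,n\}$, hence at most $K_0+2$. For ghostliness, the heaviest single vertex sits in slice $0$ and has weight $1/|V_n|$, so $\sup_v (m_n)_\infty(v)/(m_n)_\infty(\mathcal{K}_n)\leq 1/|V_n|\to 0$ as $|V_n|\to\infty$. To see that the underlying graphs fail to be expanders, I would exhibit the half-tube $A=V_n\times\{0,\ldots,\lfloor n/2\rfloor\}$: its unweighted vertex boundary is the single slice $V_n\times\{\lfloor n/2\rfloor+1\}$, so $|\partial^V A|/|A|\approx 2/(n+2)\to 0$, forcing the unweighted Cheeger constants to $0$; this is the ``standard argument'' alluded to in the Example.

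The substantive point is that $\{\mathcal{K}_n\}$ \emph{is} a sequence of measured expanders, i.e.\ $\inf_n\mathrm{ch}(\mathcal{K}_n,(m_n)_\infty)>0$. Here I would apply Lemma~\ref{lem:example 2} with $\mathcal{G}=\mathcal{H}_n$: since $\mathrm{ch}(\mathcal{H}_n)\geq c$ uniformly, the infinite product $\mathcal{H}_n\times\mathbb{N}_0$ has Cheeger constant at least $\min(c/18,1/8)$, a bound independent of $n$. The main obstacle is the passage from this infinite graph to the finite truncation $\mathcal{K}_n$ — the \emph{approximation argument}. Concretely, for $A\subseteq\mathcal{K}_n$ the vertex boundary computed in $\mathcal{H}_n\times\mathbb{N}_0$ differs from the one computed in $\mathcal{K}_n$ only by a copy, in the absent slice $n+1$, of the part of $A$ lying in the top slice $V_n\times\{n\}$; so naively quoting the infinite bound leaves an error term of weight $\tfrac12\,(m_n)_\infty\bigl(A\cap(V_n\times\{n\})\bigr)$. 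I expect to resolve this by inspecting the proof of Lemma~\ref{lem:example 2} rather than quoting only its conclusion: every boundary estimate there is produced either \emph{within} a slice or by pushing mass \emph{downward} into the heavier slice $k-1$, the sole upward step (Case~3) running from slice $0$ to slice $1$, which is present as soon as $n\geq1$. Hence all boundary that the proof counts already lives in slices $\{0,\ldots,n\}$, the error term never arises, and the argument applies verbatim to $\mathcal{K}_n$, giving $\mathrm{ch}(\mathcal{K}_n,(m_n)_\infty)\geq\min(c/18,1/8)$ for all $n\geq1$ (with $n=0$ being $\mathcal{H}_0$ itself, of Cheeger constant $\geq c$). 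Combining this uniform lower bound with the three properties above yields the proposition.
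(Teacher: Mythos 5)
Your proposal is correct and takes essentially the same route as the paper: the paper's proof is exactly the Example preceding the Proposition --- classical expanders $\mathcal{H}_n$ crossed with the paths $\{0,1,\dots,n\}$, endowed with the truncation of the measure $(m_n)_\infty$ from Lemma \ref{lem:example 2}, the underlying graphs failing expansion via the half-tube argument. Your inspection of the Lemma's proof (all boundary counted there lies either within a slice, or is pushed down into slice $k-1$, or --- only in Case 3 --- pushed up from slice $0$ to slice $1$, so it survives truncation to $\mathcal{K}_n$) is precisely the ``approximation argument'' the paper leaves implicit, and you carry it out correctly.
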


\bibliographystyle{plain}
\bibliography{expanderish_graph_measured}

\begin{thebibliography}{10}

\bibitem{alon1986eigenvalues}
Noga Alon.
\newblock Eigenvalues and expanders.
\newblock {\em Combinatorica}, 6(2):83--96, 1986.

\bibitem{alon1985lambda1}
Noga Alon and Vitali~D. Milman.
\newblock {$\lambda_1,$} isoperimetric inequalities for graphs, and
  superconcentrators.
\newblock {\em J. Combin. Theory Ser. B}, 38(1):73--88, 1985.

\bibitem{BdlHV:2008}
Bachir Bekka, Pierre de~la Harpe, and Alain Valette.
\newblock {\em Kazhdan's property ({T})}, volume~11 of {\em New Mathematical
  Monographs}.
\newblock Cambridge University Press, Cambridge, 2008.

\bibitem{davidoff2003elementary}
Giuliana Davidoff, Peter Sarnak, and Alain Valette.
\newblock {\em Elementary number theory, group theory, and {R}amanujan graphs},
  volume~55 of {\em London Mathematical Society Student Texts}.
\newblock Cambridge University Press, Cambridge, 2003.

\bibitem{de2017banach}
Tim de~Laat and Mikael de~la Salle.
\newblock Banach space actions and {$L^2$}-spectral gap.
\newblock {\em Anal. PDE}, 14(1):45--76, 2021.

\bibitem{dodziuk1984difference}
Jozef Dodziuk.
\newblock Difference equations, isoperimetric inequality and transience of
  certain random walks.
\newblock {\em Trans. Amer. Math. Soc.}, 284(2):787--794, 1984.

\bibitem{fraczyk2019heat}
Mikolaj Fraczyk and Wouter van Limbeek.
\newblock Heat kernels are not uniform expanders.
\newblock {\em arXiv:1905.13584}, 2019.

\bibitem{structure}
Ana Khukhro, Kang Li, Federico Vigolo, and Jiawen Zhang.
\newblock On the structure of asymptotic expanders.
\newblock {\em arXiv:1910.13320v3, to appear in Advances in Mathematics}, 2019.

\bibitem{Intro}
Kang Li, Piotr Nowak, J\'{a}n \v{S}pakula, and Jiawen Zhang.
\newblock Quasi-local algebras and asymptotic expanders.
\newblock {\em Groups Geom. Dyn.}, 15(2):655--682, 2021.

\bibitem{measured_I}
Kang Li, J{\'a}n {\v{S}}pakula, and Jiawen Zhang.
\newblock Measured asymptotic expanders and rigidity for {R}oe algebras.
\newblock {\em arXiv preprint arXiv:2010.10749}, 2020.

\bibitem{dypartI}
Kang Li, Federico Vigolo, and Jiawen Zhang.
\newblock {Asymptotic expansion in measure and strong ergodicity}.
\newblock {\em arXiv:2005.05697, to appear in Journal of Topology and
  Analysis}, 2020.

\bibitem{lubotzky2005property}
Alex Lubotzky and Andrzej Zuk.
\newblock On property ($\tau$).
\newblock {\em Notices Amer. Math. Soc}, 52(6):626--627, 2005.

\bibitem{MR0484767}
G.~A. Margulis.
\newblock Explicit constructions of expanders.
\newblock {\em Problemy Pereda\v{c}i Informacii}, 9(4):71--80, 1973.

\bibitem{Mat97}
Ji\v{r}\'{\i} Matou\v{s}ek.
\newblock On embedding expanders into {$l_p$} spaces.
\newblock {\em Israel J. Math.}, 102:189--197, 1997.

\bibitem{schoenberg1938metric}
Isaac~J. Schoenberg.
\newblock Metric spaces and positive definite functions.
\newblock {\em Trans. Amer. Math. Soc.}, 44(3):522--536, 1938.

\bibitem{tanner1984explicit}
Robert~Michael Tanner.
\newblock Explicit concentrators from generalized {$N$}-gons.
\newblock {\em SIAM J. Algebraic Discrete Methods}, 5(3):287--293, 1984.

\bibitem{MR2649350}
Romain Tessera.
\newblock Coarse embeddings into a {H}ilbert space, {H}aagerup property and
  {P}oincar\'{e} inequalities.
\newblock {\em J. Topol. Anal.}, 1(1):87--100, 2009.

\bibitem{wells2012embeddings}
James~H. Wells and Lynn~R. Williams.
\newblock {\em Embeddings and extensions in analysis}.
\newblock Springer-Verlag, New York-Heidelberg, 1975.
\newblock Ergebnisse der Mathematik und ihrer Grenzgebiete, Band 84.

\bibitem{woess2000random}
Wolfgang Woess.
\newblock {\em Random walks on infinite graphs and groups}, volume 138 of {\em
  Cambridge Tracts in Mathematics}.
\newblock Cambridge University Press, Cambridge, 2000.

\end{thebibliography}

\end{document}